\documentclass[a4paper]{amsart} 

\usepackage{amsmath,amsthm,amssymb,amsfonts,mathrsfs,color,hyperref, mathtools,crop, graphicx, todonotes, enumitem}
\usepackage[]{geometry}

\theoremstyle{plain}
\begingroup
\newtheorem{theorem}{Theorem}[section]
\newtheorem*{theorem*}{Theorem}
\newtheorem*{"theorem"}{``Theorem''}
\newtheorem{corollary}[theorem]{Corollary}

\newtheorem{lemma}[theorem]{Lemma}
\endgroup

\theoremstyle{definition}
\begingroup
\newtheorem{definition}[theorem]{Definition}
\endgroup

\theoremstyle{remark}
\begingroup
\newtheorem{remark}[theorem]{Remark}
\newtheorem{example}[theorem]{Example}
\endgroup 

\numberwithin{equation}{section}
\setcounter{tocdepth}{2}
\setcounter{secnumdepth}{3} 
 
\newenvironment{pde}{\left\{\begin{array}{rll} } {\end{array}\right.}

 
\newcommand{\N}{\mathbb N}

\newcommand{\R}{\mathbb R} 

\newcommand{\dist}{{\rm dist}}

\renewcommand{\div}{{\rm div}}

\newcommand{\spt}{{\mathrm{spt}}}

\renewcommand{\H}{{\mathcal H}}
\newcommand{\E}{{\mathbb E}}

\newcommand{\M}{{\mathcal M}}

\newcommand{\LRa} {\Leftrightarrow}
\newcommand{\Ra} {\Rightarrow}

\renewcommand{\d}{\mathrm{d}}
\newcommand{\dx}{\,\mathrm{d}x}
\newcommand{\dy}{\,\mathrm{d}y}

\newcommand{\ds}{\,\mathrm{d}s}
\newcommand{\dt}{\,\mathrm{d}t}


\newcommand{\eps}{\varepsilon}
\newcommand{\average}{{\mathchoice {\kern1ex\vcenter{\hrule height.4pt
width 6pt depth0pt} \kern-9.7pt} {\kern1ex\vcenter{\hrule
height.4pt width 4.3pt depth0pt} \kern-7pt} {} {} }}

\renewcommand{\P}{\mathbb P}
\newcommand\showlabel{\addtocounter{equation}{1}\tag{\theequation}}
\DeclareMathOperator*{\argmin}{argmin} 
\newcommand{\Risk}{\mathcal{R}}
\renewcommand{\P}{\mathbb{P}}

 \allowdisplaybreaks


\makeatletter
\def\paragraph{\@startsection{paragraph}{4}%
  \z@\z@{-\fontdimen2\font}%
  {\normalfont\bfseries}}
\makeatother

 \makeatletter
\@namedef{subjclassname@2020}{2020 Mathematics Subject Classification}
\makeatother
 
\begin{document}

\title[Gradient Descent Training for Two-layer ReLU-networks]{On the Convergence of Gradient Descent Training for Two-layer ReLU-networks in the Mean Field Regime}

\author{Stephan Wojtowytsch}
\address{Stephan Wojtowytsch\\
Program in Applied and Computational Mathematics\\
Princeton University\\
Princeton, NJ 08544
}
\email{stephanw@princeton.edu}

\date{\today}

\subjclass[2020]{
35Q68, 
68T07, 
49Q22, 
35F20
}
\keywords{Two-layer network, population risk, Wasserstein gradient flow, mean field model, ReLU activation, global minimizer, convergence, minimum Bayes risk}

\begin{abstract}
We describe a necessary and sufficient condition for the convergence to minimum Bayes risk when training two-layer ReLU-networks by gradient descent in the mean field regime with omni-directional initial parameter distribution. This article extends recent results of Chizat and Bach to ReLU-activated networks and to the situation in which there are no parameters which exactly achieve MBR. The condition does not depend on the initalization of parameters and concerns only the weak convergence of the realization of the neural network, not its parameter distribution.
\end{abstract}

\maketitle


\section{Introduction}

Practitioners have found that artificial neural networks can be trained by gradient descent-based algorithms to fit many complicated target functions. While it is well-understood why the function class is sufficiently expressive for this purpose \cite{barron1993universal,cybenko1989approximation,hornik1991approximation,leshno1993multilayer}, the choice of optimal network parameters in applications is a highly non-convex problem. It is not fully understood why cleverly initialized gradient descent achieves good performance in practice.

In \cite{chizat2018global}, the authors prove that if the parameter distributions of neural network-like models converge to a limiting distribution, then the limit is in fact a global minimizer. The result is true for a class of `spread out' initial conditions (implying very wide networks) and under some technical assumptions. One of the most prominent cases concerns function models with the same homogeneity as neural networks with a single hidden layer and ReLU (or leaky ReLU) activation. However, the result does not apply directly due to the lack of differentiability of these activation functions.

In this article, we extend the main result of \cite{chizat2018global} for this ReLU-like setting and generalize previous results for some cases. The results proved here improve upon previous work in two ways:

\begin{enumerate}
\item Our analysis applies to ReLU-networks with suitable initial conditions rather than toy models with similar properties.
\item We only assume that a limiting object, whose existence is guaranteed by compactness, is unique. 
\end{enumerate}

In the analysis we exploit that ReLU activation
\[
\phi(a,w,b; x) = a\,\max\big\{w^Tx+b, 0\big\}
\] 
is not only positively two-homogeneous on parameter space $\R\times \R^d\times\R$, but in fact the product of two one-homogeneous functions (one of which is the identity map on $\R$). This product structure allows us to avoid certain `bad' points in the gradient flow. We state a special case of the main result informally.

\begin{theorem*}
Consider a two-layer mean field network model
\[
f_\pi(x) = \int_{\R\times\R^d\times \R} a\,\sigma(w^Tx+b)\,\pi(\d a\otimes\d w\otimes\d b)
\]
with activation function $\sigma = $ ReLU (or leaky ReLU) and parameter distribution $\pi$ on $\R\times \R^d\times \R$. Let $\pi_t$ evolve by the 2-Wasserstein gradient flow 
\[
\frac{d}{dt} \pi_t = \div \left(\pi_t\,\nabla \frac{\delta \Risk}{\delta\pi}\right)
\]
of a population risk functional
\[
\Risk(\pi) = \int_{\R^d\times\R } \ell\big(f_\pi(x), y\big)\,\P(\d x \otimes \d y)
\]
with a convex and Lipschitz-continuous loss function $\ell$. If the initial distribution $\pi_0$ is uniform on $[-1,1]\times S^d \subseteq \R\times \R^{d+1}$, then the velocity potential
\[
\frac{\delta \Risk}{\delta\pi}(\pi_t; a,w,b) = \int_{\R^d\times\R }(\partial_1\ell)\big(f_{\pi_t}(x),y\big)\,a\,\sigma(w^Tx+b)\,\P(\d x \otimes \d y)
\]
lies in a compact subset of $C^0_{loc}(\R^{d+2})$ as $t\to\infty$. The following are equivalent.

\begin{enumerate}
\item As time goes to infinity, $\Risk (\pi_t)$ converges to minimum Bayes risk.
\item The velocity potential $\frac{\delta \Risk}{\delta\pi}$ converges to a unique limit $g$ locally uniformly as $t\to\infty$.
\end{enumerate}
If $\Risk(\pi_t)$ approaches MBR, we can additionally identify the limit
\[
\lim_{t\to\infty} \frac{\delta \Risk}{\delta\pi} = 0.
\]
If additionally $\pi_t$ converges to a limit $\pi_\infty$ in 2-Wasserstein distance, $\pi_\infty$ minimizes $\R$.
\end{theorem*}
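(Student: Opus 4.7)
The plan is to exploit the Wasserstein gradient flow dissipation identity together with the product homogeneity of ReLU --- namely $\phi(a,w,b;x) = a\,\sigma(w^Tx+b)$ is $1$-homogeneous in $a$ and in $(w,b)$ separately --- to relate the asymptotics of $V_t := \delta\Risk/\delta\pi$ to those of $\Risk(\pi_t)$. Compactness in $C^0_{loc}(\R^{d+2})$ is immediate from Arzel\`a--Ascoli: Lipschitz continuity of $\ell$ gives $|V_t(a,w,b)| \le C\,|a|(|w|+|b|)$ together with local Lipschitz bounds in $(a,w,b)$ that are uniform in $t$.

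For $(1)\Rightarrow(2)$, if $\Risk(\pi_t) \to \mathrm{MBR}$, convexity and lower semicontinuity of $\Risk$ force $f_{\pi_t}$ to converge, in an appropriate $L^p(\P_X)$ sense, to some Bayes-optimal function $f^*$ satisfying $0 \in \int \partial_1\ell(f^*(x),y)\,\P(dy\mid x)$ for $\P_X$-a.e.\ $x$. Testing this relation against the kernel $\psi(x) = a\,\sigma(w^Tx+b)$ and passing to the limit in the defining integral shows $V_t \to 0$ locally uniformly, which simultaneously identifies the limit uniquely and establishes the claim $\lim V_t = 0$.

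For $(2)\Rightarrow(1)$, the dissipation identity $\frac{d}{dt}\Risk(\pi_t) = -\int|\n V_t|^2\,d\pi_t$ has an integrable right-hand side, so along some sequence $t_n\to\infty$ one has $\int|\n V_{t_n}|^2\,d\pi_{t_n} \to 0$. By hypothesis $V_{t_n}\to g$ locally uniformly, and a subsequential weak limit $\pi_{t_n} \to \pi_\infty$ then yields $\n g = 0$ on $\spt\,\pi_\infty$. Now the product structure $g(a,w,b) = a\,h(w,b)$ with $h$ one-homogeneous, combined with the omnidirectional support of $\pi_\infty$ (preserved from $\pi_0$ by equivariance of the flow under positive rescaling in each factor), forces $h\equiv 0$ and hence $g\equiv 0$. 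Vanishing of $g$ is first-order stationarity of the convex functional $\Risk$, which convexity upgrades to global minimality, so $\Risk(\pi_\infty) = \mathrm{MBR}$. The $W_2$-convergence statement follows by the same product-structure argument applied to the limiting identity $\n V_\infty = 0$ on $\spt\,\pi_\infty$.

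The main obstacle is the non-differentiability of ReLU on $\{w^Tx + b = 0\}$, which blocks a direct application of the Chizat--Bach machinery. The product structure circumvents this because on the non-smooth set the factor $a$ still supplies a differentiable direction, and the two one-homogeneities can be analyzed separately on $\R\times S^d$. The most delicate technical step is propagating the omnidirectional support of $\pi_0$ to $\pi_t$ in a form strong enough to force $h\equiv 0$ at the limit, especially handling mass that may escape to infinity along rays.
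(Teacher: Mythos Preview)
Your compactness argument and the $(1)\Rightarrow(2)$ direction are essentially the paper's approach: Arzel\`a--Ascoli from uniform Lipschitz bounds, and dominated convergence after showing $f_{\pi_t}\to f^*$ pointwise $\overline\P$-a.e.\ along a subsequence so that $(\partial_1\ell)(f_{\pi_t},y)\to(\partial_1\ell)(f^*,y)$ with $\int(\partial_1\ell)(f^*(x),y)\,\P^x(\d y)=0$.

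The $(2)\Rightarrow(1)$ direction, however, has a genuine gap. Your argument hinges on three unproven claims: (i) a subsequential weak limit $\pi_{t_n}\to\pi_\infty$ exists; (ii) $\spt\pi_\infty$ is omni-directional; (iii) from $\Risk(\pi_\infty)=\mathrm{MBR}$ you can deduce $\lim_t\Risk(\pi_t)=\mathrm{MBR}$. For (i), you have no tightness: the second moments $N(\pi_t)$ are only known to grow sublinearly (and may well be unbounded), so mass can escape to infinity. For (ii), omni-directionality is preserved only for \emph{finite} $t$; the density bounds on the sphere degrade like $e^{\pm Ct}$ and say nothing about $t\to\infty$. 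For (iii), weak convergence of $\pi_{t_n}$ gives only lower semicontinuity $\Risk(\pi_\infty)\le\liminf\Risk(\pi_{t_n})$, which goes the wrong way; you would need $W_2$-convergence, which again requires second-moment control you do not have. The student's own closing caveat about ``mass that may escape to infinity along rays'' is precisely the obstruction, and it is not overcome.

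The paper's route is completely different and avoids any limiting measure $\pi_\infty$. It works with the second-moment functional $N(\pi_t)=\int|\theta|^2\,\pi_t(\d\theta)$ as a proxy. Two lemmas do the work: first, $N(\pi_t)$ grows at most sublinearly (a Cauchy--Schwarz estimate against the dissipation); second, if the unique limit $g\not\equiv0$, then by a \emph{cone argument} one finds an open cone $C$ on which $g<-\eps|\theta|^2$ and into which the flow eventually points, so that the localized moment $\int_C|\theta|^2\,\pi_t(\d\theta)$ grows exponentially---contradiction. This uses omni-directionality only at a fixed finite time $T_0$. Once $g\equiv0$ is forced, a direct computation using two-homogeneity gives
\[
-\frac{d}{dt}N(\pi_t)\ \ge\ 4\big[\Risk(\pi_t)-\widetilde\Risk(f^*)\big]+o(1),
\]
so if the excess risk did not vanish, $N$ would eventually become negative---again a contradiction. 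No $\pi_\infty$, no dissipation-on-the-support argument, and no limiting omni-directionality are needed. You should replace your dissipation/support argument by this second-moment dichotomy.
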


The study of Wasserstein gradient flows in the context of (infinitely wide) shallow neural networks is motivated below. The same problem exactly captures the gradient descent training of finite neural networks for infinitesimal learning rate and asymptotically captures stochastic gradient descent when the learning rate is so small/batch size is so large that stochastic effects are negligible.

A further technical condition has to be imposed on $\P$, and the result only holds conditionally on a Morse-Sard type property for suitable two-layer neural networks. The Sard-condition is purely technical, but has neither been verified nor disproved in general. In the appendix, we show that the condition holds in dimension $d=2$, but may fail in certain cases if $d\geq 8$. Without the Morse-Sard condition, we can show that the following are equivalent.

\begin{enumerate}
\item As time goes to infinity, $\Risk (\pi_t)$ converges to minimum Bayes risk.
\item The velocity potential $\frac{\delta \Risk}{\delta\pi}$ converges to $0$ locally uniformly as $t\to\infty$.
\end{enumerate}

The loss function is assumed to be $C^1$-smooth (with bounded derivative). We can consider more general loss functions (e.g.\ mean squared error), but only for uniformly bounded data. The initial condition can be generalized, but we require that at time $t=0$ we have $-a^2 + |w|^2+b^2\geq 0$ $\pi$-almost surely. The Minkowski inner product is preserved along the gradient evolution and the condition $(w,b)\neq 0$ unless also $a=0$ allows us to avoid issues of non-differentiability. To establish existence of a minimizer, it suffices that a certain projection of $\pi_t$ converges.

The significance of our result is as follows. 

\begin{itemize}
\item We do not need to assume the existence of a limit (guaranteed by compactness), but only its uniqueness.
\item One of the main complications of the question of convergence to minimal energy for this gradient flow is the dependence on the initial condition. The question whether the limit of the velocity potential is generically unique can be asked independently of the initial condition and seems more approachable by standard means of analysis. We therefore believe that this perspective might be a first step towards the convergence theory for Wasserstein gradient flows for shallow neural networks.
\end{itemize}

In the proof, we show directly that convergence to minimal energy implies convergence of the velocity potentials $\frac{\delta \Risk}{\delta\pi}$ to zero. This resembles the first order optimality condition in classic calculus and does not require deep geometric insight. In the converse direction, the geometry of the energy landscape is used via the homogeneity of the activation function. While we formulate all results for ReLU-activation, they hold equivalently for leaky ReLU networks. We prove that the second moments of the evolving parameter distribution $\pi_t$ grow at most sublinearly under general conditions and show that if the unique limit $g$ does not vanish identically, they grow exponentially. Thus only $g\equiv 0$ is admissible as a unique limit. In this situation the second moments -- which are bounded below by zero -- decrease linearly at a positive rate unless $\Risk(\pi_t)$ decays to minimum Bayes risk.

The article is structured as follows. In the remainder of this section, we briefly review previous work and collect some notation. In Section \ref{section background}, we describe the setting, state our main assumptions, and review some additional background information on Wasserstein gradient flows and continuity equations. In Section \ref{section main}, we state our main results. Some concrete examples of data distributions and loss functionals to which the results apply are listed in Section \ref{section examples}.
We conclude with a discussion of our results in Section \ref{section conclusion}. Longer proofs and some technical details are collected in Appendix \ref{appendix proofs}. Appendix \ref{appendix sard} is dedicated to the technical condition of Morse-Sard type which we assume.

\subsection{Related Work}
Introductions to machine learning in general and neural networks in specific can be found for example in \cite{shalev2014understanding,higham2019deep,goodfellow2016deep,mehta2019high}. 

The question why (stochastic) gradient descent starting at a suitable random initialization finds good parameters for artificial neural networks despite the fact that the energy landscape is highly non-convex has attracted much attention and several competing explanations have emerged. 

One avenue of research aims to uncover a description of gradient descent in neural networks in the infinite neuron limit in the mean field scaling regime \cite{chizat2018global,rotskoff2018neural, sirignano2018mean,mei2018mean}. Convergence criteria in the shallow network setting are developed for example in \cite{arbel2019maximum} and \cite{chizat2018global}, but have not been established generally.

Another line of articles \cite{weinan2019analysis,weinan2019comparative,bietti2019inductive, du2018gradient, du2018bgradient, jacot2018neural, arora2019exact} considers a heavily overparametrized regime with large initialization. In this setting, the gradient flow for neural networks behaves is proved to behave like the gradient flow of a very wide random feature model \cite{weinan2019comparative, weinan2019analysis} with high probability over the choice of (suitable) initial condition. In particular, the direction and bias of a neuron barely change from their (random) initialization in this regime. Chizat and Bach dub this the `lazy training regime' since neurons hardly move. They show that the underlying analysis is due rather to a (usually implicit) scaling assumption on initialization rather than the specific structure of neural networks \cite{chizat2018note}.

The success of neural networks in practical applications has been explained by the observation that -- unlike any linear theory -- neural networks can beat the `curse of dimensionality'  \cite{barron1993universal}. It thus seems unlikely that linearization is able to explain their recent success. Furthermore, in these studies parameters are usually initialized so large that the path norm a two-layer network with $m$ hidden neurons scales like $\sqrt{m}$ at initialization. Natural generalization bounds as derived in  \cite{weinan2019priori, E:2018ab} therefore do not apply.

Practitioners tend to train neural networks using stochastic gradient descent rather than full gradient descent. For small learning rate (time step size), the evolution can be described by SDEs with Gaussian noise \cite{hu2019diffusion,li2015dynamics}. The noise coefficient is given by the covariance of the gradient, which is usually neither isotropic nor homogeneous. However, in the mean field regime and assuming that the noise is standard Gaussian, one can prove that the parameter distribution converges to a good value (minimizer of a regularized risk functional), see \cite{hu2019mean}. The derivation is built on the link of the heat equation to both stochastic analysis and optimal transport theory \cite{jordan1998variational}. In this case, the parameter distribution approaches the stationary measure of a Markov process as time approaches infinity. If the noise is sufficiently large, the convergence is exponential, while for small noise, the stationary measure approaches a minimizer of the mean field risk functional.

Rigorous convergence results in realistic settings can be obtained under strong assumptions on the initial condition (or a state which arises along the gradient flow). One example is \cite{berlyand2018convergence}, where the authors show that if a neural network has well chosen parameters at some time, the parameters improve along the gradient flow. In \cite{arbel2019maximum} the authors study a mean field gradient flow and show that under a regularity/closeness assumption, the gradient flow converges. Unfortunately, the condition cannot be verified in practice. Global convergence for a toy model with similar properties is established in \cite[Section 7]{E:2019aa}.

Some results in \cite{chizat2018global} also apply to neural networks with smooth activation and more than one hidden layer. However, the imposition of a linear structure results in network-like models where each neuron in the outermost layer has its own trainable weights for the deeper layers. 
More recent works in the mean field setting consider deep neural networks whose parameters are initialized independently across the layers \cite{araujo2019mean,nguyen2020rigorous, sirignano2019mean}. The independence is preserved through time (`propagation of chaos') and a mean field description is available in different scaling limits. The theory is entirely different from that of shallow networks. While a shallow networks can be described by indexed particles $(a_i, w_i, b_i)$, the paths in a deep network through multiple layers have a more complicated interacting multi-index structure $(a_i, b_{ij}, c_j)$.

\subsection{Notations and Terminology}

We denote by $|\cdot|$ the Euclidean norm on any finite-dimensional vector space. For a map $\phi:A\to B$ and a measure $\mu$ on $A$, we denote the push-forward of $\mu$ along $\phi$ by $\phi_\sharp \mu$ (which is a measure on $B$), see e.g.\ \cite{evans2015measure}. By $\spt(\mu)$, we denote the support of the measure $\mu$. For a measure $\mu$ and a locally $\mu$-integrable function $f$ we denote by $f\cdot\mu$ the measure
\[
(f\cdot\mu)(B) = \int_Bf\d\mu
\]
which has density $f$ with respect to $\mu$ (occasionally denoted by $\mu|_f$ in other texts). We denote probability measures on data space by $\P$, projection operators by $P$ and the space of probability measures by $\mathcal P$. Parameter distributions are denoted by $\pi$ and assumed to be elements of the space $\mathcal P = \mathcal P_2$ of probability measures with finite second moments (Wasserstein space).

\section{Background and Assumptions}\label{section background}

\subsection{Activation, Loss and Data}

In this section, we describe the objects under consideration in this article. Some assumptions will be relaxed in Section \ref{section main general loss}. 

\paragraph{Activation function.} We consider the {\em parameter domain}
\[
\Theta = \big\{(a,w,b)\in\R\times\R^d\times \R\::\: a^2 < |w|_{\ell^2}^2 + b^2\}
\]
and the {\em activation function} $\phi:\overline\Theta\to\R$
\[
\phi(\theta; x) = a\,\sigma(w^Tx+b)\qquad\text{where} \quad \theta = (a,w,b), \quad \sigma(z) = z_+ = \max\{z,0\}.
\]
For this introduction, the choice of cone $\Theta$ is inessential and will be motivated later in Section \ref{section initial}. Note  that the family
\[
\{\phi(\theta; x) : \theta \in \Theta\}
\]
has the universal approximation property, i.e.\ for any continuous function $f$ on a compact set $K\subseteq \R^d$ and every $\eps>0$, there exists a finite collection of parameters $\{\theta_1,\dots,\theta_m\}$ such that
\begin{equation}\label{eq UAT}
\left\| f - \frac1m\sum_{i=1}^m \phi(\theta_i,\cdot)\right\|_{C^0(K)} < \eps.
\end{equation}
The property is known to hold when the parameters $\theta_i$ are not constrained to a cone, the factor $\frac1m$ is not present and coefficients $a_i$ are included before $\phi(\theta_i,\cdot)$ \cite{cybenko1989approximation}. None of these differences are significant as
\[
\phi\left(\lambda a; \frac{w}\lambda, \frac{b}\lambda; \cdot \right) \equiv \phi(a,w,b; \cdot) \quad \forall\ \lambda>0\qquad
\text{and}\quad \lambda \,\phi(a,w,b;\cdot) \equiv \phi(\lambda a,w,b;\cdot) \quad \forall\ \lambda\in \R.
\]

\paragraph{Parameter distribution.} A {\em parameter distribution} $\pi$ is a Borel probability measure on $\overline\Theta$ with finite second moments
\[
N(\pi):= \int_{\overline\Theta}|\theta|^2\,\pi(\d\theta) = \int_{\overline\Theta} |a|^2 + |w|_{\ell^2}^2 + |b|^2\,\pi(\d a\otimes \d w \otimes \d b),
\]
i.e.\ $\pi \in \mathcal{P}:= \mathcal{P}_2(\overline\Theta)$ is an element of Wasserstein space over the cone $\overline\Theta$. We denote the {\em realization} of $\pi$ as
\[
f_\pi(x) = \int_{\overline\Theta} \phi(\theta; x)\,\pi(\d \theta)
\]
and note that $f_\pi$ is $N(\pi)$-Lipschitz and defined on the whole space $\R^d$.

\paragraph{Loss function.} We further assume that the {\em loss function} $\ell:\R\times\R \to [0,\infty)$ satisfies the following.
\begin{enumerate}[label=(L\arabic*), ref=(L\arabic*)]
\item\label{assumption continuity l} $\ell$ is jointly continuous.
\item\label{growth condition on l} $\ell$ is once continuously differentiable in the first argument and there exists $C_\ell>0$ such that
\[
|\partial_1\ell| (y,y') \leq C_\ell \qquad\forall\ y,y'\in \R.
\]
\item\label{assumption convexity l} The function $y\mapsto \ell(y,\bar y)$ is convex for all $\bar y\in \R$.
\end{enumerate}

Except for the Lipschitz condition \ref{growth condition on l}, the assumptions are common and non-restrictive. The assumption that $|\partial_1\ell|$ is uniformly bounded is required for technical purposes to compensate the singularity of the ReLU function. It will be relaxed in Section \ref{section main general loss}. 

\paragraph{Data distribution.}
Finally, we consider {\em data-distributions} $\P$ on $\R^d\times \R $ such that

\begin{enumerate}[label=(P\arabic*), ref=(P\arabic*)]
\item\label{assumption P Borel} $\P$ is a Borel measure, i.e.\ every continuous function on $\R^d\times \R $ is $\P$-measurable.
\item\label{assumption P finite moments} The first moments
\[
\int_{\R^d\times \R } |x| + |y|\,\P(\d x\otimes \d y)
\]
are finite.
\item\label{assumption density in L1} The space spanned by the collection of functions $f(\theta_i;\cdot)$ is dense in $L^1(\overline\P)$ where $\overline\P:= P^x_\sharp \P$ denotes the projection of $\P$ on the first component.
\item\label{assumption RL1} We assume that the map
\[
S^d\to L^1\big(\sqrt{1+|x|^2}\cdot\overline\P\big), \qquad (w,b) \mapsto 1_{\{x|w^Tx+b>0\}}
\]
is Lipschitz continuous.
\end{enumerate}

\ref{assumption P Borel} is a non-restrictive technical condition. \ref{assumption P finite moments} is required to show that the risk of linearly growing functions is always finite. \ref{assumption density in L1} is always met if $\overline\P$ is compactly supported due to the Universal Approximation Theorem \cite{cybenko1989approximation} and the density of continuous functions in $L^1$ for any Radon measure \cite[Theorem 2.11]{fonseca2007modern}. \ref{assumption RL1} imposes a high degree of smoothness on the data distribution which is used to compensate for the lack of differentiability of the activation function $\phi$ at points $(a,w,b)$ and $x$ for which $w^Tx+b=0$. Intuitively, $\overline\P$ cannot concentrate on or close to lower-dimensional {\em linear} objects. A full discussion of admissible data distributions is beyond the scope of this article, but we give some examples.

\begin{theorem}\label{theorem admissible data}
\begin{enumerate}
\item Assume that $\overline\P$ has a density $\rho$ with respect to Lebesgue measure on $\R^d$ such that \[
\rho(x) \leq C{\big(1+ |x|^2\big)^{-\frac{d+2+\eps}2}}
\]
for some $C,\eps>0$. Then $\overline\P$ satisfies \ref{assumption RL1}.

\item The uniform distribution on the unit sphere satisfies \ref{assumption RL1}.

\item The set of data-distributions $\overline\P$ satisfying \ref{assumption RL1} is convex.

\item For any $L>0$ the set of data-distributions $\overline\P$ satisfying
\[
\big\|1_{\{w^Tx+b>0\}} - 1_{\tilde w^Tx+\tilde b>0\}}\big\|_{L^1(\sqrt{1+|x|^2}\cdot \overline\P)} \leq L\big[|w-\tilde w| + |b-\tilde b|\big]
\]
is convex and closed under weak convergence of Radon measures on $\R^d$.
\end{enumerate}
\end{theorem}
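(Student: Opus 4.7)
The plan is to first extract a geometric reduction that underlies claims (1) and (2), then dispatch the convexity and closedness parts (3)--(4) by linearity and lower semicontinuity respectively.

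\textbf{Geometric reduction for (1) and (2).} For $(w,b),(\tilde w,\tilde b)\in S^d$, set $\eps' := |w-\tilde w|+|b-\tilde b|$. I would first observe that on the symmetric difference of the half-spaces $A := \{w^Tx+b>0\}$ and $\tilde A := \{\tilde w^Tx+\tilde b>0\}$ one has
\[
|w^Tx+b| \;\leq\; \eps'\,\sqrt{1+|x|^2},
\]
since on $A\setminus\tilde A$ we have $0<w^Tx+b \leq (w-\tilde w)^Tx + (b-\tilde b) \leq |w-\tilde w|\,|x|+|b-\tilde b|$, and the last quantity is at most $\eps'\max(|x|,1) \leq \eps'\sqrt{1+|x|^2}$. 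This reduces each Lipschitz estimate to bounding the $\sqrt{1+|x|^2}\cdot\ol\P$-measure of the slab $S_{\eps'} := \{x\in\R^d : |w^Tx+b|\leq \eps'\sqrt{1+|x|^2}\}$.

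\textbf{Parts (1) and (2).} For (1), I would pass to spherical coordinates $x = r\xi$ with $\xi\in S^{d-1}$. The slab condition becomes $|w^T\xi + b/r| \leq \eps'\sqrt{1+r^{-2}}$, which for $r\gtrsim 1$ carves out a spherical band of angular width $\lesssim \eps'$ and hence surface measure $\lesssim \eps' r^{d-1}$ at radius $r$; for $r\lesssim \eps'$ one uses the trivial bound by the full sphere. Combining with the density decay gives
\[
\int_{S_{\eps'}}\sqrt{1+|x|^2}\,\rho(x)\,\d x \;\leq\; C\eps'\int_0^\infty r^{d-1}(1+r^2)^{-(d+1+\eps)/2}\,\d r \;\leq\; C'\eps',
\]
since the integrand decays like $r^{-2-\eps}$ at infinity and is integrable at the origin. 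For (2), $|x|=1$ on the unit sphere so the weight is $\sqrt 2$, and the band $S_{\eps'}\cap S^{d-1}=\{\xi\in S^{d-1}:|w^T\xi+b|\leq\sqrt 2\,\eps'\}$ has surface measure $\lesssim \eps'$ by the classical estimate.

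\textbf{Parts (3) and (4).} Both convexity assertions are immediate from the linearity of the left-hand side in $\ol\P$. For the closedness claim in (4), the key observation is that $|1_A-1_{\tilde A}|$ is the indicator of the \emph{open} set $\{(w^Tx+b)(\tilde w^Tx+\tilde b)<0\}$ and is therefore lower semicontinuous in $x$. For a continuous cutoff $\psi_R\in C_c(\R^d)$ with $\psi_R\equiv 1$ on $\{|x|\leq R\}$, the product $\psi_R(x)\sqrt{1+|x|^2}\,|1_A-1_{\tilde A}|$ is nonnegative, bounded, lower semicontinuous and compactly supported. If $\ol\P_n\wto\ol\P$ and each $\ol\P_n$ satisfies the Lipschitz bound with constant $L$, the Portmanteau theorem gives
\[
\int \psi_R\sqrt{1+|x|^2}\,|1_A-1_{\tilde A}|\,\d\ol\P \;\leq\; \liminf_n \int \psi_R\sqrt{1+|x|^2}\,|1_A-1_{\tilde A}|\,\d\ol\P_n \;\leq\; L\big(|w-\tilde w|+|b-\tilde b|\big),
\]
and monotone convergence as $R\to\infty$ removes the cutoff. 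The main technical obstacle is the slab estimate in (1), where the density decay, the weight $\sqrt{1+|x|^2}$, and the $|x|$-dependent slab thickness must balance uniformly in $(w,b,\tilde w,\tilde b)$ across all scales of $r$; by contrast the closure argument in (4) is soft once one recognises that the Lipschitz condition is an upper bound against a lower semicontinuous integrand, i.e.\ the direction preserved by weak convergence.
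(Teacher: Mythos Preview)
Your treatment of (3) and (4) matches the paper's: convexity by linearity, closedness by lower semicontinuity of the integrand (the paper simply invokes $\overline\P(U)\le\liminf\overline\P_n(U)$ for open $U$, while you spell out the Portmanteau-plus-cutoff argument). Both versions gloss over the fact that $A\triangle\tilde A$ differs from the open set $\{(w^Tx+b)(\tilde w^Tx+\tilde b)<0\}$ only on the two boundary hyperplanes; this is harmless but worth noting.

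For (1)--(2) you take a genuinely different route. The paper differentiates: it fixes $(w,b)$, computes $\limsup_{h\to0}$ of the difference quotient separately in the radial direction (varying $b$) and in directions tangent to $w$, and obtains two explicit upper bounds---one valid for $b$ away from $\pm1$ and one valid for $b$ near $\pm1$---which together give uniformity. You instead bound the symmetric difference by the single slab $S_{\eps'}=\{|w^Tx+b|\le\eps'\sqrt{1+|x|^2}\}$ and estimate its weighted measure in spherical coordinates, treating all perturbation directions at once. This is cleaner in principle, but the uniformity issue you flag at the end is real and your sketch does not resolve it: the assertion that the slab carves a spherical band of ``angular width $\lesssim\eps'$'' on $rS^{d-1}$ is false uniformly in $(w,b)\in S^d$. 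Writing $t=w^T\xi/|w|$, the slab condition is $|t+b/(r|w|)|\le\eps'\sqrt{1+r^{-2}}/|w|$, so the $t$-interval has length $\sim\eps'/|w|$, not $\eps'$; your displayed integral is therefore off by a factor $1/|w|$ and blows up as $|w|\to0$. The fix is a case split exactly parallel to the paper's two bounds: when $|w|$ is small one has $|b|$ close to $1$, and the slab condition then forces $|x|\gtrsim|b|/|w|$, so the integral lives where the density decay $\rho(x)\le C(1+|x|^2)^{-(d+2+\eps)/2}$ kicks in and supplies the missing factor. Without this split the argument is incomplete.
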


We note that as a finite Borel measure on the locally compact Polish space $\R^d$, $\P$ is in fact a Radon measure. Due to \cite[Theorem 4.2.4]{attouch2014variational} $\P$ can be decomposed into conditional probabilities $\P^x$ and a distribution $\overline\P = P^x_\sharp \P$ on $\R^d$ like in \ref{assumption  density in L1} such that
\begin{enumerate}
\item for any $\P$-measurable function $g:\R^d\times\R \to\R$, the map
\[
x\mapsto \int_{\R } g(x,y)\, \P^x(\d y)
\]
is $\overline\P$-measurable and
\item the equality
\[
\int_{\R^d\times\R } g(x,y)\,\P(\d x \otimes \d y)= \int_{\R^d}\left(\int_{\R } g(x,y)\,\P^x(\d y)\right) \,\overline\P(\d x)
\]
holds for all $\P$-measurable functions $g$.
\end{enumerate} 
Consider the {\em augmented loss function}
\[
L: \R^d \times \R\to [0,\infty), \qquad L_x(\alpha):= \int_{\R } \ell(\alpha,y)\,\P^x(\d y)
\]
which encodes many important properties of the problem. 

\subsection{The Risk Functional}\label{section risk functional}
Combining all previous notions, we define the risk functional $\Risk:\mathcal P_2(\overline \Theta) \to [0,\infty]$,
\begin{equation}
\Risk(\pi):= \int_{\R^d\times\R } \ell\big(f_\pi(x), y\big)\,\P(\d x \otimes \d y) = \int_U L_x\big(f_\pi(x)\big) \,\overline\P(\d x).
\end{equation}

We impose the following compatibility conditions between $\ell$ and $\P$ to control the behaviour of $\Risk$.

\begin{enumerate}[label=(LP\arabic*), ref=(LP\arabic*)]\setcounter{enumi}{0}
\item\label{assumption growth at infinity} {\em Growth of augmented loss}:
\[
\lim_{|\alpha|\to\infty} L_x(\alpha) = \infty
\]
for $\overline\P$-almost every $x\in \R^d$.

\item {\em Finite risk}:\label{assumption finite risk}
\[
\int_{\R^d\times \R }\ell(0, y)\,\P(\d x\otimes \d y) <\infty.
\]
\end{enumerate}

\begin{remark}
A few observations are in order. Recall that $\mathcal P$ or $\mathcal P_2$ denotes the space of Radon measures with finite second moments (on the natural space in a given context, which usually is $\overline\Theta$).
\begin{enumerate}
\item Since $\ell$ and $f_\pi$ are Lipschitz continuous, we can use \ref{assumption finite risk} and \ref{assumption P finite moments} to deduce that $\Risk(\pi)<\infty$ for all $\pi \in\mathcal P_2$.
\item For any fixed $x$, the function $L_x$ is convex and differentiable in $\alpha$ if it is finite (i.e.\ for almost every $x$). If $\ell$ is strictly convex, so is $L_x$.
\item The function $(x,\alpha)\to L_x(\alpha)$ is measurable (see Lemma \ref{lemma measurable} below), but cannot generally be assumed to be continuous.
\end{enumerate}
\end{remark}

We can extend the risk functional to the larger space of $\overline\P$-measurable functions by setting
\[
\widetilde \Risk (f) := \int_{\R^d\times\R } \ell\big(f(x), y\big)\,\P(\d x\otimes\d y) = \int_U L_x\big(f(x)\big) \,\overline\P(\d x)
\]
with possibly infinite values. $L_x$ is convex and continuous in $\alpha$ for $\overline\P$-almost every $x$ and satisfies the growth condition \ref{assumption growth at infinity}, so there exists a compact convex set $M_x\subset \R $ such that $L_x(\alpha) = \inf_{\alpha'\in\R } L_x(\alpha')$ for all $\alpha\in M_x$. If $L_x$ ist strictly convex (for example because $\ell$ is strictly convex), then the minimum of $L_x$ is unique. 

We note that there exists a measurable selection of minimizer for $L_x$. This is not entirely immediate even when $L_x$ is strictly convex. 

\begin{lemma}\label{lemma measurable}
\begin{enumerate}
\item The function $(x,\alpha)\to L_x(\alpha)$ is measurable on $\R^d\times\R $ with respect to the product $\sigma$-algebra generated by $\overline\P$ and the Borel sigma algebra on $\R $.

\item There exists a $\overline\P$-measurable function $f^*:\R^d\to\R $ such that $f^*(x)\in M_x$ for $\overline \P$-almost every $x\in \R^d$. $f^*$ satisfies
\[
(\nabla L_x)\big(f^*(x)\big) = \int_{\R } \big(\partial_1\ell\big)\big(f^*(x),y\big)\,\P^x(\dy) = 0\qquad\overline\P-\text{a.e.}
\]
\end{enumerate}
\end{lemma}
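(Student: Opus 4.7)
For part (1), my plan is to combine a disintegration argument in the first variable with a continuity argument in the second and invoke the standard fact that a Carathéodory function is jointly measurable. Since $\ell$ is jointly continuous by \ref{assumption continuity l} and bounded above by $|\ell(0,y)| + C_\ell|\alpha|$ thanks to the Lipschitz estimate \ref{growth condition on l}, and since $\int\ell(0,y)\,\P^x(\d y)$ is finite for $\overline\P$-a.e.\ $x$ by \ref{assumption finite risk} combined with the disintegration property of $\P$, the integral
\[
L_x(\alpha) = \int_{\R} \ell(\alpha, y)\,\P^x(\d y)
\]
is $\overline\P$-measurable in $x$ for every fixed $\alpha$. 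Dominated convergence using the envelope above on any bounded $\alpha$-interval shows $\alpha\mapsto L_x(\alpha)$ is continuous for $\overline\P$-a.e.\ $x$. A function measurable in one coordinate and continuous in the other is Carathéodory, hence jointly measurable on the product $\sigma$-algebra.

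For part (2), I would first differentiate under the integral sign: the uniform bound $|\partial_1\ell|\leq C_\ell$ from \ref{growth condition on l} together with continuity of $\partial_1\ell$ in $\alpha$ permits dominated convergence against the probability measure $\P^x$, yielding
\[
L_x'(\alpha) = \int_\R (\partial_1\ell)(\alpha, y)\,\P^x(\d y)
\]
as a continuous function of $\alpha$ for $\overline\P$-a.e.\ $x$. Convexity \ref{assumption convexity l} of $\ell$ in its first slot passes to $L_x$ and makes $L_x'$ non-decreasing, while the growth condition \ref{assumption growth at infinity} forces $L_x'$ to take both signs (otherwise $L_x$ would be monotone and fail to diverge at one end). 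Therefore $\{\alpha:L_x'(\alpha)\geq 0\}$ is a closed half-line $[M(x),\infty)$ with $L_x'(M(x))=0$, and I set $f^*(x):=M(x)$. By convexity $f^*(x)\in M_x$, and the required first-order identity holds by construction.

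It remains to verify that $f^*$ is $\overline\P$-measurable. Monotonicity of $L_x'$ reduces the sublevel sets of $f^*$ to the level sets of a single evaluation: for every $c\in\R$,
\[
\{x:f^*(x)\leq c\} \;=\; \{x: L_x'(c)\geq 0\}.
\]
The right-hand side is $\overline\P$-measurable because $x\mapsto L_x'(c)$ is, by the same disintegration argument used in part (1) applied now to $\partial_1\ell$ rather than $\ell$. This completes the selection.

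The principal obstacle would normally be the measurable selection: without further structure one usually invokes the Kuratowski--Ryll-Nardzewski theorem, which in turn demands measurability of the set-valued map $x\mapsto M_x$. Here the convex $C^1$ structure coming from the bounded-derivative hypothesis \ref{growth condition on l} lets me write the selector in closed form through the first-order condition, bypassing the set-valued machinery entirely. The remaining steps amount to bookkeeping around $\overline\P$-null sets from the disintegration of $\P$, which is routine.
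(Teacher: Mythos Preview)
Your proof is correct. Part (1) follows exactly the paper's route: show that $x\mapsto L_x(\alpha)$ is measurable for each fixed $\alpha$ via the disintegration, observe that $\alpha\mapsto L_x(\alpha)$ is continuous $\overline\P$-a.e., and conclude joint measurability from the Carath\'eodory property. For Part (2) the paper instead invokes the Kuratowski--Ryll-Nardzewski selection theorem: it verifies that the argmin correspondence $x\mapsto M_x$ is weakly measurable by writing $\{x:M_x\cap V\neq\emptyset\}=\{x:\inf_{\alpha\in V}L_x(\alpha)=\inf_{\alpha\in\R}L_x(\alpha)\}$, which is measurable since the infimum over a countable dense subset of $V$ is. Your argument is genuinely different and more elementary: you exploit the $C^1$ structure from \ref{growth condition on l} to differentiate under the integral, then use monotonicity of $L_x'$ together with the coercivity \ref{assumption growth at infinity} to write the selector explicitly as the left endpoint of $\{L_x'\geq 0\}$, whose sublevel sets reduce to single evaluations $\{x:L_x'(c)\geq 0\}$. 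Your approach buys a self-contained argument with no set-valued measurability machinery; the paper's approach buys robustness---it would survive weakening \ref{growth condition on l} to, say, one-sided differentiability or mere continuity of $\ell$, where no closed-form first-order selector is available. You correctly anticipated this trade-off in your closing paragraph.
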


It is clear that
\[
\Risk(\pi) = \widetilde \Risk (f_\pi) \geq \widetilde \Risk (f^*)
\]
and that equality is achieved if and only if $f_\pi(x)\in M_x$ $\overline\P$-almost everywhere. In particular, if $L_x$ is strictly convex, equality holds if and only if $f_\pi= f^*$ $\overline\P$-almost everywhere. We add the following compatibility condition between $\P$ and $\ell$.

\begin{enumerate}[label=(LP\arabic*), ref=(LP\arabic*)]\setcounter{enumi}{2}
\item\label{assumption minimizer L1} {\em Minimum Bayes risk}: There exists a version of $f^*$ in $L^1(\overline\P)$. 
\end{enumerate}

Since $\ell$ is Lipschitz-continuous in the first argument, assumption \ref{assumption density in L1} implies that
\[
\inf_{\pi \in \mathcal P_2(\overline\Theta)} \Risk(\pi) = \widetilde \Risk(f^*)
\]
as $f^*$ can be approximated arbitrarily well in $L^1(\overline\P)$ by functions of type $f_\pi$. Thus $\widetilde\Risk(f^*)$ is the minimum Bayes risk of the problem.

\begin{corollary}
The functional $\Risk$ admits a minimizer if and only if there exists a measure $\pi\in\mathcal P$ such that
\[
f_\pi = f^*\quad \overline\P-\text{almost everywhere.}
\]
\end{corollary}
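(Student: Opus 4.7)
The plan is to unpack the equality case analysis carried out in the paragraph immediately preceding the corollary. By \ref{assumption density in L1} together with the Lipschitz continuity of $\ell$ in its first argument, the identity $\inf_{\pi \in \mathcal{P}_2(\overline\Theta)} \Risk(\pi) = \widetilde\Risk(f^*)$ has already been established, and I will use it freely. Recall also that $\Risk(\pi) < \infty$ for every $\pi \in \mathcal{P}_2$ and that $f^* \in L^1(\overline\P)$ by \ref{assumption minimizer L1}, so all integrals written below are finite.

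For the \emph{if} direction, suppose $\pi \in \mathcal{P}$ satisfies $f_\pi = f^*$ $\overline\P$-almost everywhere. Since $\widetilde\Risk(f)$ depends only on the $\overline\P$-almost everywhere equivalence class of $f$, this yields $\Risk(\pi) = \widetilde\Risk(f_\pi) = \widetilde\Risk(f^*) = \inf_{\pi'} \Risk(\pi')$, so $\pi$ minimizes $\Risk$. This is the routine direction.

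For the substantive \emph{only if} direction, suppose $\pi^* \in \mathcal{P}$ minimizes $\Risk$. For $\overline\P$-almost every $x$ the pointwise inequality $L_x(f^*(x)) = \inf_{\alpha \in \R} L_x(\alpha) \leq L_x(f_{\pi^*}(x))$ holds because $f^*(x) \in M_x$ minimizes $L_x$. Integrating against $\overline\P$ yields
\[
\widetilde\Risk(f^*) \;\leq\; \widetilde\Risk(f_{\pi^*}) \;=\; \Risk(\pi^*) \;=\; \inf_{\pi'}\Risk(\pi') \;=\; \widetilde\Risk(f^*),
\]
so every inequality is an equality. The nonnegative integrand $L_x(f_{\pi^*}(x)) - L_x(f^*(x))$ therefore has zero integral with respect to $\overline\P$ and so vanishes $\overline\P$-a.e., forcing $f_{\pi^*}(x) \in M_x$ $\overline\P$-a.e. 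In other words, $f_{\pi^*}$ is itself a measurable selector of pointwise minimizers of $L_x$ in the sense of Lemma \ref{lemma measurable}, and $f_{\pi^*} \in L^1(\overline\P)$ since $f_{\pi^*}$ is Lipschitz with $\overline\P$ having finite first moments by \ref{assumption P finite moments}. Choosing this concrete selector as the reference $f^*$ produces the desired $\pi := \pi^*$ with $f_\pi = f^*$ $\overline\P$-a.e.

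The main obstacle is essentially absent: the corollary repackages the equality analysis above. The one subtlety worth flagging is that $f^*$ is only defined up to the non-uniqueness of measurable selectors of $\{M_x\}_{x}$, so the conclusion $f_\pi = f^*$ must be read as permitting a choice among such selectors. Under strict convexity of $\ell$, each $M_x$ is a singleton $\overline\P$-a.e.\ and this ambiguity disappears; in general, one simply replaces the abstract $f^*$ by the selector $f_{\pi^*}$ produced along the way.
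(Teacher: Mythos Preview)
Your proof is correct and is precisely the unpacking of the preceding paragraph that the paper intends; the paper gives no separate proof of this corollary, treating it as immediate from the equality case $\Risk(\pi)=\widetilde\Risk(f^*)\Leftrightarrow f_\pi(x)\in M_x$ $\overline\P$-a.e.\ together with $\inf_\pi\Risk(\pi)=\widetilde\Risk(f^*)$. Your observation that the statement must be read as allowing $f^*$ to be any measurable selector of $\{M_x\}$ (and that $f_{\pi^*}$ itself furnishes one) is exactly the right way to handle the non-strictly-convex case.
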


In particular, as $f_\pi$ is $N(\pi)$-Lipschitz (where again $N(\pi)$ denotes the second moments of $\pi$), there must be a version of $f^*$ which is Lipschitz-continuous. The Lipschitz condition is far from sufficient \cite{approximationarticle}. Since 
\[
f_{\lambda\pi + (1-\lambda)\pi'} = \lambda f_\pi + (1-\lambda)\,f_{\pi'} \qquad\forall\ \pi,\pi' \in \mathcal P, \:\lambda\in[0,1],
\]
the set of minimizers is a convex subset of $\mathcal P$.

\begin{remark}
Write $T(a,w,b) = (-a, w,b)$. The fact that $\phi(T\theta,\cdot) \equiv -\phi(\theta,\cdot)$ implies that the measure $\pi$ representing a function $f_\pi$ cannot be unique: Any measure $\pi$ such that $T_\sharp \pi = \pi$ (i.e.\ $\pi(T^{-1}(V)) = \pi(V)$ for all measurable $V\subseteq \Theta$) represents the function $0$ since
\[
f_\pi(x) = \int \phi(\theta,x) \,\pi(\d\theta) = \int\phi(\theta,x) \,T_\sharp\pi(\d\theta) = \int \phi(T\theta,x)\,\pi(\d\theta) =- \int \phi(\theta,x) \,\pi(\d\theta) = - f_\pi(x).
\]
In ReLU networks, another source of non-uniqueness is the identity
\[
0 = (x+1) - x- 1 = \sigma(x+1) - \sigma\big(-(x+1)\big) - \sigma(x) + \sigma(-x) - \sigma(1).
\]
More generally, if $\pi$ represents $f_\pi$ and $\pi'$ represents $0$, then for any $\lambda\in (0,1)$ consider the probability measure
\[
\pi_\lambda = \lambda\, D(\lambda)_\sharp \pi+ (1-\lambda)\pi'
\]
where $D(\lambda) = \mathrm{diag}(\lambda^{-1/2})$ is a dilation. Then $\pi_\lambda$ is a probability measure with finite second moments and
\[
f_{\pi_\lambda} = \lambda f_{D(\lambda)_\sharp\pi} + (1-\lambda)\,f_{\pi'} = 
\lambda \int_\Theta f(\lambda^{-1/2}\theta,\cdot) \,\pi(\d\theta) = \int_\Theta f(\theta,\cdot)\,\pi(\d\theta) = f_\pi.
\]
In particular, while the minimizer $f^*$ of $\widetilde \Risk $ is unique if $L_x$ is strictly convex, a minimizer of $\Risk$ must be highly non-unique. This is a major obstacle in linearization-based approaches to convergence.
\end{remark}

\subsection{Wasserstein Gradient Flows}\label{section wasserstein gf}
The study of Wasserstein gradient flows in machine learning is motivated by the following observation: {\em The parameters $\{\theta_i\}_{i=1}^m \in \overline\Theta^m$ of a parametrized function
\[
f_{\theta_1,\dots\theta_m}(x) = \frac1m \sum_{i=1}^m \phi(x,\theta_i)
\]
evolve by the time-accelerated Euclidean gradient flow
\[
\dot \theta_i = -m\,\nabla_{\theta_i} \Risk\big(\theta_1(t),\dots,\theta_m(t)\big)
\]
of 
\[
\Risk(\theta_1,\dots,\theta_n) = \int_{\R^d\times \R} \ell\big(f_{\theta_1,\dots,\theta_m}(x), y\big)\,\P(\d x\otimes \d y)
\]
if and only if their distribution $\pi_m = \frac1m \sum_{i=1}^m \delta_{\theta_i}$ follows the $2$-Wasserstein gradient flow of the extended risk functional
\[
\Risk(\pi) = \int_{\R^d\times \R} \ell\big(f_\pi(x), y\big)\,\P(\d x\otimes \d y)
\]
where
\[
f_\pi(x) = \int_{\overline\Theta} \phi(\theta;x)\,\pi(\d\theta).
\]
}

A key observation is that the individual particles $\theta_i$ are irrelevant and only their distribution matters when computing $f_{\theta_1,\dots,\theta_m}$. We refer to two-layer network functions of the form $f(x) = \frac1m\sum_{i=1}^m a_i\,\sigma(w_i^Tx+b_i)$ as {\em mean field networks} in contrast to {\em classical two-layer networks} $f(x) = \sum_{i=1}^m a_i\,\sigma(w_i^Tx+b_i)$. Both classes are identical from the perspective of approximation theory, but lead to different dynamic models in the infinite-width limit. Classical networks are described by the linearized dynamics of neural tangent kernels, while mean field networks evolve truly non-linearly by Wasserstein gradient flows.

Thus optimizing mean field network parameters by the gradient flow of a risk functional is equivalent to optimizing their distribution by a Wasserstein gradient flow, see e.g. \cite[Proposition B.1]{chizat2018global}. An expanded heuristic can also be found in Appendix \ref{appendix proofs}. 

The gradient flow and the map $\pi\mapsto f_\pi$ (so also the risk functional $\Risk$) are naturally defined on the Wasserstein space $\mathcal P_2$ of probability measures with finite second moments. Background on optimal transport theory and Wasserstein gradient flows can be found e.g.\ in \cite{ambrosio2008gradient, santambrogio2015optimal, villani2008optimal}.

In this article, we mostly consider Wasserstein gradient flows of continuous distributions. By continuity, the Wasserstein gradient flows $\pi_m(t)$ starting at $\pi_m^0$ converge to the solution of the Wasserstein gradient flow $\pi(t)$ starting at $\pi^0 = \lim_{m\to\infty}\pi_m^0$ for all $t>0$. Even more, the limits
\[
\lim_{m\to\infty}\lim_{t\to\infty}\pi_m(t) = \lim_{t\to\infty}\lim_{m\to\infty}\pi_m(t) = \lim_{t\to\infty} \pi(t)
\]
commute (if they exist). A proof under an additional technical condition (which can be eliminated if one considers gradient flows of unregularized risk or the second moment regularizer) is given in Appendix B of \cite{chizat2018global}. The complication arises from regularizing functionals which do not have the correct homogeneity, which leads the authors to consider compactly supported initial conditions.

Hence if the gradient flow starting at a measure $\pi^0$ converges to a minimizer of risk, then gradient flows starting at closeby empirical measures asymptotically achieve low risk. The theory, at this point, is purely qualitative, but nonetheless indicative for practical applications.

The Wasserstein gradient flow $\pi_t$ is described by the continuity equation
\begin{equation}\label{eq gradient flow}
\frac{d}{dt} \pi_t= \div_\theta\big(\pi_t\,\nabla_\theta(\delta_\pi\Risk )(\pi_t,\cdot)\big)
\end{equation}
in the distributional sense, i.e.\
\[
\frac{d}{dt} \int_\Theta g(\theta)\,\pi_t(\d\theta) = - \int_\Theta \big\langle\nabla g(\theta), \nabla_\theta(\delta_\pi\Risk )\big\rangle\,\pi_t(\d\theta)
\]
for $g\in C_c^\infty(\Theta)$ where
\begin{align*}
\delta_\pi\Risk (\pi,\theta) &= \int_{\R^d\times\R } (\partial_1\ell)(f_\pi(x),y) \, \phi(\theta,x)\,\P(\d x \otimes \d y)\\
\nabla_\theta \,\delta_\pi\Risk (\pi,\theta) &= \int_{\R^d\times\R } (\partial_1\ell)(f_\pi(x),y) \,\nabla_\theta \phi(\theta,x)\,\P(\d x \otimes \d y)
\end{align*}
denote the variational derivative of $\Risk$ with respect to $\pi$ and its spatial gradient respectively.

\subsection{Continuity Equations}\label{section continuity eqn}
Consider a general continuity equation 
\[
\begin{pde}
\frac{d\mu}{dt} &= \div\left(\mu V\right)&t>0\\
\mu &= \mu_0(\theta) & t=0
\end{pde}
\]
in the space of finite Radon measures on the whole space $\R^d$ and the associated flow map
\begin{equation}\label{eq flow map}
\begin{pde}
\frac{d X}{dt}(t,\theta) &= V\big(t, X(t,\theta)\big)&t>0\\
X(0,\theta) &= \theta
\end{pde}.
\end{equation}
Assume for the moment that $V$ lies in the Bochner space $L^1\big((0,T), W^{1,\infty}(\R^d;\R^d)\big)$, i.e.\ $V$ is Lipschitz-continuous in space at almost every time $t>0$ and 
\[
\int_0^T \|V(t,\cdot)\|_{L^\infty(\R^d)}+ \int_0^T\|\nabla_\theta V(t,\cdot)\|_{L^\infty(\R^d)}\dt <\infty.
\]
Then according to \cite[Proposition 4]{ambrosio2008transport}, we have
\begin{equation}\label{eq flow}
\mu_t = X(t,\cdot)_\sharp \mu_0,\quad\text{i.e. }\int_{\R^d}g(\theta)\,\mu_t(\d\theta) = \int_{\R^d}g\big(X(t,\theta)\big)\,\mu_0(\d\theta)
\end{equation}
for all $g \in C_c^\infty(\R^d)$. Note that by Gr\"onwall's Lemma we also have
\begin{equation}\label{eq lipschitz bound}
\mathrm{Lip}(X(t,\cdot))\leq \exp\left(\int_0^t \mathrm{Lip}(V(s,\cdot))\ds\right).
\end{equation}
The Lemma does not apply directly to the situation which we will consider since the flow field $V$ will be positively one-homogeneous and thus unbounded in most cases. However, the result also applies under the weaker assumption
\[
\frac{V(t,\theta)}{1+ |\theta|} \in L^1\big([0,T],\,L^\infty(\R^d;\R^d)\big),
\] 
see \cite[Remark 7]{ambrosio2008transport}. The condition of at most linear growth is required to prevent particles from escaping to infinity in finite time. Thus \eqref{eq flow} and \eqref{eq lipschitz bound} apply also in our situation.

\subsection{Initial Condition}\label{section initial}

There are two considerations concerning the initial parameter distribution of the gradient flow. The first is specific to ReLU activation and of a technical nature while the second one is more geometric and concerns energy decay to minimum Bayes risk/convergence to minimizers.

\subsubsection{The cone of good parameters}
We can formally compute the parameter gradient of the activation function
\[
\nabla_\theta \phi(\theta; x) = \begin{pmatrix} \sigma(w^Tx+b)\\ a\,\sigma'(w^Tx+b)\,x\\ a\,\sigma'(w^Tx+b)\end{pmatrix}
\]
which is defined at all $\theta = (a,w,b)$ and $x$ such that $w^Tx+b\neq 0$ with $\sigma'(z) = 1_{\{z>0\}}$. In particular, if for every $(w,b)\neq 0$ the hyperplane $\{x:w^Tx+b=0\}$ is a $\overline\P$-null set, then the parameter gradient of risk
\[
\nabla (\delta_\pi \Risk)(\pi; \theta) = \int_{\R^d\times \R} (\partial_1\ell)\big(f_\pi(x), y\big)\,(\nabla_\theta \phi)(\theta;x)\,\P(\d x\otimes \d y) 
\]
is well-defined at $(w,b)\neq 0$. This is a regularity condition on $\overline \P$ which is satisfied for example whenever $\overline\P$ has a density with respect to Lebesgue measure or the uniform distribution on a sphere. For technical reasons, we need the gradient to even be Lipschitz continuous, which is why we imposed assumption \ref{assumption RL1} on the data distribution $\P$. 

There is a more subtle problem of regularity. Assumption \ref{assumption RL1} guarantees that 
\[
(w,b) \mapsto \int_{\R^d\times\R} (\partial_1\ell) \big(f_\pi(x), y\big)\,1_{\{w^Tx+b\}}\,\P(\d x\otimes \d y)
\]
is a Lipschitz-function on the sphere $S^d\subseteq\R^{d+1}$. However, close to a point where $(w,b) = 0$ and $a\neq 0$, the half-spaces $\{x:w^Tx+b>0\}$ oscillate rapidly and the gradient
\[
(a,w,b) \mapsto \int_{\R^d\times\R} (\partial_1\ell) \big(f_\pi(x), y\big)\,1_{\{w^Tx+b\}}\,\P(\d x\otimes \d y)
\]
generally fails to even be continuous. This difficulty can only be circumvented if $a$ is close to zero whenever $(w,b)$ is close to zero. At this point, the geometry of the ReLU function as the product of two positively one-homogeneous functions comes into play. Namely, we can use the fact that 
\[
\sigma(z) = \sigma'(z)z
\]
to show that along trajectories of the flow map $X$ in \eqref{eq flow map}, the Minkowski norm $-a^2 + |w|_{\ell^2}^2 + b^2$ remains constant: 
\begin{align*}
\frac d{dt}& \big[-a^2 + |w|_{\ell^2}^2 + b^2\big] = 2 \big[-a\dot a + \langle w, \dot w\rangle + b\dot b\big]\\
	&= 2\big[a\,\partial_a(\delta_\pi \Risk) - \langle w, \nabla_w(\delta_\pi\Risk)\rangle -b\,\partial_b(\delta_\pi\Risk)\big]\\
	&= 2\int_{\R^d} (\partial_1\ell)(f_\pi(x), y) \big[a\,\sigma(w^Tx+b) - a\,\sigma'(w^Tx+b)\cdot(w^Tx+b)\big]\,\P(\d x \otimes \d y)\\
	&= 0.
\end{align*}
In particular, the cone of space-like vectors 
\[
\Theta = \{(a,w,b): -a^2 + |w|^2 + b^2>0\}
\]
is preserved along the gradient flow evolution. Inside $\overline\Theta$, we have $|a| \leq \sqrt{|w|^2 + b^2}$, which allows us to save the Lipschitz property. We therefore restate an assumption on the initial condition more explicitly.

\begin{enumerate}[label=(IC\arabic*), ref=(IC\arabic*)]\setcounter{enumi}{0}
\item\label{assumption initial cone} {\em Small linear variable}: $\pi_0\in \mathcal{P}_2(\overline\Theta)$, i.e.\ $\pi_0$ is a probability measure on $\R^{d+2}$ with finite second moments such that $\spt(\pi_0)\subseteq \overline\Theta$.
\end{enumerate}

\begin{remark}
It is possible to consider initial conditions supported on other super-level sets like $\Theta_\eps = \{(a,w,b): -a^2 + |w|^2 + b^2>\eps^2\}$ instead. A flow starting at $\pi_0$ supported on $\Theta_\eps$ will never reach a point where $(w,b) = 0$, but on the other hand does not allow us to exploit the positive two-homogeneity of $\phi$ in $\theta$ as easily since $\Theta_\eps$ is not a cone. Positive two-homogeneity is essential to many arguments below, so we proceed with $\Theta = \Theta_0$ instead.
\end{remark}

\subsubsection{Omni-directional initial conditions}

It is well-known that the functional $\Risk$ is not sufficiently convex in Wasserstein geometry to guarantee convergence to global minimizers from any initial condition. In particular, if a global minimizer $\pi$ exists but cannot be written as an empirical measure with $m$ atoms, any initial condition corresponding to an empirical measure with $m$ atoms {\em cannot} converge to $\pi$ since the continuity equation has no smoothing effect and preserves atomic measures. 

The following class of initial conditions is successful in theory and applications.

\begin{definition}[Omni-directional measure]
We call a probability measure $\pi$ on $\Theta$ {\em omni-directional} if every open cone in $\Theta$ has positive measure.
\end{definition}

\begin{enumerate}[label=(IC\arabic*), ref=(IC\arabic*)]\setcounter{enumi}{1}
\item\label{assumption omnidirectional} {\em Omnidirectional initial condition}: $\pi_0$ is omni-directional.
\end{enumerate}

Clearly, an omni-directional initial condition is an abstraction available only in the infinite-width limit.

\begin{remark}
A measure $\pi_0$ can be omni-directional in $\Theta= \Theta_0$ and yet supported on a smaller set $\Theta_\eps$ for $\eps>0$ since for any $\theta\in \Theta$ and $\lambda\in\R$ the Minkowski norm of $\lambda\theta$ is
\[
\lambda^2(-a^2 + |w|^2+b^2) >\eps^2 \quad \forall\ \lambda> \frac{\eps}{\sqrt{-a^2+|w|^2 + b^2}}.
\]
Thus for any $\theta\in \Theta$, there exists $\lambda>0$ such that $\lambda\theta\in \Theta_\eps$. In particular, if $\spt(\pi_0) = \overline{\Theta_\eps}$ for any $\eps>0$, then $\pi_0$ is omni-directional.
\end{remark}

\subsection{Morse-Sard Property}
Below, we prove existence and uniqueness for gradient flow training of ReLU-activated two-layer networks under the assumptions listed above. For technical reasons, we add an assumption which has only been established in full generality in dimension $d=2$. The assumption is only required when discussing the limiting behavior of the gradient flow.

 \begin{enumerate}[label=(M-S), ref=(M-S)]\setcounter{enumi}{0}
\item\label{assumption Sard} {\em Morse-Sard condition}: Assume that 
\[
(\delta_\pi \Risk)(\pi_{t_n}, \cdot) \to g,\qquad \nabla (\delta_\pi \Risk)(\pi_{t_n}, \cdot) \to V
\]
locally uniformly on $\overline\Theta$. Then the restriction of $g$ to the unit sphere $S^{d+1} = \{a^2 + |w|^2 + b^2 =1\}$ has the property that $P^{S^d}_*V= \nabla^{S} g$ does not vanish anywhere on the level set $\{g=t\}$ for Lebesgue-almost every $t$, where $\nabla^S$ denotes the gradient of $g$ tangent to the sphere and $P^{S^d}_*$ the tangent map to the projection onto the sphere.
\end{enumerate}

We discuss the Morse-Sard condition below in Appendix \ref{appendix sard}. Using smoothness, we establish the property in dimension two and provide evidence on the other hand that \ref{assumption Sard} may not be expected to hold in high dimension in full generality. We isolate the point in the proof where the condition is used and where an argument would need to be adapted in order to avoid it.  

A weaker version of the main result holds without this condition.

\section{Evolution of the Parameter Distribution}\label{section main}

\subsection{Existence and Uniqueness}

We first establish that solutions to gradient flow training exist. Assume all conditions outlined above except for \ref{assumption omnidirectional} and \ref{assumption Sard}, which are only required for statements about limiting objects. 

\begin{lemma}
\label{lemma existence}
Let $\pi_0\in \mathcal P_2(\overline\Theta)$. Then there exists a unique solution to the Wasserstein gradient flow \eqref{eq gradient flow}.
\end{lemma}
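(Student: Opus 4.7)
The plan is to convert the continuity equation into a coupled system consisting of a non-autonomous ODE flow on $\overline\Theta$ and the pushforward relation \eqref{eq flow}, and to produce a solution by a Banach fixed-point iteration on the space $C\bigl([0,T],\mathcal P_2(\overline\Theta)\bigr)$ equipped with a suitably weighted 2-Wasserstein metric, following the scheme outlined in Section \ref{section continuity eqn} and using the bound \eqref{eq lipschitz bound}.

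First I would check the regularity of the driving field $V_\pi(\theta) := -\nabla_\theta(\delta_\pi\Risk)(\pi,\theta)$. Since
\[
\nabla_\theta\phi(\theta;x) = \bigl(\sigma(w^Tx+b),\,a\,\sigma'(w^Tx+b)\,x,\,a\,\sigma'(w^Tx+b)\bigr)
\]
is positively one-homogeneous in $\theta$ and dominated by $C(1+|x|)\,|\theta|$, assumption \ref{growth condition on l} combined with \ref{assumption P finite moments} yields a linear growth bound
\[
|V_\pi(\theta)|\le C_\ell\,\sqrt{3}\,|\theta|\int_{\R^d\times\R}(1+|x|)\,\P(\d x\otimes\d y),
\]
uniformly in $\pi$. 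This is exactly the weighted bound under which \cite[Remark 7]{ambrosio2008transport} still applies, so the ODE $\dot X=V_{\pi_t}(X)$ admits a unique global flow and particles do not escape to infinity in finite time. Assumption \ref{assumption RL1} together with boundedness of $\partial_1\ell$ and the linear growth of $\nabla_\theta\phi$ upgrade this to a spatial Lipschitz estimate $\mathrm{Lip}\bigl(V_\pi|_{B_R}\bigr)\le K(R)$; the subtle point is the jump of $\sigma'$ on the hyperplane $\{w^Tx+b=0\}$, but \ref{assumption RL1} is precisely designed to turn the corresponding integrals into Lipschitz functions of $(a,w,b)$.

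Next I would compare the flows generated by two measure-curves $\mu,\tilde\mu\in C([0,T],\mathcal P_2(\overline\Theta))$. A calculation using the above bounds and the elementary inequality $W_2(\delta_\theta,\delta_{\tilde\theta})=|\theta-\tilde\theta|$, together with the representation of $\delta_\pi\Risk$ as an integral in $\pi$, gives
\[
\bigl|V_{\mu_t}(\theta)-V_{\tilde\mu_t}(\theta)\bigr|\le K(|\theta|)\,W_2(\mu_t,\tilde\mu_t).
\]
Combining this with the Lipschitz bound on $V$ in $\theta$ and Grönwall yields, for the corresponding flow maps $X^\mu,X^{\tilde\mu}$,
\[
\bigl|X^\mu(t,\theta)-X^{\tilde\mu}(t,\theta)\bigr|\le C(T,\theta)\int_0^t W_2(\mu_s,\tilde\mu_s)\,\d s.
\]
Pushing forward $\pi_0$ and using the definition of $W_2$ as an infimum over couplings (the diagonal coupling $(X^\mu(t,\cdot),X^{\tilde\mu}(t,\cdot))_\sharp\pi_0$), I obtain $W_2(\Phi(\mu)_t,\Phi(\tilde\mu)_t)^2\le C(T)\int_0^t W_2(\mu_s,\tilde\mu_s)^2\,\d s$. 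Weighting time by $e^{-\lambda t}$ with $\lambda$ large enough makes $\Phi$ a strict contraction, delivering existence of a unique fixed point on $[0,T]$ for every $T>0$; the cone preservation shown in Section \ref{section initial} guarantees that $\spt(\pi_t)\subseteq\overline\Theta$ for all $t$.

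Uniqueness follows by the same estimate: any two solutions of \eqref{eq gradient flow} starting from $\pi_0$ are fixed points of $\Phi$, so a Grönwall argument on $W_2(\pi_t,\tilde\pi_t)$ forces them to coincide. The main technical obstacle throughout is the non-differentiability of $\sigma$ on the hyperplanes $\{w^Tx+b=0\}$, which would otherwise break the Lipschitz estimate on $V$; assumption \ref{assumption RL1} is exactly the mechanism that tames this singularity, while the invariance of the cone $\overline\Theta$ prevents trajectories from entering the genuinely bad locus $\{(w,b)=0,\,a\ne 0\}$ where $V_\pi$ fails to be continuous.
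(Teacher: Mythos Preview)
Your route via a Picard/Banach fixed-point iteration on $C([0,T],\mathcal P_2(\overline\Theta))$ is genuinely different from the paper's. The paper does not set up a contraction on curves of measures; instead it (i) extends $\phi$ smoothly to all of $\R^{d+2}$ by a cut-off in the Minkowski quotient (Lemma~\ref{lemma extension of phi}), obtaining a velocity field that is globally Lipschitz in $\theta$ with constant depending only on $\|\partial_1\ell\|_{L^\infty}$ and tangent to $\partial\Theta$, (ii) verifies that $\Risk$ is $\lambda$-displacement convex on $\mathcal P_2(\R^{d+2})$ by showing $t\mapsto \Risk(\pi_t)$ has Lipschitz derivative along Wasserstein geodesics, (iii) invokes the abstract existence/uniqueness theorem \cite[Theorem 11.2.1]{ambrosio2008gradient} for $\lambda$-convex functionals, and (iv) uses the tangency to confine the flow to $\overline\Theta$ a posteriori. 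The extension to $\R^{d+2}$ is not optional in the paper's scheme: $\overline\Theta$ is not convex, so $\mathcal P_2(\overline\Theta)$ is not geodesically complete and displacement convexity cannot even be formulated there.

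Your contraction argument has a genuine gap. The key estimate
\[
|V_{\mu}(\theta) - V_{\tilde\mu}(\theta)| \le K(|\theta|)\,W_2(\mu,\tilde\mu)
\]
unwinds to a bound on
\[
\int_{\R^d\times\R}\bigl[(\partial_1\ell)(f_{\mu}(x),y) - (\partial_1\ell)(f_{\tilde\mu}(x),y)\bigr]\nabla_\theta\phi(\theta,x)\,\P(\d x\otimes\d y)
\]
in terms of $|f_\mu - f_{\tilde\mu}|$, and this requires $\partial_1\ell$ to be Lipschitz in its first argument. The standing hypotheses \ref{assumption continuity l}--\ref{assumption convexity l} give only continuity and the uniform bound $|\partial_1\ell|\le C_\ell$; no modulus of continuity for $\partial_1\ell$ is assumed. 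Without it, $\Phi$ is not a contraction (nor even obviously continuous in the $W_2$-topology), and neither existence nor uniqueness follows from your scheme. The paper's $\lambda$-convexity route uses only the $\theta$-Lipschitz bound on $\theta\mapsto V_\rho(\theta)$, uniform over bounded weights $\rho$, and does not invoke a Lipschitz estimate of the velocity in the measure argument.
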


The next statement will be useful to understand the behavior of limits as $t\to\infty$.

\begin{lemma}\label{lemma omni-directional}
If $\pi_0 \in \mathcal{P}_2(\overline\Theta)$ is omni-directional and $\pi_t$ denotes the Wasserstein gradient flow starting at $\pi_0$, then $\pi_t$ is omni-directional for all $t>0$.
\end{lemma}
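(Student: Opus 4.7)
The plan is to show that the flow map associated with the gradient flow is a scaling-equivariant homeomorphism, hence sends open cones to open cones; the conclusion then follows from the push-forward representation of $\pi_t$.

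First I would record the fact that the velocity field
\[
V(t,\theta) = -\nabla_\theta(\delta_\pi\Risk)(\pi_t,\theta) = -\int_{\R^d\times\R}(\partial_1\ell)\big(f_{\pi_t}(x),y\big)\,\nabla_\theta\phi(\theta;x)\,\P(\d x\otimes \d y)
\]
is positively one-homogeneous in $\theta$, because $\phi(\theta;x) = a\,\sigma(w^Tx+b)$ is positively two-homogeneous in $\theta = (a,w,b)$. By Section \ref{section continuity eqn} (the linear-growth version of \cite{ambrosio2008transport}, combined with assumption \ref{assumption RL1} ensuring Lipschitz regularity of $V$ on $\overline\Theta\setminus\{0\}$), there is a well-defined flow map $X(t,\cdot):\overline\Theta\to\overline\Theta$ such that $\pi_t = X(t,\cdot)_\sharp \pi_0$. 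The Minkowski-norm preservation computation from Section \ref{section initial} guarantees that $\overline\Theta$ is invariant under $X$, and one-homogeneity gives $V(t,0)=0$, so the origin is fixed.

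Next I would use the one-homogeneity to deduce scaling equivariance. For any fixed $\lambda>0$ and $\theta\in\overline\Theta$, the curves $t\mapsto X(t,\lambda\theta)$ and $t\mapsto \lambda\,X(t,\theta)$ both start at $\lambda\theta$ and solve the ODE $\dot\gamma = V(t,\gamma)$ (in the latter case because $\tfrac{d}{dt}\big(\lambda X(t,\theta)\big) = \lambda V(t,X(t,\theta)) = V(t,\lambda X(t,\theta))$). Uniqueness of solutions to \eqref{eq flow map} forces
\[
X(t,\lambda\theta) = \lambda\,X(t,\theta)\qquad\forall\ \lambda>0,\ \theta\in\overline\Theta.
\]
Combined with the Lipschitz estimate \eqref{eq lipschitz bound} applied both forward and backward in time, $X(t,\cdot)$ is a homeomorphism of $\overline\Theta$ onto itself which commutes with positive dilations; in particular, the image and preimage of every open cone under $X(t,\cdot)$ is again an open cone.

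Finally, given any open cone $C\subseteq\Theta$, the set $X(t,\cdot)^{-1}(C)$ is again an open cone in $\Theta$ (and is nonempty because $C$ is). Omni-directionality of $\pi_0$ then yields
\[
\pi_t(C) = \pi_0\big(X(t,\cdot)^{-1}(C)\big) > 0,
\]
which is exactly the claim. The main point to be careful about is step two — scaling equivariance of $X(t,\cdot)$ — since it relies on uniqueness of the ODE flow, which in turn requires the Lipschitz regularity of $V$ provided by assumption \ref{assumption RL1}; once this is in hand, the rest of the argument is a short bookkeeping exercise with the push-forward identity.
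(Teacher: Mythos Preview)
Your proposal is correct and follows essentially the same route as the paper: use the flow-map representation $\pi_t = X(t,\cdot)_\sharp\pi_0$, establish that $X(t,\cdot)$ is a bi-Lipschitz homeomorphism of $\overline\Theta$ which commutes with positive dilations (via one-homogeneity of the velocity field and ODE uniqueness), and conclude that preimages of open cones are open cones. The paper's write-up is slightly terser and invokes the global Lipschitz bound from Lemma~\ref{lemma extension of phi} rather than phrasing it as Lipschitz on $\overline\Theta\setminus\{0\}$, but the substance is identical.
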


This Lemma is a simpler version of \cite[Theorem 3.3]{chizat2018global}. The proof is based on the flow map representation. Since the activation function is positively two-homogeneous in the network parameters, the flow field $-\nabla(\delta_\pi \Risk)$ is positively one-homogeneous, which means that half-rays move as half-rays and cones are preserved under the flow. Both results also apply directly to explicitly regularized risk functions with suitable homogeneity such as
\[
F_\eps (\pi) = \Risk(\pi) + \eps\,\int_{\overline\Theta} |a|^2+|w|^2 + b^2\,\pi(\d a\otimes \d w \otimes \d b), \qquad\eps\geq0.
\]
The proofs are given in the appendix.

\subsection{Growth of Second Moments}
Recall that we had denoted the second moment of $\pi$ by
\[
N(\pi) := \int_{\overline \Theta} |\theta|^2\,\pi(\d\theta) = \int_{\overline\Theta}a^2 + |w|_{\ell^2}^2 + b^2\,\pi(\d a\otimes \d w \otimes \d b).
\]
Under gradient flow training, $N$ can only grow sublinearly in time.

\begin{lemma}\label{lemma sublinear growth}
If $\pi_t$ evolves by the Wasserstein-gradient flow of $\Risk$, then
\[
N(\pi_t) \leq 2 [N(\pi_0) + \Risk(\pi_0)\,t]\qquad\text{and }\quad
\lim_{t\to \infty} \frac{N(\pi_t)}t = 0.
\]
\end{lemma}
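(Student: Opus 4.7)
The plan is to differentiate $N(\pi_t)$ along the continuity equation, apply Cauchy--Schwarz to the resulting pairing, and then use the standard energy dissipation identity to convert a velocity moment into a time derivative of $\Risk$. Squaring and a second Cauchy--Schwarz in time gives the linear bound, while integrability of $-\Risk'(t)$ on $(0,\infty)$ upgrades this to sublinearity.

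First I would use $g(\theta)=|\theta|^2$ as a test function in the continuity equation \eqref{eq gradient flow} to obtain
\[
\frac{d}{dt}N(\pi_t) = -2\int_{\overline\Theta}\langle \theta, V_t(\theta)\rangle\,\pi_t(\d\theta),\qquad V_t := \nabla_\theta(\delta_\pi\Risk)(\pi_t,\cdot).
\]
The test function is not compactly supported, so a cut-off approximation is required; it is controlled by the fact that $\nabla_\theta\phi(\theta;x)$ is positively one-homogeneous in $\theta$ (Euler applied to the two-homogeneous $\phi$), so by \ref{growth condition on l} and \ref{assumption P finite moments} we have $|V_t(\theta)|\le C(1+|\theta|)$ with $C$ independent of $t$. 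Cauchy--Schwarz combined with the standard dissipation identity $\frac{d}{dt}\Risk(\pi_t)=-\int|V_t|^2\d\pi_t$ then yields
\[
\frac{d}{dt} N(\pi_t) \le 2\sqrt{N(\pi_t)}\,\sqrt{-\Risk'(t)},\qquad\text{equivalently,}\qquad \frac{d}{dt}\sqrt{N(\pi_t)} \le \sqrt{-\Risk'(t)}.
\]

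Integrating in time and applying Cauchy--Schwarz to the resulting integral,
\[
\sqrt{N(\pi_t)} \le \sqrt{N(\pi_0)} + \int_0^t\sqrt{-\Risk'(s)}\,\ds \le \sqrt{N(\pi_0)} + \sqrt{t\,[\Risk(\pi_0)-\Risk(\pi_t)]} \le \sqrt{N(\pi_0)}+\sqrt{t\,\Risk(\pi_0)}.
\]
Squaring and using $(a+b)^2\le 2(a^2+b^2)$ proves the quantitative bound $N(\pi_t)\le 2[N(\pi_0)+\Risk(\pi_0)\,t]$.

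For the sublinear statement I would exploit that $\Risk(\pi_t)\ge 0$ is non-increasing, so $\int_0^\infty(-\Risk'(s))\,\ds=\Risk(\pi_0)-\lim_{t\to\infty}\Risk(\pi_t)<\infty$. Splitting the time integral at an arbitrary $T>0$ and applying Cauchy--Schwarz on each piece gives
\[
\int_0^t\sqrt{-\Risk'(s)}\,\ds \le \int_0^T\sqrt{-\Risk'(s)}\,\ds + \sqrt{t-T}\,\sqrt{\int_T^\infty (-\Risk'(s))\,\ds}.
\]
Dividing by $\sqrt{t}$, letting $t\to\infty$, and then $T\to\infty$ shows the right-hand side is $o(\sqrt{t})$, hence $N(\pi_t)/t\to 0$.

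The main obstacle I expect is justifying the two formal identities rigorously: the use of the non-compactly supported test function $|\theta|^2$, and the energy dissipation identity for $\Risk$ along the gradient flow. Both reduce to a cutoff/approximation argument combined with the uniform linear-growth estimate on $V_t$ derived above, and should be routine within the Wasserstein gradient flow framework recalled in Section \ref{section wasserstein gf}; notably, the non-smoothness of ReLU plays no role here because one only integrates $V_t$ and $|V_t|^2$ against $\pi_t$, and $V_t$ is $\pi_t$-a.e. well-defined under \ref{assumption RL1}.
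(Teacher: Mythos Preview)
Your proposal is correct and follows essentially the same approach as the paper: differentiate $N(\pi_t)$ along the continuity equation, apply Cauchy--Schwarz against the dissipation $-\Risk'(t)$, rewrite as a bound on $\frac{d}{dt}\sqrt{N(\pi_t)}$, then integrate and apply Cauchy--Schwarz in time. The paper justifies the non-compact test function by the same cutoff argument you outline, and handles the sublinear decay by the same splitting-at-$T$ idea (integrating directly on $[T,t]$ rather than splitting $[0,t]$, which is cosmetically different but equivalent).
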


The proof of the Lemma is fairly general and does not require the specific structure of $\phi$, $\ell$ and $\P$. The second moments of $\pi_t$ control the path-norm (or {\em Barron norm}) $\|f_{\pi_t}\|_{\mathcal B}$, where
\[
\|f\|_{\mathcal B} = \inf\left\{\int_{\overline\Theta} |a|\,\big[|w|+|b|\big]\,\pi(\d a\otimes \d w \otimes \d b)\:\bigg|\:\pi \in \mathcal P_2\text{ and }f_\pi = f \:\overline\P-\text{a.e.}\right\}.
\]
Neural networks with low Barron norm are poor approximators for general Lipschitz functions in the $L^2(\P)$-topology if the data distribution $\overline\P$ is truly high-dimensional \cite{approximationarticle}. The slow growth of the Barron norm implies that gradient flows cannot decrease $L^2$-population risk at rates faster than $t^{-4/(d-2)}$ for general target functions $f^*$ \cite{dynamic_cod}.

On the other hand, the key insight of \cite{chizat2018global} is that the homogeneity of $\phi$ implies that if the velocity potentials $\delta_\pi\Risk $ were to converge to a non-trivial limit as $t\to\infty$, it would lead to exponential growth of $N(\pi_t)$.

\begin{lemma}\label{lemma cone argument}
Assume that
\begin{align*}
\delta_\pi\Risk(\pi_{t},\cdot)  &= \int (\partial_1\ell)\big(f_{\pi_{t}}(x),y\big)\, \phi(\cdot,x) \,\P(\d x \otimes \d y)\\
\nabla_\theta\,\delta_\pi\Risk(\pi_{t},\cdot)  &= \int (\partial_1\ell)\big(f_{\pi_{t}}(x),y\big)\,\nabla_\theta \phi(\cdot,x) \,\P(\d x \otimes \d y)
\end{align*}
converge to a function $g$ and a vector field $V = \nabla g$ respectively, locally uniformly on $\overline\Theta$. If $g\not\equiv 0$, then
\[
N(\pi_t) \geq c_0\,e^{c_1t}
\]
for some $c_0, c_1>0$ and $t\gg1$.
\end{lemma}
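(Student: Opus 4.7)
The proof rests on the positive $2$-homogeneity of $\phi(\,\cdot\,,x)$ in $\theta=(a,w,b)$: since $\phi(\lambda\theta,x)=\lambda^2\phi(\theta,x)$ for $\lambda>0$, the velocity potential $\delta_\pi\Risk(\pi,\cdot)$ and hence its locally uniform limit $g$ inherit the same homogeneity, while $V=\nabla g$ is positively $1$-homogeneous. Euler's identity then gives $\langle V(\theta),\theta\rangle=2g(\theta)$.

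\textbf{Step 1 (Radial evolution along trajectories).} Write the flow map $\dot X_t=-\nabla(\delta_\pi\Risk)(\pi_t,X_t)$ guaranteed by Section \ref{section continuity eqn}. Using $2$-homogeneity of $\delta_\pi\Risk(\pi_t,\cdot)$ and Euler,
\[
\frac{d}{dt}\,|X_t|^2 \;=\; -4\,\delta_\pi\Risk(\pi_t,X_t) \;=\; -4\,|X_t|^2\,\delta_\pi\Risk\!\big(\pi_t,X_t/|X_t|\big),
\]
so $\log|X_t|^2$ grows at rate $-4\,\delta_\pi\Risk(\pi_t,\hat\theta_t)$ where $\hat\theta_t=X_t/|X_t|\in S^{d+1}$.

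\textbf{Step 2 (A negative cone for $g$).} The reflection $T(a,w,b)=(-a,w,b)$ satisfies $\phi\circ T\equiv-\phi$, hence $\delta_\pi\Risk(\pi,T\theta)=-\delta_\pi\Risk(\pi,\theta)$ and in the limit $g(T\theta)=-g(\theta)$. Thus if $g\not\equiv0$, there is $\hat\theta_*\in S^{d+1}$ with $g(\hat\theta_*)<0$. By continuity and $2$-homogeneity, pick $c_1>0$ and a compact spherical neighborhood $K\subset S^{d+1}$ of $\hat\theta_*$ such that $g\leq-2c_1$ on $K$; then the open cone $C=\{\,\lambda\hat\theta:\lambda>0,\ \hat\theta\in K\,\}$ satisfies $g(\theta)\leq-2c_1|\theta|^2$ on $C$.

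\textbf{Step 3 (Invariance of the cone under the perturbed flow).} Using $1$-homogeneity of $V$, decompose the dynamics into radial and angular parts; the angular component $\hat\theta_t$ evolves by projected gradient descent of $\delta_\pi\Risk(\pi_t,\cdot)|_{S^{d+1}}$, a time-dependent perturbation of the gradient flow of $g|_{S^{d+1}}$. Shrink $K$ slightly to a compact $K'\subset\mathrm{int}(K)$ with $g\leq-c_1$ on a neighborhood $K''\supset K'$. By hypothesis $\nabla(\delta_\pi\Risk)(\pi_t,\cdot)\to\nabla g$ locally uniformly, so for $t\geq T$ sufficiently large, the perturbed spherical velocity differs from $-\nabla^S g$ by arbitrarily little on $K$. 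Since $\nabla^S g$ points into the sublevel set $\{g\leq-c_1\}$ along the boundary $\partial K''\cap\{g=-c_1\}$ (and $K'$ lies strictly inside), a standard Lyapunov argument (the decay of $\delta_\pi\Risk(\pi_t,\hat\theta_t)$ along trajectories, up to an error controlled by the uniform convergence) shows that any trajectory with $\hat\theta_T\in K'$ satisfies $\hat\theta_t\in K$ for all $t\geq T$, and consequently $\delta_\pi\Risk(\pi_t,\hat\theta_t)\leq-c_1$ for $t\geq T$.

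\textbf{Step 4 (Exponential growth of $N(\pi_t)$).} For such a trajectory Step 1 yields $|X_t(\theta_0)|^2\geq|X_T(\theta_0)|^2\,e^{4c_1(t-T)}$. By Lemma \ref{lemma omni-directional}, $\pi_T$ is omni-directional, so the cone $C'$ corresponding to $K'$ has $\int_{C'}|\theta|^2\,\pi_T(d\theta)>0$. Pushing back by the flow map and integrating,
\[
N(\pi_t)\;\geq\;\int_{X_T^{-1}(C')} |X_t|^2\,\pi_0(d\theta_0)\;\geq\;e^{4c_1(t-T)}\int_{C'}|\theta|^2\,\pi_T(d\theta)\;=:\;c_0\,e^{c_1't},
\]
which is the claimed exponential lower bound.

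\textbf{Main obstacle.} The conceptually clear step is Step 1; the technical heart is Step 3, making the informal claim ``approximate gradient descent on the sphere preserves a sublevel set of $g$'' rigorous. One must use both hypotheses (uniform convergence of $\delta_\pi\Risk$ \emph{and} its gradient) together with a careful choice of sublevel sets and error thresholds, and the fact that $\nabla^S g$ is nonzero on a neighborhood of the relevant portion of $\{g=-c_1\}$. If this finer geometric control is not available on all of $K$, a fallback is to average: one integrates $-4\delta_\pi\Risk(\pi_t,\hat\theta_t)$ along trajectories over the positive-mass cone and uses Jensen together with the decrease of $\delta_\pi\Risk$ along the approximate spherical flow to get the same exponential lower bound on $N(\pi_t)$.
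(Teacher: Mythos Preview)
Your proposal follows essentially the same route as the paper's proof: find a cone on which $g(\theta)\le -\eps|\theta|^2$, argue that this cone is forward-invariant under the flow for large $t$, then use Euler's identity $\langle\theta,\nabla(\delta_\pi\Risk)\rangle=2\,\delta_\pi\Risk$ to obtain $\frac{d}{dt}|X_\theta(t)|^2\ge \eps|X_\theta(t)|^2$ on trajectories that remain in the cone, and finish with omni-directionality.

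The gap is in Step~3. To pass from ``$\nabla(\delta_\pi\Risk)(\pi_t,\cdot)\to\nabla g$ locally uniformly'' to ``the perturbed spherical flow does not exit the cone'' you need the inward normal derivative of $g$ on the spherical boundary of the cone to be \emph{uniformly bounded away from zero}, not merely nonnegative. Only then does a small uniform perturbation of the velocity keep trajectories inside. You state this requirement yourself (``the fact that $\nabla^S g$ is nonzero on a neighborhood of the relevant portion of $\{g=-c_1\}$'') but provide no mechanism to guarantee it: for an arbitrary value $-c_1$ the level set $\{g|_{S^{d+1}}=-c_1\}$ may contain spherical critical points of $g$, at which $\nabla^S g=0$ and the invariance argument breaks down. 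The paper closes this gap by invoking the Morse--Sard assumption \ref{assumption Sard}: almost every $t$ is a regular value of $g|_{S^{d+1}}$, so one can choose $t\in(\eps,2\eps)$ with $\nabla^S g\neq 0$ everywhere on $\{g/|\theta|^2=-t\}$ and take the cone to be $C=\{g/|\theta|^2<-t\}$. Then $\langle\nabla g,\nu_C\rangle>0$ strictly on the compact set $\partial C\cap S^{d+1}$, and the uniform convergence of gradients gives the same strict inequality for $\nabla(\delta_\pi\Risk)(\pi_t,\cdot)$ once $t\ge T_0$.

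Your proposed fallback (averaging, Jensen, ``decrease of $\delta_\pi\Risk$ along the approximate spherical flow'') does not rescue the argument: along trajectories one has $\frac{d}{dt}\delta_\pi\Risk(\pi_t,\hat\theta_t)=\big(\partial_t\delta_\pi\Risk\big)(\pi_t,\hat\theta_t)-|\nabla^S(\delta_\pi\Risk)|^2$, and the explicit time derivative is not controlled by the hypotheses, so there is no monotonicity to exploit. The Morse--Sard type input is genuinely needed here (and is the sole place in the paper where \ref{assumption Sard} is used).
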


We give both proofs in the appendix for the reader's convenience. The proof of Lemma \ref{lemma cone argument} is the only point in the document where the assumptions \ref{assumption omnidirectional} and \ref{assumption Sard} are used.

\subsection{Main Results: Lipschitz Loss}
We can now characterize the convergence of Wasserstein gradient flows for the risk functional $\Risk$ with omni-directional initial conditions entirely. We consider the {\em simultaneous $\omega$-limit set}
\[
\Omega^{lim}: = \{(g,V)\:\big|\:\exists\ t_n\to \infty\text{ s.t. }\delta_\pi\Risk (\pi_{t_n},\cdot)\to g, \: \nabla(\delta_\pi\Risk )(\pi_{t_n},\cdot)\to V\text{ locally uniformly}\}.
\]

\begin{theorem}\label{theorem main}
Assume the above conditions \ref{assumption continuity l}, \ref{growth condition on l}, \ref{assumption convexity l}, \ref{assumption P Borel}, \ref{assumption P finite moments}, \ref{assumption density in L1}, \ref{assumption RL1}, \ref{assumption growth at infinity}, \ref{assumption finite risk}, \ref{assumption initial cone}, \ref{assumption omnidirectional}, \ref{assumption Sard}. Then
\begin{enumerate}
\item $\Omega^{lim}$ is not empty and
\item $\lim_{t\to\infty} \Risk(\pi_t) = \inf_{\pi}\Risk(\pi)$ if and only if $\Omega^{lim}$ consists of only one element.
\end{enumerate}
If  $\Omega^{lim}$ consists of only one element, then that element is $\lim_{t\to\infty}(\delta_\pi \Risk)(\pi_t;\theta) \equiv 0$.
\end{theorem}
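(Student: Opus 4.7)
The proof would combine four ingredients: Arzel\`a--Ascoli compactness for (1); the growth dichotomy in Lemmas~\ref{lemma sublinear growth} and~\ref{lemma cone argument}, which forces any full limit $g$ to vanish; convexity of $\Risk$ together with the positive 2-homogeneity of $\phi$ to compare $\Risk(\pi_t)$ against test measures; and a dominated-convergence argument on data space for the converse direction.

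First I would establish (1) and identify the unique element in case (2). Assumptions~\ref{growth condition on l} and~\ref{assumption RL1} yield uniform-in-$t$ boundedness and equicontinuity of $\delta_\pi\Risk(\pi_t,\cdot)$ and $\nabla\delta_\pi\Risk(\pi_t,\cdot)$ on every compact subset of $\overline\Theta$, so Arzel\`a--Ascoli makes the family precompact in $C^0_{loc}(\overline\Theta)^2$ and $\Omega^{lim}\neq\emptyset$. If $\Omega^{lim}=\{(g,V)\}$, a precompact family with a single accumulation point converges to that point, so $\delta_\pi\Risk(\pi_t,\cdot)\to g$ and $\nabla\delta_\pi\Risk(\pi_t,\cdot)\to V$ as $t\to\infty$ (not only along subsequences); by Lemma~\ref{lemma cone argument}, a nonzero $g$ would then force $N(\pi_t)\geq c_0 e^{c_1 t}$, contradicting Lemma~\ref{lemma sublinear growth}. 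Hence the unique element must be $(0,0)$.

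For ``$\Omega^{lim}$ singleton $\Rightarrow \Risk(\pi_t)\to\inf\Risk$'' I would start from the convexity inequality (from \ref{assumption convexity l} and linearity of $\pi\mapsto f_\pi$): for any compactly supported probability measure $\sigma$ on $\overline\Theta$,
\[
\Risk(\pi_t) \leq \Risk(\sigma) - \int \delta_\pi\Risk(\pi_t,\theta)\,\sigma(\d\theta) + \int \delta_\pi\Risk(\pi_t,\theta)\,\pi_t(\d\theta).
\]
The last term is the main obstacle, since $\pi_t$ has non-compact support and locally uniform decay of $\delta_\pi\Risk$ does not control it directly. The key move is Euler's identity $\theta\cdot\nabla_\theta\phi(\theta,x)=2\phi(\theta,x)$ from positive 2-homogeneity of $\phi$ in $\theta$; testing the continuity equation~\eqref{eq gradient flow} against $|\theta|^2$ yields
\[
\int \delta_\pi\Risk(\pi_t,\theta)\,\pi_t(\d\theta) = -\tfrac{1}{4}\,\tfrac{d}{dt} N(\pi_t).
\]
Time-averaging the convexity inequality over $[T/2,T]$ converts this term into a boundary quotient $\tfrac{1}{2T}\bigl(N(\pi_{T/2})-N(\pi_T)\bigr)$, which vanishes as $T\to\infty$ by Lemma~\ref{lemma sublinear growth}; the $\sigma$-term averages to zero by local uniform convergence of $\delta_\pi\Risk$ to $0$ on $\spt(\sigma)$. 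Monotonicity of $\Risk(\pi_t)$ under the gradient flow (standard energy dissipation) then gives $\Risk(\pi_T)\leq\Risk(\sigma)+o(1)$, and density of compactly supported test measures via~\ref{assumption density in L1} together with the 2-homogeneity of $\phi$ yields $\lim\Risk(\pi_t)=\inf\Risk$.

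For the converse, let $(g,V)\in\Omega^{lim}$ along some $t_n\to\infty$. Since $L_x(f_{\pi_{t_n}}(x))\geq\inf_\alpha L_x(\alpha)$, the convergence $\Risk(\pi_{t_n})\to\inf\Risk$ upgrades to $L^1(\overline\P)$-convergence, and a further subsequence converges pointwise $\overline\P$-a.e. Coercivity of $L_x$ (assumption~\ref{assumption growth at infinity}) keeps $f_{\pi_{t_{n_k}}}(x)$ bounded and forces it into the minimizer set $M_x$; continuity of $\nabla L_x$ then gives $\nabla L_x(f_{\pi_{t_{n_k}}}(x))\to 0$ for $\overline\P$-a.e.\ $x$. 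Dominated convergence applied to
\[
\delta_\pi\Risk(\pi_{t_{n_k}},\theta) = \int \nabla L_x\bigl(f_{\pi_{t_{n_k}}}(x)\bigr)\,\phi(\theta,x)\,\overline\P(\d x),
\]
with dominating function $C_\ell\,|\phi(\theta,\cdot)|$ that is integrable by~\ref{assumption P finite moments} and bounded uniformly in $\theta$ on compact sets, yields $\delta_\pi\Risk(\pi_{t_{n_k}},\cdot)\to 0$ locally uniformly. Since the original subsequence also converges to $g$, uniqueness of limits forces $g\equiv 0$ and $V\equiv 0$; every element of $\Omega^{lim}$ equals $(0,0)$. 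The main obstacle throughout is the third-term control in the forward direction, which is exactly why recasting $\int\delta_\pi\Risk\,d\pi_t$ as a total time derivative of $N(\pi_t)$ via homogeneity is essential.
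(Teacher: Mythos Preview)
Your proposal is correct. Step 1 and the converse direction match the paper's proof essentially line for line (Arzel\`a--Ascoli for compactness; $L^1\to$ a.e.\ convergence, coercivity \ref{assumption growth at infinity}, and dominated convergence for the converse). The forward direction, however, takes a genuinely different route. Both arguments rest on the same homogeneity identity $\int\delta_\pi\Risk(\pi_t,\theta)\,\pi_t(\d\theta)=-\tfrac14\,\tfrac{d}{dt}N(\pi_t)$, but the paper works on the \emph{data side}: it expands $\int(\partial_1\ell)(f_{\pi_t},y)\,f_{\pi_t}\,\d\P$ by splitting $f_{\pi_t}=(f_{\pi_t}-f^*)+f^*$, applies convexity to the first piece to get the differential inequality $-\tfrac{d}{dt}N(\pi_t)\geq 4[\Risk(\pi_t)-\widetilde\Risk(f^*)]+4\int(\partial_1\ell)f^*\,\d\P$, and then uses the density assumption \ref{assumption density in L1} together with $f^*\in L^1(\overline\P)$ to kill the residual term. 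You instead work on the \emph{parameter side}: you compare $\pi_t$ directly against compactly supported test measures $\sigma$ via the convexity inequality for $\Risk$, and eliminate the problematic $\pi_t$-integral by time-averaging it into a boundary term $\tfrac{1}{2T}(N(\pi_{T/2})-N(\pi_T))$ controlled by Lemma~\ref{lemma sublinear growth}. Your route is somewhat more direct in that it never manipulates $f^*$ explicitly and does not need to pass the weak convergence $\delta_\pi\Risk\to 0$ through the density statement for $f^*$; the paper's route, on the other hand, yields the sharper pointwise differential inequality for $N$, which makes the mechanism (linear decrease of $N$ versus $N\geq 0$) more transparent.
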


The first statement of the Theorem does not require \ref{assumption omnidirectional} and \ref{assumption Sard}. Verifying that there exists only a single element in $\Omega^{lim}$ is non-trivial. Generally uniqueness is proved by showing that the limit satisfies an equation which can be studied separately. Even assuming that there exists a limiting measure $\pi_\infty$ such that $\pi_t\to \pi_\infty$, the natural `zero dissipation in the limit' condition
\[
0 = \int_{\overline\Theta}|V|^2\,\pi_\infty(\d\theta) = \int_{\overline \Theta} \big|\nabla (\delta_\pi\Risk)\big|^2(\pi_\infty,\theta)\,\pi_\infty(\d\theta)
\]
only shows that $g(\theta) = 0$ for $\pi_\infty$-almost all $\theta$. This is significantly weaker than {\em every} $\theta$ if $\pi_\infty$ concentrates on a small set. Functions satisfying the zero-dissipation property are stationary points of the flow, and there are many of them which are not the global minimum. Not assuming the existence of a limit $\pi_\infty$, linearization becomes highly involved since the map $\pi\mapsto f_\pi$ is far from injective. This makes the question of uniqueness non-trivial. Despite these obstacles, we believe the result to be a first step towards a general convergence theory. Without assumptions \ref{assumption omnidirectional}, \ref{assumption Sard}, a weaker result holds.

\begin{theorem}\label{theorem main weak}
Assume that \ref{assumption continuity l}, \ref{growth condition on l}, \ref{assumption convexity l}, \ref{assumption P Borel}, \ref{assumption P finite moments}, \ref{assumption density in L1}, \ref{assumption RL1}, \ref{assumption growth at infinity}, \ref{assumption finite risk} and \ref{assumption initial cone} hold. Then
\begin{enumerate}
\item $\Omega^{lim}$ is not empty and
\item $\lim_{t\to\infty} \Risk(\pi_t) = \inf_{\pi}\Risk(\pi)$ if and only if $|\Omega^{lim}|$ contains only the function $g=0$.
\end{enumerate}
\end{theorem}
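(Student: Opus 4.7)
The plan is three-phase: first establish compactness of the family of velocity potentials as $t\to\infty$ (yielding (1)), then invoke that compactness for the two directions of (2) -- the forward implication via convex first-order analysis and the reverse via the $2$-homogeneity of $\phi$ in $\theta$ combined with the differential identity for $N(\pi_t)$.

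For \emph{compactness}, the uniform bound $|\partial_1\ell|\le C_\ell$ from \ref{growth condition on l} and the linear-growth estimate $|\phi(\theta,x)|\le |a|(|w||x|+|b|)$, together with the first moment assumption \ref{assumption P finite moments}, produce a locally uniform $L^\infty$ bound on $(t,\theta)\mapsto\delta_\pi\Risk(\pi_t,\theta)$. A parallel argument, using \ref{assumption RL1} to control the variation of $1_{\{w^Tx+b>0\}}$ and so tame the singularity of $\sigma'$, yields a locally uniform Lipschitz bound on $\nabla_\theta\delta_\pi\Risk(\pi_t,\cdot)$. Arzel\`a--Ascoli then keeps the pair $(\delta_\pi\Risk(\pi_t,\cdot),\nabla_\theta\delta_\pi\Risk(\pi_t,\cdot))$ in a compact subset of $C^0_{loc}(\overline\Theta)\times C^0_{loc}(\overline\Theta;\R^{d+2})$ for every $t\ge 0$; extracting a convergent subsequence proves (1) and supplies the equicontinuity used in (2).

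For the \emph{forward implication}, I would rewrite
\[
\delta_\pi\Risk(\pi_t,\theta) = \int_{\R^d} L_x'(f_{\pi_t}(x))\,\phi(\theta,x)\,\overline\P(\d x),
\]
where $L_x'(\alpha)=\int\partial_1\ell(\alpha,y)\,\P^x(\d y)$ is continuous and, by Lemma \ref{lemma measurable}, vanishes on $M_x=\argmin L_x$. If $\Risk(\pi_t)\to\widetilde\Risk(f^*)$ then $L_x(f_{\pi_t})\to L_x(f^*)$ in $L^1(\overline\P)$; coercivity \ref{assumption growth at infinity} forces $\dist(f_{\pi_t}(x),M_x)\to 0$ in $\overline\P$-measure, and continuity of $L_x'$ upgrades this to $L_x'(f_{\pi_t}(x))\to 0$ in measure. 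Since $|L_x'|\le C_\ell$ and $|\phi(\theta,\cdot)|\le C_\theta(1+|x|)\in L^1(\overline\P)$, dominated convergence gives $\delta_\pi\Risk(\pi_t,\theta)\to 0$ pointwise, and the equicontinuity from the first step promotes this to locally uniform convergence. So every element of $\Omega^{lim}$ equals zero.

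For the \emph{reverse implication}, assume $\Omega^{lim}=\{0\}$. Compactness in the metrizable space $C^0_{loc}$ together with uniqueness of subsequential limits upgrades this to a full limit $\delta_\pi\Risk(\pi_t,\cdot)\to 0$ locally uniformly. Since $\phi(\cdot,x)$ is positively $2$-homogeneous on $\Theta$, so is $\delta_\pi\Risk(\pi_t,\cdot)$, so by Euler's identity and the continuity equation
\[
\tfrac{d}{dt}N(\pi_t) = -2\int\langle\theta,\nabla_\theta\delta_\pi\Risk(\pi_t,\theta)\rangle\,\pi_t(\d\theta) = -4\int\delta_\pi\Risk(\pi_t,\theta)\,\pi_t(\d\theta).
\]
Suppose for contradiction $\Risk(\pi_t)\to R_\infty>\widetilde\Risk(f^*)$. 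By \ref{assumption density in L1} I pick a compactly supported $\pi^*$ with $\Risk(\pi^*)<R_\infty-\eps$ for some fixed $\eps>0$. The one-sided convex estimate $L_x(f_{\pi_t})-L_x(f_{\pi^*})\le L_x'(f_{\pi_t})(f_{\pi_t}-f_{\pi^*})$, integrated against $\overline\P$ and recast via Fubini, gives
\[
\int\delta_\pi\Risk(\pi_t,\theta)\,\pi_t(\d\theta) \;\ge\; \Risk(\pi_t)-\Risk(\pi^*) + \int\delta_\pi\Risk(\pi_t,\theta)\,\pi^*(\d\theta).
\]
Compact support of $\pi^*$ makes the last integral $o(1)$, so the left-hand side is eventually $\ge\eps/2$, giving $\tfrac{d}{dt}N(\pi_t)\le -2\eps$ for large $t$ and the contradiction $N(\pi_t)\to-\infty$. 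The one genuinely delicate step is the locally uniform Lipschitz bound on $\nabla_\theta\delta_\pi\Risk$ in the compactness paragraph, since ReLU fails to be differentiable across $\{w^Tx+b=0\}$; this is precisely what assumption \ref{assumption RL1} is tailored to enable. Once compactness is in hand, both directions reduce to convex first-order analysis combined with the $2$-homogeneity identity.
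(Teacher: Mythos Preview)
Your proposal is correct and follows essentially the same three-part architecture as the paper: compactness via the uniform Lipschitz bound on $\nabla_\theta(\delta_\pi\Risk)$ furnished by \ref{assumption RL1} (the paper packages this as Lemma~\ref{lemma extension of phi}) together with Arzel\`a--Ascoli; the forward implication via $L_x(f_{\pi_t})\to L_x(f^*)$ in $L^1(\overline\P)$, coercivity \ref{assumption growth at infinity}, and dominated convergence; and the reverse implication via the Euler identity $\langle\theta,\nabla_\theta(\delta_\pi\Risk)\rangle = 2\,\delta_\pi\Risk$ combined with the first-order convexity inequality for $\ell$.

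The one noteworthy difference is in the reverse implication. The paper compares $f_{\pi_t}$ directly against the Bayes-optimal $f^*$, which produces the residual term $\int(\partial_1\ell)(f_{\pi_t},y)\,f^*\,\P$ and then invokes the $L^1$-density assumption \ref{assumption density in L1} together with $f^*\in L^1(\overline\P)$ (assumption \ref{assumption minimizer L1}) to kill it. You instead compare against $f_{\pi^*}$ for a compactly supported $\pi^*$ with nearly minimal risk; your residual $\int\delta_\pi\Risk(\pi_t,\theta)\,\pi^*(\d\theta)$ then vanishes immediately from the locally uniform convergence already in hand. Your variant is slightly cleaner in that it sidesteps any explicit appeal to \ref{assumption minimizer L1}, while the paper's version has the virtue of making the role of the Bayes predictor $f^*$ transparent. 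Both routes rest on the same convex inequality and the same homogeneity-driven differential identity for $N(\pi_t)$.
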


Having found a characterization for risk converging to zero for omni-directional initial conditions, we now turn our attention to the question of whether we can find a minimizer of risk. The corollary is almost immediate.

\begin{corollary}\label{corollary minimizer 1}
If in addition to the assumptions of Theorem \ref{theorem main} we assume that there exist a sequence of times $t_n\to\infty$ and a measure $\pi_\infty$ on $\Theta$ such that $\pi_{t_n} \to \pi_\infty$ in 2-Wasserstein distance and $|\Omega^{lim}|=1$, then $f_{\pi_\infty} = f^*$, i.e.\ $\pi_\infty$ is a risk minimizing measure. 
\end{corollary}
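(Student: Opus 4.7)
The plan is to combine Theorem \ref{theorem main} with the continuity of $\Risk$ along 2-Wasserstein-convergent sequences. Since $|\Omega^{lim}|=1$ by hypothesis, Theorem \ref{theorem main} yields $\Risk(\pi_{t_n}) \to \widetilde\Risk(f^*)$, the minimum Bayes risk. It therefore suffices to show $\Risk(\pi_{t_n}) \to \Risk(\pi_\infty)$ along the subsequence, whence $\Risk(\pi_\infty) = \widetilde\Risk(f^*)$. By the characterization in Section \ref{section risk functional}, this equality is equivalent to $f_{\pi_\infty}(x) \in M_x$ for $\overline\P$-almost every $x$, i.e.\ $\pi_\infty$ is a risk-minimizing measure; under strict convexity of $L_x$ (so that $M_x = \{f^*(x)\}$) this reduces to $f_{\pi_\infty} = f^*$ almost everywhere.

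To establish the continuity of $\Risk$, first note that on the cone $\overline\Theta$ the constraint $|a|\leq\sqrt{|w|^2+b^2}\leq|\theta|$ gives the quadratic bound
\[
|\phi(\theta;x)| \leq |a|\bigl(|w||x|+|b|\bigr) \leq |\theta|^2\sqrt{1+|x|^2},
\]
so $\phi(\cdot;x)$ is continuous on $\overline\Theta$ of at most quadratic growth. Since 2-Wasserstein convergence is equivalent to weak convergence together with convergence of second moments, the standard criterion (see e.g.\ \cite{villani2008optimal, ambrosio2008gradient}) yields $\int g\,d\pi_{t_n}\to\int g\,d\pi_\infty$ for any continuous $g$ on $\overline\Theta$ of at most quadratic growth. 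Applied with $g = \phi(\cdot;x)$ for each fixed $x$, this gives the pointwise convergence $f_{\pi_{t_n}}(x) \to f_{\pi_\infty}(x)$.

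The passage to convergence of $\Risk$ is then Lebesgue dominated convergence: \ref{growth condition on l} gives $|\ell(f_{\pi_{t_n}}(x),y) - \ell(f_{\pi_\infty}(x),y)| \leq C_\ell\,|f_{\pi_{t_n}}(x) - f_{\pi_\infty}(x)| \to 0$ pointwise, while integrating the bound on $\phi$ yields $|f_\pi(x)|\leq N(\pi)\sqrt{1+|x|^2}$ and hence the uniform majorant
\[
|\ell(f_{\pi_{t_n}}(x),y)| \leq \ell(0,y) + C_\ell\Bigl(\sup_n N(\pi_{t_n})\Bigr)\sqrt{1+|x|^2},
\]
which is $\P$-integrable by \ref{assumption finite risk} and \ref{assumption P finite moments}, using $\sup_n N(\pi_{t_n})<\infty$ as a consequence of $W_2$-convergence. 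The corollary is genuinely `almost immediate': no substantial obstacle arises in the argument itself, and the real work has been done in Theorem \ref{theorem main} and in \emph{verifying} the hypothesis that a $W_2$-limit $\pi_\infty$ exists -- something not guaranteed by the compactness of $\Omega^{lim}$ alone, since uniqueness of the velocity-potential limit does not in general pull back to uniqueness of the parameter-distribution limit.
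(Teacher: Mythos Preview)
Your argument is correct and essentially follows the paper's approach: both rely on the fact that $\phi(\cdot;x)$ is continuous with at most quadratic growth on $\overline\Theta$, so that $W_2$-convergence $\pi_{t_n}\to\pi_\infty$ implies pointwise convergence $f_{\pi_{t_n}}(x)\to f_{\pi_\infty}(x)$ (this is exactly \cite[Theorem 6.9]{villani2008optimal}, which the paper invokes). The only cosmetic difference is that the paper concludes directly $f_{\pi_\infty}=\lim_n f_{\pi_{t_n}}=f^*$ by combining this with the pointwise convergence $f_{\pi_{t_n}}\to f^*$ established in Step~3 of the proof of Theorem~\ref{theorem main}, whereas you pass through continuity of $\Risk$ via dominated convergence and then invoke the characterization $\Risk(\pi_\infty)=\widetilde\Risk(f^*)\Leftrightarrow f_{\pi_\infty}(x)\in M_x$ a.e.; both routes are equivalent and equally short.
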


The existence of a suitable subsequence is guaranteed if the $p$-th moments of $\pi_t$ remain uniformly bounded for some $p>2$. A version of Corollary \ref{corollary minimizer 1} also holds under the same assumptions ad Theorem \ref{theorem main weak}. Under a weaker condition, we obtain a weaker statement.

\begin{corollary}\label{corollary minimizer 2}
If in addition to the assumptions of Theorem \ref{theorem main} we assume that the second moments $N(\pi_t)$ remain uniformly bounded (at least along a subsequence of times $t_n\to\infty$) and $|\Omega^{lim}|=1$, there exists a measure $\pi_\infty\in\mathcal P$ such that $f_{\pi_\infty}=f^*$ (i.e.\ a risk minimizing measure). 
\end{corollary}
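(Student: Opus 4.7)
The plan is to exploit the positive $2$-homogeneity of $\phi(\theta,x)$ in $\theta$ to reduce the question to weak convergence on the compact spherical slice $K := S^{d+1}\cap\overline\Theta$, where compactness is automatic. Under the hypothesis $|\Omega^{lim}|=1$, Theorem~\ref{theorem main} already supplies $\Risk(\pi_{t_n}) \to \Risk^* := \inf_\pi \Risk(\pi)$, so the remaining task is to produce a measure $\pi_\infty \in \mathcal P_2(\overline\Theta)$ realizing this infimum.

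First, pass to the subsequence along which $N(\pi_{t_n}) \le C$. Writing $\theta = r\hat\theta$ with $r=|\theta|$ and $\hat\theta \in K$, and using $\phi(r\hat\theta, x) = r^2 \phi(\hat\theta, x)$, introduce the directional second-moment measure
\[
\sigma_n := T_\sharp\bigl(|\theta|^2 \, \pi_{t_n}\bigr), \qquad T(\theta) = \theta/|\theta|,
\]
so that $\sigma_n$ is a positive finite Radon measure on the compact space $K$ with total mass $\sigma_n(K) = N(\pi_{t_n}) \le C$, and
\[
f_{\pi_{t_n}}(x) = \int_K \phi(\hat\theta,x)\,\sigma_n(d\hat\theta).
\]
By weak-$*$ compactness in the space of finite Radon measures on the compact set $K$, extract a subsequence $\sigma_n \rightharpoonup \sigma_\infty$. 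Since $\phi(\cdot, x)$ is continuous on $K$ for each $x$, this yields the pointwise limit $f_{\pi_{t_n}}(x) \to \tilde f(x) := \int_K \phi(\hat\theta, x)\,\sigma_\infty(d\hat\theta)$. Each $f_{\pi_{t_n}}$ is $N(\pi_{t_n})$-Lipschitz with $|f_{\pi_{t_n}}(0)| \le N(\pi_{t_n})/2$, hence uniformly bounded by $C(1+|x|)$; dominated convergence against the integrable bound arising from \ref{assumption P finite moments} and \ref{assumption finite risk} then gives $\Risk(\pi_{t_n}) \to \widetilde\Risk(\tilde f)$, so $\widetilde\Risk(\tilde f) = \Risk^*$.

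It remains to realize $\tilde f$ as $f_{\pi_\infty}$ for some $\pi_\infty \in \mathcal P_2(\overline\Theta)$. Set $c := \sigma_\infty(K)$. If $c=0$ take $\pi_\infty := \delta_0$, which gives $f_{\pi_\infty}\equiv 0 = \tilde f$. Otherwise let $\nu := \sigma_\infty/c$ and define $\pi_\infty$ as the push-forward of $\nu$ under the radial scaling $\hat\theta \mapsto \sqrt{c}\,\hat\theta$. Since $\overline\Theta$ is a cone containing $K$, $\pi_\infty$ is a probability measure supported on $\overline\Theta$ with $N(\pi_\infty)=c<\infty$, and positive $2$-homogeneity gives
\[
f_{\pi_\infty}(x) = \int \phi(\sqrt{c}\,\hat\theta, x)\,\nu(d\hat\theta) = c \int \phi(\hat\theta, x)\,\nu(d\hat\theta) = \int \phi(\hat\theta, x)\,\sigma_\infty(d\hat\theta) = \tilde f(x),
\]
so $\Risk(\pi_\infty)=\widetilde\Risk(\tilde f)=\Risk^*$, and hence $f_{\pi_\infty}=f^*$ $\overline\P$-almost everywhere.

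The main obstacle is that a uniform bound on $N(\pi_{t_n})$ is strictly weaker than $W_2$-convergence: mass in the $\pi_{t_n}$ sequence could carry substantial second moment out to infinity without being detected by narrow convergence, so the direct passage to a limit at the level of the parameter measures used in Corollary~\ref{corollary minimizer 1} is unavailable. Working instead with the spherical push-forwards $\sigma_n$ circumvents the difficulty, because any potential escape of second moment in $\pi_{t_n}$ is absorbed into the finite total mass of $\sigma_n$ on the compact set $K$, and the positive $2$-homogeneity of $\phi$ in $\theta$ ensures that $f_\pi$ depends on $\pi$ solely through $\sigma_\pi$.
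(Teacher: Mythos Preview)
The proposal is correct and follows essentially the same route as the paper: push forward the weighted measures $|\theta|^2\cdot\pi_{t_n}$ to the compact spherical slice $S^{d+1}\cap\overline\Theta$, extract a weak-$*$ limit $\sigma_\infty$, and then undo the projection by the dilation $\hat\theta\mapsto\sqrt{\sigma_\infty(K)}\,\hat\theta$ to recover a probability measure $\pi_\infty$ with $f_{\pi_\infty}=f^*$. The only cosmetic difference is that you verify $\widetilde\Risk(\tilde f)=\Risk^*$ via an explicit dominated-convergence argument, whereas the paper invokes the almost-everywhere convergence $f_{\pi_{t_n}}\to f^*$ already obtained inside the proof of Theorem~\ref{theorem main}.
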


Again, there is a version of Corollary \ref{corollary minimizer 2} under the conditions of Theorem \ref{theorem main weak}. The measures $\pi_{t_n}$ may not have a limit in 2-Wasserstein distance, but we can explicitly construct $\pi_\infty$ from $\pi_{t_n}$ by a reparametrization argument. If the second moments of $\pi_t$ remain uniformly bounded in time, there exists a subsequence $\pi_{t_n}$ which converges to a limit in $p$-Wasserstein distance for all $p<2$. This is not sufficient since the map $\pi\mapsto f_\pi$ is not continuous in this topology, as the following example shows:
\[
\pi_n := \frac1{n}\,\delta_{\theta = \sqrt{n}e_1} + \left(1-\frac1n\right)\,\delta_{\theta=0}, \quad \pi_\infty = \delta_{\theta=0}, \qquad f_{\pi_n} \equiv \phi(e_1,\cdot)\neq 0 = f_{\pi_\infty}.
\]

\subsection{Main Results: Smooth Loss and Bounded Data}\label{section main general loss}

The previous results can be used to prove a different version of the main theorems in which a regularity assumption is shifted from the loss function to the data distribution. We make the following observation: If $\overline\P$ is supported on $B_R(0)$ for some $R>0$, then
\[
\|f_{\pi_t}\|_{L^\infty(\overline\P)} \leq (1+R)\,N(\pi_t) \leq 2\,(1+R)\big[N(\pi_0)+ \Risk(\pi_0)\,t\big]
\]
by Lemma \ref{lemma sublinear growth}. 
Assume further that $|y|\leq R$ $\P$-almost surely and that the loss function $\ell$ satisfies
\[
\sup_{|y|\leq S, \:|y'|\leq R} \big[|\ell| + |\partial_1\ell|\big](y,y') < \infty
\]
for any $S<\infty$. We consider the modified loss function
\[
\ell_S(y, y') = \begin{cases} \ell(y,y') & |y|\leq S\\
	\ell(S, y') + (\partial_1\ell)(S,y')\cdot (y-S) & y>S\\
	\ell(-S,y') + (\partial_1\ell)(-S,y')\cdot (y+S) & y<-S
	\end{cases}
\]
for $S>0$ and the modified risk functional
\[
\Risk_S(\pi) = \int_{\R^d\times\R} \ell_S(f_\pi(x),y)\,\P(\d x\otimes \d y).
\]
Then
\begin{enumerate}
\item $\ell_S$ satisfies all conditions previously imposed and
\item $\Risk_S(\pi) = \Risk(\pi)$ for all $\pi$ such that $N(\pi)\leq\frac{S}{1+R}$.
\end{enumerate}
In particular, the gradient flow $\pi_t^S$ of $\Risk_S$ starting at a fixed measure $\pi_0$ independent of $S$ exists and $\pi_t^S = \pi_t^{S'}$ describes the gradient flow of $\Risk$ for $t< {c\,\min\{S,S'\}}/{(1+R)}$. Note that the growth rate of $N$ does not depend on $S$. Thus we may take $S$ to infinity to find that the gradient flow $\pi_t$ of $\Risk$ exists on $[0,\infty)$. We have shown the following.

\begin{lemma}
Replace assumption \ref{assumption P finite moments} by the stronger assumption
\begin{enumerate}[label=(P\arabic*'), ref=(P\arabic*')]\setcounter{enumi}{1}
\item\label{assumption P compact support} $\P$ has compact support in $\R^{d+1}$.
\end{enumerate}
and condition \ref{growth condition on l} by the weaker assumption that 
\begin{enumerate}[label=(L\arabic*'), ref=(L\arabic*')]\setcounter{enumi}{1}
\item\label{weaker growth condition on l} $\ell$ satisfies the growth condition
\[
\sup_{|y|\leq S, \:(x,y') \in\, \spt(\P)} \big[|\ell| + |\partial_1\ell|\big](y,y') < \infty
\]
for all $S>0$.
\end{enumerate}
Then the gradient flow $\pi_t$ of $\Risk$ starting at $\pi_0\in \mathcal P_2(\overline\Theta)$ exists. If $\pi_0$ is omni-directional, so is $\pi_t$ for all $t>0$.
\end{lemma}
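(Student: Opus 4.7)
The plan is to use the truncation-and-concatenation strategy already sketched in the paragraph preceding the lemma, but to make the stitching precise. Fix $R>0$ so that $\spt(\P)\subseteq B_R(0)\times[-R,R]$, which is possible by \ref{assumption P compact support}. Because $\phi(\theta;x)=a\,\sigma(w^Tx+b)$ satisfies $|\phi(\theta;x)|\leq |a|\,(|w|\,|x|+|b|)\leq (1+R)\,|\theta|^2/2 \cdot 2$ on $\spt(\overline\P)$, a Cauchy--Schwarz estimate on the representation $f_\pi(x)=\int\phi(\theta;x)\,\pi(\d\theta)$ yields the pointwise bound $\|f_\pi\|_{L^\infty(\overline\P)}\leq (1+R)\,N(\pi)$ used in the discussion above the lemma. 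This converts second-moment control into uniform control of the realizations on the data support.

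Next I would verify that for every $S>0$ the truncated loss $\ell_S$ defined in the excerpt satisfies the three assumptions \ref{assumption continuity l}, \ref{growth condition on l}, \ref{assumption convexity l}: joint continuity is clear from the definition, convexity in the first slot follows because $\ell_S$ agrees with $\ell$ on $[-S,S]$ and is extended affinely with matching derivative at the endpoints (convexity of $\ell$ forces these slopes to be monotone, so the extension cannot break convexity), and the derivative $\partial_1\ell_S$ is uniformly bounded by $\sup_{|y|\leq S,\,(x,y')\in\spt(\P)}|\partial_1\ell|(y,y')$, which is finite by \ref{weaker growth condition on l}. Hence Lemma \ref{lemma existence} produces a unique Wasserstein gradient flow $\pi_t^S$ of $\Risk_S$ starting at $\pi_0$, and Lemma \ref{lemma sublinear growth} applied to $\Risk_S$ gives $N(\pi_t^S)\leq 2[N(\pi_0)+\Risk_S(\pi_0)\,t]$; moreover $\Risk_S(\pi_0)\leq \Risk(\pi_0)+C$ with a constant $C$ that is independent of $S$ once $S\geq (1+R)N(\pi_0)$, because on this range of parameters $\ell_S$ and $\ell$ coincide on the relevant slice.

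The key stitching step is the following: as long as $(1+R)N(\pi_t^S)\leq S$, the pointwise bound of the first paragraph gives $|f_{\pi_t^S}(x)|\leq S$ for $\overline\P$-a.e.\ $x$. On this set of $(x,y)$, $\ell_S(\cdot,y)=\ell(\cdot,y)$ together with $\partial_1\ell_S(\cdot,y)=\partial_1\ell(\cdot,y)$, and consequently the velocity potential $\delta_\pi\Risk_S(\pi_t^S,\cdot)$ and its gradient agree with $\delta_\pi\Risk(\pi_t^S,\cdot)$ and its gradient at every $\theta$. Thus $\pi_t^S$ also solves the continuity equation \eqref{eq gradient flow} associated to $\Risk$ on the time interval $I_S:=\{t\geq 0:2(1+R)[N(\pi_0)+\Risk_S(\pi_0)\,t]\leq S\}$. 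For any fixed $T>0$ we can choose $S$ so large that $T\in I_S$; uniqueness of the truncated flow (part of Lemma \ref{lemma existence}) and the coincidence $\ell_S\equiv\ell_{S'}$ on the joint range for $S'\geq S$ guarantee $\pi_t^S=\pi_t^{S'}$ on $I_S\cap I_{S'}$, so the family $\{\pi_t^S\}_S$ defines an unambiguous curve $\pi_t$ on $[0,\infty)$ that is a Wasserstein gradient flow for $\Risk$. Uniqueness for $\Risk$ itself then follows by the same localization: any two solutions remain bounded in second moment by their own sublinear bound and therefore must coincide with $\pi_t^S$ on $I_S$.

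Finally, omni-directionality is inherited essentially for free. If $\pi_0$ is omni-directional, then for each $S$ Lemma \ref{lemma omni-directional} applies to the (genuine) gradient flow $\pi_t^S$ of $\Risk_S$ and yields that $\pi_t^S$ is omni-directional for all $t>0$; since $\pi_t=\pi_t^S$ on $I_S$ and every $t\geq 0$ lies in some $I_S$, the same holds for $\pi_t$. The main technical obstacle I foresee is verifying convexity-preservation of the truncation $\ell_S$ and, more delicately, checking that the identification $\pi_t^S=\pi_t^{S'}$ on $I_S\cap I_{S'}$ is truly a consequence of uniqueness rather than requiring a separate Gr\"onwall argument on the difference of flow maps; but since both sides solve the same continuity equation on the overlap with the same initial datum, uniqueness from Lemma \ref{lemma existence} suffices once one checks (as above) that the two velocity fields literally coincide on the support of $\pi_t^S$ during the overlap window.
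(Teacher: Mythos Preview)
Your proposal is correct and follows exactly the truncation-and-concatenation strategy that the paper itself uses (in the paragraph immediately preceding the lemma). You have simply filled in the details the paper leaves implicit: checking that $\ell_S$ inherits \ref{assumption continuity l}--\ref{assumption convexity l}, that the second-moment bound from Lemma~\ref{lemma sublinear growth} is $S$-independent once $S\geq (1+R)N(\pi_0)$ (in fact $\Risk_S(\pi_0)=\Risk(\pi_0)$ there, so your constant $C$ can be taken to be zero), and that uniqueness from Lemma~\ref{lemma existence} makes the family $\{\pi_t^S\}_S$ consistent on overlapping time intervals.
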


Also the results on the convergence of gradient flows generalize under slightly stronger assumptions on the loss function. Examining the proof of Theorem \ref{theorem main}, we find that the relevant properties of the proof are the following, which we postulate as conditions for smooth loss functions.

\begin{enumerate}[label=(SL\arabic*), ref=(SL\arabic*)]\setcounter{enumi}{0}
\item\label{SL Lavrentiev gap} $\inf_\pi \Risk(\pi) = \widetilde \Risk(f^*)$.
\item\label{SL integrability} For any $\pi \in \mathcal P, \theta\in \Theta$, the functions $(\partial_1\ell)(f_\pi(x), y)\,f^*(x)$ and $(\partial_1\ell)(f_\pi(x), y)\,\phi(\theta;x)$ are $\P$-integrable.
\item\label{SL density} If  
\[
\int_{\R^d\times \R} (\partial_1\ell)(f_\pi(x), y)\,\phi(\theta; x)\,\P(\d x\otimes \d y) \to 0\quad\forall\theta\in \Theta
\]
{then also}
\[
\int_{\R^d\times \R} (\partial_1\ell)(f_\pi(x), y)\,f^*(x)\,\P(\d x\otimes \d y) \to 0
\]
\end{enumerate}

The smooth loss conditions have interpretations as follows. \ref{SL Lavrentiev gap} shows that there exist measures $\pi\in \P$ which have finite loss and that $f^*$ can be suitably approximated by $f_\pi$ in risk, i.e.\ there exists no `Lavrentiev gap' for this risk functional. \ref{SL integrability} is achieved in the previous case by the fact that $\partial_1\ell \in L^\infty$ and $f^*, \phi(\theta, \cdot)$ in $L^1$. In the examples below, this will be extended to more general $L^p/L^{p'}$ dual pairings. \ref{SL density} is a density result for the functions $\phi(\theta,\cdot)$ similar to \ref{SL Lavrentiev gap}, but for dual pairings instead of energies.

\begin{theorem}\label{theorem main 2}
Assume the conditions \ref{assumption continuity l}, \ref{weaker growth condition on l}, \ref{assumption convexity l}, \ref{assumption P Borel}, \ref{assumption P compact support}, \ref{assumption density in L1}, \ref{assumption RL1}, \ref{assumption growth at infinity}, \ref{assumption finite risk}, \ref{assumption initial cone}, \ref{assumption omnidirectional}, \ref{assumption Sard}, \ref{SL Lavrentiev gap}, \ref{SL integrability} and \ref{SL density}. Then
\begin{enumerate}
\item $\Omega^{lim}$ is not empty and
\item $\lim_{t\to\infty} \Risk(\pi_t) = \inf_{\pi}\Risk(\pi)$ if and only if $\Omega^{lim}$ consists of only one element.
\end{enumerate}
If  $\Omega^{lim}$ consists of only one element, then that element is $\lim_{t\to\infty}(\delta_\pi \Risk)(\pi_t;\theta) \equiv 0$.
\end{theorem}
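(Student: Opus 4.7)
The strategy is to revisit the proof of Theorem \ref{theorem main} and verify that every step transfers to the smooth-loss setting, with the uniform Lipschitz bound \ref{growth condition on l} replaced by the package \ref{assumption P compact support}, \ref{weaker growth condition on l}, \ref{SL Lavrentiev gap}, \ref{SL integrability}, \ref{SL density}. Global existence of the flow $\pi_t$ on $[0,\infty)$ is already granted by the lemma preceding the theorem, while Lemmas \ref{lemma sublinear growth}, \ref{lemma omni-directional} and \ref{lemma cone argument} transfer verbatim, since their proofs only exploit the homogeneity of $\phi$ and the convexity/dissipation structure, not a global bound on $\partial_1\ell$.

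First I would establish non-emptiness of $\Omega^{lim}$. Compact support $\spt(\P)\subseteq B_R(0)$ together with the pointwise estimate $|f_{\pi_t}(x)|\leq (1+R)\,N(\pi_t)$ and Lemma \ref{lemma sublinear growth} control the range of $f_{\pi_t}$ on $\spt(\overline\P)$ by a function growing at most sublinearly in $t$; together with energy monotonicity $\Risk(\pi_t)\leq \Risk(\pi_0)$ and the coercivity \ref{assumption growth at infinity}, this range can be localized. Assumption \ref{weaker growth condition on l} then yields locally uniform bounds on $(\partial_1\ell)(f_{\pi_t}(x),y)$ on $\spt(\P)$, so for $\theta$ in a compact subset of $\overline\Theta$ the integrands defining $\delta_\pi\Risk(\pi_t,\theta)$ and $\nabla_\theta\delta_\pi\Risk(\pi_t,\theta)$ are uniformly bounded and, by \ref{assumption RL1}, equicontinuous in $\theta$. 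Arzelà-Ascoli then produces a subsequential limit $(g,V)\in\Omega^{lim}$.

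For the equivalence, suppose first $|\Omega^{lim}|=1$. Then the full flow $\delta_\pi\Risk(\pi_t,\cdot)$ converges (together with its gradient) to a unique $g$, and Lemma \ref{lemma cone argument} forces $g\equiv 0$, since otherwise exponential growth of $N(\pi_t)$ would contradict Lemma \ref{lemma sublinear growth}; this proves the final assertion. To deduce $\Risk(\pi_t)\to \inf\Risk$ from $g\equiv 0$, use convexity of $\ell$ to obtain
\[
\Risk(\pi_t)-\widetilde\Risk(f^*)\leq \int (\partial_1\ell)(f_{\pi_t}(x),y)\big(f_{\pi_t}(x)-f^*(x)\big)\,\P(\d x\otimes\d y),
\]
and rewrite the $f_{\pi_t}$-term as $\int \delta_\pi\Risk(\pi_t,\theta)\,\pi_t(\d\theta)$. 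Assumption \ref{SL integrability} ensures all quantities are well-defined; \ref{SL density} (combined with $g\equiv 0$) kills both pairings in the limit, and \ref{SL Lavrentiev gap} identifies $\widetilde\Risk(f^*)$ with $\inf\Risk$. Conversely, if $\Risk(\pi_t)\to\inf\Risk$, then for any cluster point $(g,V)\in\Omega^{lim}$ along $t_n\to\infty$, the same convexity inequality together with the dissipation bound $\int_0^\infty\!\int|\nabla\delta_\pi\Risk|^2\pi_s(\d\theta)\ds<\infty$ and \ref{SL integrability} force $\int(\partial_1\ell)(f_{\pi_{t_n}},y)\,\phi(\theta,x)\,\P(\d x\otimes\d y)\to 0$ for every $\theta\in\overline\Theta$. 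Hence $g\equiv 0$, so $\Omega^{lim}=\{0\}$ is a singleton.

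The main obstacle will be the converse direction: extracting pointwise convergence of $\delta_\pi\Risk(\pi_t,\theta)$ from the scalar convergence $\Risk(\pi_t)\to\inf\Risk$, without presuming Wasserstein convergence of $\pi_t$. In the Lipschitz case the uniform $L^\infty$-bound on $\partial_1\ell$ combined with \ref{assumption density in L1} reduces this to a soft density argument; here the burden is placed on \ref{SL density}, and one must carefully track how the possible time-growth of the range of $f_{\pi_t}$ interacts with the compact support of $\P$ and with the growth of $|\partial_1\ell|$ allowed by \ref{weaker growth condition on l}, so that the dual pairings controlled by \ref{SL density} absorb the relevant behavior uniformly on compacta in $\theta$.
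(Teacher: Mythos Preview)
Your overall strategy—reduce to the proof of Theorem \ref{theorem main} and check that the conditions \ref{SL Lavrentiev gap}--\ref{SL density} replace the uses of the uniform Lipschitz bound \ref{growth condition on l}—is exactly the paper's intended approach. The forward direction is essentially right, but your phrasing contains a real gap: you claim that $g\equiv 0$ together with \ref{SL density} ``kills both pairings'' in
\[
\Risk(\pi_t)-\widetilde\Risk(f^*)\ \leq\ \underbrace{\int_{\overline\Theta}\delta_\pi\Risk(\pi_t,\theta)\,\pi_t(\d\theta)}_{A_t}\ -\ \underbrace{\int(\partial_1\ell)(f_{\pi_t},y)\,f^*\,\P}_{B_t}.
\]
Only $B_t\to 0$ follows from \ref{SL density}. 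The term $A_t$ need \emph{not} tend to zero: $\delta_\pi\Risk(\pi_t,\cdot)$ is two-homogeneous and $\pi_t$ may spread, so locally uniform convergence to $0$ says nothing about $\int\delta_\pi\Risk\,\pi_t$. The paper does not try to make $A_t\to 0$; it instead identifies $A_t=-\tfrac14\frac{d}{dt}N(\pi_t)$ (Euler's identity plus the continuity equation) and argues that if $\Risk(\pi_t)-\widetilde\Risk(f^*)$ stayed bounded away from zero, $N(\pi_t)$ would decrease linearly to $-\infty$, which is impossible. You need this Lyapunov step.

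The converse direction is where your proposal genuinely diverges from the paper and, as written, does not work. The dissipation bound $\int_0^\infty\!\int|\nabla\delta_\pi\Risk|^2\,\pi_s\,\d s<\infty$ only gives smallness of $\nabla\delta_\pi\Risk$ \emph{on the support of $\pi_s$}, along some subsequence of times which you do not get to choose to coincide with $t_n$; combined with the convexity inequality this does not yield $\delta_\pi\Risk(\pi_{t_n},\theta)\to 0$ for \emph{every} $\theta$. The paper's argument is completely different and does not touch the dissipation: from $\Risk(\pi_t)\to\widetilde\Risk(f^*)$ one gets $L_x(f_{\pi_t}(x))-L_x(f^*(x))\to 0$ in $L^1(\overline\P)$, hence pointwise $\overline\P$-a.e.\ along a subsequence; coercivity \ref{assumption growth at infinity} and compactness of $M_x$ then force $f_{\pi_{t_n}}(x)\to f^*(x)$ a.e.; continuity of $\partial_1\ell$ gives $(\partial_1\ell)(f_{\pi_{t_n}}(x),y)\to(\partial_1\ell)(f^*(x),y)$ a.e., and finally one passes to the limit in $\int(\partial_1\ell)(f_{\pi_{t_n}},y)\,\phi(\theta,\cdot)\,\P$ using $\int(\partial_1\ell)(f^*,y)\,\P^x(\d y)=L_x'(f^*(x))=0$. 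This pointwise-convergence route is entirely absent from your proposal and is the missing idea. (Note also that \ref{SL density} goes in the opposite direction—from $\phi$-pairings to the $f^*$-pairing—so it plays no role here.)
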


Corollaries \ref{corollary minimizer 1} and \ref{corollary minimizer 1} remain true also in this case, and Theorem \ref{theorem main weak} generalizes in the same way.

\section{Examples}\label{section examples}

We imposed a number of abstract conditions on the loss function, data distribution, and initial condition. In this section we consider concrete situations where the conditions are met.

\begin{example}[Huber-type loss functions]
Loss functions like Huber loss
\[
\ell_H(y, y') = \begin{cases}\frac12\,|y-y'|^2 &\text{if } |y-y'|<1\\|y-y'|-\frac12 &\text{else}\end{cases}
\]
and pseudo-Huber loss 
\[
\ell_{psH}(y,y') = \sqrt{|y-y'|^2+1}-1
\]
are Lipschitz-continuous and have Lipschitz continuous first derivatives. They grow at infinity for any fixed $y'\in \R $, so for distributions $\P$ with finite first moments, they are well-defined and coercive. Pseudo-Huber loss is strictly convex, so $f^*$ is uniquely defined. Huber loss is only strictly convex around its minimum and not strictly convex away from it, so if 
\[
\P^x = \delta_{y=-2} + \delta_{y=2},
\]
then any $f^*(x) \in [-1,1]$ is an admissible minimizer. More generally, we can observe that Huber loss has a characteristic smoothing length scale. If data is very spotty on a larger length scale, uniqueness of the minimum may not hold.

Huber-type loss functions interpolate between finding the mean (quadratic loss) and median (linear loss) of the conditional distribution $\P^x$. They are less sensitive to outliers than mean squared loss. 
\end{example}

\begin{example}[Binary classification]
Consider the soft-plus loss function
\[
\ell(y,y') = \log\left( 1+ \exp\big(-yy'\big)\right).
\]
Clearly $\ell$ is Lipschitz-continuous and grows linearly if $yy' \to -\infty$. The loss decays to zero exponentially fast on the other hand if $yy' \to \infty$.
In the case of binary classification, the data distribution $\P$ is given by a measure of data samples $\bar \P$ and marginals $\P^x = \lambda(x)\,\delta_{\{y=1\}} + (1-\lambda(x))\,\delta_{\{y=-1\}}$ for $\lambda(x)\in[0,1]$, thus
\[
f^*(x) = \argmin_{\alpha \in[-\infty,\infty]} \lambda(x)\,\log(1+e^{-\alpha}) + (1-\lambda(x))\,\log(1+e^{\alpha}).
\]
If there exists a minimal uncertainty $0<\lambda_{min}\leq \lambda(x)$ in the prediction of labels for $\overline\P$-almost every $x$, then there is a unique minimizing $\alpha\in\R$ which satisfies
\begin{align*}
0 &=\frac{d}{d\alpha} \big[\lambda\,\log(1+e^{-\alpha}) + (1-\lambda)\,\log(1+e^{\alpha})\big]\\
	&= \frac{-\lambda\,e^{-\alpha}}{1+e^{-\alpha}} + (1-\lambda)\frac{e^\alpha}{1+e^\alpha}\\
	&= (1-\lambda - \lambda\,e^{-\alpha})\,\frac{e^\alpha}{1+e^\alpha}.
\end{align*}
Hence 
\[
e^{-\alpha} = \frac{1-\lambda}\lambda, \qquad\text{i.e.}\quad f^*(x) = \alpha(\lambda(x)) = \log\left(\frac{\lambda(x)}{1-\lambda(x)}\right)
\]
is (essentially) uniformly bounded in the presence of minimal uncertainty and continuous in $x$ if $\lambda$ is, while for perfectly predictable labels (i.e.\ when $\lambda\in\{0,1\}$ $\overline\P$-almost surely)
\[
f^* = \infty \cdot \big(1_{\{\lambda = 1\}} - 1_{\{\lambda=0\}}\big)= (2\lambda-1)\cdot\infty
\]
is almost surely not finite-valued. Our results apply directly in the first situation, but not the second. In this case, the minimizer is simple enough to be understood directly. Namely,
\begin{align*}
\Risk(\pi_t)\to 0 &\LRa \log\big(1+\exp\big(-y\,f_{\pi_t}(x)\big)\big) \to 0 & \P-\text{almost everywhere}\\
	&\LRa y\,f_{\pi_t}(x)\to \infty &\P-\text{almost everywhere}\\
	&\LRa \frac{\exp(-y\,f_{\pi_{t}}(x))}{1+\exp(-yf_{\pi_t}(x))}\to 0 & \P-\text{almost everywhere}\\
	&\LRa \delta_\pi \Risk(\theta) \to 0 \qquad \forall\ \theta\in \Theta
\end{align*}
\end{example}

\begin{remark}
If $\P = \frac1N\sum_{j=1}^N \delta_{(x_j,y_j)}$ is an empirical measure with $x_i\neq x_j$ for $i\neq j$, we can achieve perfect classification. The decay of loss only characterizes the behavior $f_\pi \to f^*$ at data points. A more careful analysis shows that classifiers converge to maximum margin solutions \cite{Chizat:2020aa}, which also characterizes the behavior in between data points to be, in some sense, optimal. This corresponds to the observation in the proof or Theorem \ref{theorem main} that if the gradient of risk is uniformly small and the excess risk is large, then the second moments $N$ are decreasing.

The model considered in \cite{Chizat:2020aa} is neural network-like with the same homogeneity as ReLU-activated two layer networks. Our results for ReLU activation cannot be applied directly since $\overline\P$ must not be an empirical measure. Even assuming existence, the proof of Lemma \ref{lemma cone argument} hinges on the analysis of the flow map and requires smoothness. However, very localized measures of the form
\[
\P = \sum_{j=1}^N \eps^{-(d+1)}\,\eta\left(\frac{\cdot - (x_j, y_j)}\eps\right) \cdot \mathcal{L}^{d+1}
\]
would be expected to lead to similar behavior. Here $\eta$ is any compactly supported probability density, $\eps>0$ is a small parameter and $\mathcal L$ denotes Lebesgue measure. Empirical measures are recovered in the singular limit $\eps\to0$.
\end{remark}

\begin{example}
Huber-type and softplus loss allow for data distributions $\P$ with bounded first moments. Under the (much stronger) assumption that the support of the data distribution $\P$ is compact, algebraic loss functions
\[
\ell_p(y,y') = \frac{|y-y'|^p}p, \qquad p\in (1,\infty)
\]
are admissible. Conditions \ref{SL Lavrentiev gap} and \ref{SL density} are met since functions of the form $f_\pi$ are uniformly dense in the space of continuous functions on compact sets (Universal Approximation Theorem) and continuous functions are dense in Lebesgue spaces $L^q(\overline\P)$ for any Radon probability measure on $\R^d$ due \cite[Theorem 2.11]{fonseca2007modern}.

\ref{SL integrability} is met since $f^*\in L^p(\overline \P)$ due to the boundedness assumption on the support of $\P$ and the growth condition on $\ell_p$. Observe furthermore that $f_\pi, \phi(\theta,\cdot) \in L^\infty(\P)$ for fixed $\pi \in \mathcal P$.
\end{example}

\begin{example}[Data distributions]
Using Theorem \ref{theorem admissible data}, many data distributions of practical importance are admissible for Lipschitz loss, including all distributions which have a bounded density with respect to Lebesgue measure and decay suitably fast at infinity, e.g.\ 
\begin{enumerate}
\item distributions with continuous density on a bounded open set or
\item Gaussian mixture models with uniformly bounded means and variances.
\end{enumerate}
For general loss functions, the first class is admissible, while Gaussian mixture models are excluded for purely technical reasons.
\end{example}

The question which lower-dimensional objects have admissible geometry remains open at this point. 

\begin{example}[Initial condition]
Initial conditions need to be omni-directional and satisfy the scaling property 
\[
|a|^2 \leq |w|_{\ell^2}^2 + |b^2|.
\]
The easiest choice of admissible condition is to independently choose $(w,b)$ uniformly distributed on $S^d$ and $a$ uniformly distributed on $[-1,1]$. In applications, a popular choice is to initialize $(w,b)$ according to a standard normal distribution with mean zero and unit variance. It is easily computed that
\[
\E\big[|w|^2 + b^2] = d+1
\] 
and due to \cite[Theorem 3.1.1]{vershynin2018high}, the norm of $(w,b)$ {\em concentrates} close to $\sqrt d$ in the sense that
\[
\P_\N\big(\big\{ (w,b) : \big|\sqrt{|w|^2+b^2} - \sqrt{d}\big| > \eps\big\}\big) \leq 2\exp\left(-c\,d\,\eps\right)
\]
for a universal constant $c>0$. Thus if $a$ is distributed on a domain sufficiently smaller than $O(\sqrt{d})$ and $d$ is reasonably large with respect to the network width, then with high probability $\pi_0$ satisfies \ref{assumption initial cone}.
\end{example}

\section{Conclusion}\label{section conclusion}

We have shown that the convergence of gradient descent training with infinitesimal step size for two-layer networks with ReLU or leaky ReLU activation starting at omni-directional initial conditions is equivalent to the convergence of the velocity potential to a unique limit (under certain technical conditions). The result holds for a fairly general class of loss functions and data distributions. Convergence along subsequences is guaranteed by compactness.

A number of questions remain open.

\begin{itemize}
\item We have shown that convergence to minimal Bayes risk is equivalent to the convergence of the velocity potentials $\delta_\pi\Risk $ to a unique limit (for suitable initial conditions). Whether the limiting potential is generally unique remains one of the most relevant open questions in theoretical machine learning.

\item To prove existence and make use of the flow map representation, we are restricted to population risk for suitable data distributions. Especially for the case of low-dimensional data in high-dimensional spaces, the regularity condition on the data manifold is hard to understand and check.

Even if a data distribution $\P$ is admissible and the gradient flows of empirical risk functionals sampled from $\P$ exist, it is not obvious if they approach the gradient flow of population risk. Convergence of gradient flows is known to hold under certain (usually hard to check) conditions \cite{serfaty2011gamma,sandier2004gamma}, but there are non-convergence results in applications of practical importance in other fields \cite{mielke2012emergence, dondl2019effect}.

\item It remains open whether a Morse-Sard type property like \ref{assumption Sard} generally holds in higher dimension (possibly under additional assumptions), or whether it can be eliminated from the proof of Lemma \ref{lemma cone argument}. The property is guaranteed to hold by the Morse-Sard theorem for sub-analytic functions \cite{bolte2006nonsmooth} if the measure $\P$ is a finite sum of point-masses. Unfortunately, highly concentrated measures are at odds with the regularity assumption \ref{assumption RL1} which is required in the ReLU setting.

\item Even if a risk-minimizing measure $\pi$ exists and risk decays to its minimum, it is unclear whether 
\begin{enumerate}
\item $\Risk(\pi_t)$ decays at a rate,
\item the second moments $N(\pi_t)$ remain bounded, and 
\item the measures $\pi_t$ converge to a minimizer weakly or in $2$-Wasserstein distance (assuming that $N(\pi_t)$ remains bounded).
\end{enumerate}

\item State-of-the-art neural network architectures can have hundreds or even thousands of layers, far from the two-layer situation considered here. In \cite{chizat2018global}, the authors consider also the case of smooth bounded activation functions which are linear in one direction (and sufficiently smooth). These results apply to network architectures in which every node in the outermost layer is given its own set of parameters for deeper layers. Other models for mean field training of deep networks \cite{araujo2019mean, nguyen2019mean, nguyen2020rigorous, sirignano2019mean} are very different from models for shallow networks. No analogous result is available in this setting.
\end{itemize}

\appendix

\section{Proofs}\label{appendix proofs}

In this section we collect the previously omitted proofs. 

\subsection{Proofs from Section \ref{section background}}
We begin with a proof of Theorem \ref{theorem admissible data} on admissible data distributions for our analysis. Recall that we say that \ref{assumption RL1} holds for $\overline\P$ if the map
\[
S^d\to L^1\big(\sqrt{1+|x|^2}\cdot\overline\P\big), \qquad (w,b) \mapsto 1_{\big\{x|w^Tx+b>0\big\}}
\]
is Lipschitz continuous.

\begin{proof}[Proof of Theorem \ref{theorem admissible data}]
Consider an open half-space $H$ in $\R^d$. Since $\partial H$ is convex, there exists a closest point $x_H\in \partial H$ to the origin in $\R^d$. If $0\notin \partial H$, we can characterize 
\[
H = \begin{cases} \{x\in\R^d\:|\: \langle x, x_H\rangle > |x_H|^2\} &0\notin H\\
\{x\in\R^d\:|\: \langle x, x_H\rangle < |x_H|^2\}  &0\in H\end{cases}.
\]
Thus $H$ is represented by as $H = \{x|w^Tx+b>0\}$ for
\[
(w,b)\in \R^d,\qquad (w,b) = \lambda\, (\pm x_H, |x_H|^2), \qquad\lambda> 0.
\]
In particular, there is a canonical representative in the unit sphere
\[
(w,b) = \left(\pm \frac{x_H}{\sqrt{|x_H|^2 + |x_H|^4}}, \:\frac{|x_H|^2}{\sqrt{|x_H|^2 + |x_H|^4}}\right).
\]
For the derivative estimates below, we can assume without loss of generality that $b\notin\{0,\pm1\}$, as the Lipschitz estimate extends to the boundary points by uniform continuity. Given a density $\rho$, denote
\[
\tilde \rho(x) = \rho(x)\,\sqrt{1+|x|^2},\qquad
\tilde \rho^+(x) = \limsup_{y\to x}\tilde\rho(y)
\]
and observe that $\tilde\rho^+$ satisfies the same decay estimate as $\sqrt{1+|x|^2}\cdot\rho$ and is integrable with respect to Lebesgue measure. $\tilde\rho^+$ is upper semi-continuous.

We prove the first claim. Let $(w,b)\in S^d$. Without loss of generality, $w = \sqrt{1-b^2}\,e_1$. Here, we can even take $\eps =0$ in the decay condition and compute
\begin{align*}
\limsup_{h\to 0^+} &\bigg\|\frac{1_{\{x|w^Tx+b+h>0\}} - 1_{\{x|w^Tx+b>0\}}}{h} \bigg\|_{L^1(\sqrt{1+|x|^2}\cdot\overline\P)}\\
	&= \limsup_{h\to 0^+} \frac1h\int_{\big\{x\big|-(b+h)<w^Tx<-b\big\}} \,\tilde\rho(x)\dx\\
	&= \limsup_{h\to 0^+} \frac1h\int_0^{\frac h{\sqrt{1-b^2}}}\int_{\{x_1=-b/\sqrt{1-b^2}\}} \,\tilde\rho(x-te_1)\,\H^{d-1}(\d x)\,\dt\\
	&= \frac1{\sqrt{1-b^2}}\limsup_{h\to 0} \int_{0}^1 \int_{\{x_1=-b/\sqrt{b}\}}\tilde\rho\left(x-s\,\frac{h}{\sqrt{1-b^2}}e_1\right)\,\H^{d-1}(\d x)\,\ds\\
	&\leq \frac1{\sqrt{1-b^2}}\int_{\{x_1=-b/\sqrt{1-b^2}\}} \tilde\rho^+(x)\,\H^{d-1}(\d x)\\
	&\leq \frac1{\sqrt{1-b^2}}\int_{\R^{d-1}} \frac{C}{\big(1 + |x|^2 + \frac{b^2}{1-b^2}\big)^\frac{d+1}2}\,\dx\showlabel\label{eq b=0}\\
	&= \frac{\sqrt{1-b^2}}{b^2} \int_{\R^{d-1}} \frac{C}{\left(\frac{1-b^2}{b^2} + \left|\frac{\sqrt{1-b^2}\,x}{b}\right|^2 + 1\right)^{\frac{d+1}2}}\,\left(\frac{\sqrt{1-b^2}}b\right)^d\dx\\
	&\leq \frac{\sqrt{1-b^2}}{b^{2}} \int_{\R^{d-1}} \frac{1}{\big(1+|y|^2\big)^\frac{d+1}2}\,\dy\showlabel\label{eq b>0}
\end{align*}
This is bounded from above uniformly when $b$ is bounded away from $\pm 1$ by \eqref{eq b=0} and close to $b=\pm 1$ by \eqref{eq b>0}. 
 The passage to the limit can be justified using Fatou's lemma. A similar estimate holds for the limit $h\to0^-$, and the derivative in direction of increasing/decreasing $w$ radially is the same as that in direction of changing $b$.

Now consider $v\in \R^d$ such that $\langle v,w\rangle = 0$. Taking the derivative in $v$, the half-spaces are no longer aligned and rather than a slab, their symmetric difference is the union of two wedges. Again without loss of generality, we assume that $w= \sqrt{1-b^2}\,e_1$ and $v= e_2$. Then
\begin{align*}
\big\{x &: \big|1_{\{w^Tx+b>0\}} - 1_{\{(w+hv)^Tx + b>0\}}\big|=1\big\} \\
	&= \{w^Tx+b>0\}\, \Delta\, \{(w+hv)^Tx + b>0\}\\
	&= \left\{x: \sqrt{1-b^2}\,x_1 + hx_2 < -b < \sqrt{1-b^2}\,x_1 \right\} \cup \left\{x: \sqrt{1-b^2}\,x_1 < -b < \sqrt{1-b^2}\,x_1 + hx_2 \right\} \\
	&= \left\{x: - \frac{b}{\sqrt{1-b^2}} < x_1 < -\frac{b +hx_2}{\sqrt{1-b^2}}\right\} \cup \left\{x:  -\frac{b+hx^2}{\sqrt{1-b^2}} < x_1 < -\frac{b}{\sqrt{1-b^2}}\right\}\\
	&\subset\left\{x: \frac{-b - |hx_2|}{\sqrt{1-b^2}} < x_1 < \frac{-b + |hx_2|}{\sqrt{1-b^2}}\right\}
\end{align*}
Then we can therefore bound the tangential derivative as follows.
\begin{align*}
\limsup_{h\to 0^+}& \frac{\|1_{\left\{x\:|\: (w+h v)^Tx+b>0\right\}} - 1_{\{x|w^Tx+b>0\}}\|_{L^1(\sqrt{1 + |x|^2}\cdot \overline\P)}}{h}\\
	&\leq \limsup_{h\to0}\frac1{|h|} \int_{\R^{d-1}} \int_{\frac{-b - |hx_2|}{\sqrt{1-b^2}}}^\frac{-b + |hx_2|}{\sqrt{1-b^2}} \tilde\rho(y_1,x_2,\dots,x_d)\,\dy_1\dx_2\dots\dx_d\\
	&= \limsup_{h\to 0} \int_{\R^{d-1}} \int_{-1}^1 \frac{|x|_2}{\sqrt{1-b^2}}\,\tilde\rho\left(-\frac b{\sqrt{1-b^2}} + \frac{th|x_2|}{\sqrt{1-b^2}}, x_2,\dots,x_d\right)\d t\,\dx_2\dots\dx_d\\
	&\leq \frac{1}{\sqrt{1-b^2}}\int_{\R^{d-1}}|x_2|\,\tilde\rho^+\left(-\frac{b}{\sqrt{1-b^2}}, x_2,\dots,x_d\right)\,\dx_2\dots\dx_d.
\end{align*}
The remainder of the proof proceeds as above, except for the weight of $|x_2|$ in front of the density. While previously a decay as $|x|^{-(d+1+\eps)}$ for $\eps>0$ would have been sufficient, here we need decay as $|x|^{-(d+2+\eps)}$ at infinity.

The proof of the second statement is similar to the first. The third statement is immediate from the definition. To consider the fourth statement, let $\M$ be the set of Radon probability measures
\[
\M = \left\{\mathbb P\::\: \big\|1_{\{w^T\cdot + b>0\}} - 1_{\{\tilde w^T \cdot + \tilde b>0\}}\big\|_{L^1(\sqrt{1+|x|^2}\cdot \overline\P)} \leq L\big[|w-\tilde w| + |b-\tilde b|\big]\right\}.
\]
Assume that $\P_n$ is a sequence in $\M$ and $\P$ is a probability measure such that $\P_n\to \P$ in the sense of Radon measures. Then
\[
\P(U) \leq \liminf_{n\to\infty}\P_n(U)
\]
for all open sets $U$. In particular, the symmetric differences of open sets are open. This establishes the Lipschitz condition by the previous analysis since
\[
\|1_H - 1_{H'} \|_{L^1(\sqrt{1+|x|^2}\cdot \overline\P_n)} = \big(\sqrt{1+|x|^2}\cdot\overline\P_n\big)(H\Delta H') \leq L \big[|w-w'| + |b-b'|\big]
\]
where $H, H'$ are the half-spaces 
\[
H = \{x: w^Tx+b>0\}, \qquad H' = \{x:(w')^Tx+b'>0\}.
\]
\end{proof}

In particular, sums of Gaussian distributions or compactly supported regular distributions as they occur in density estimation are admissible data distributions for our purposes. They can be computed from a given finite data sample and mollify the problem sufficiently for our convergence result.

Many full-dimensional data distributions satisfy \ref{assumption RL1}, but distributions with a bounded density on the hypersphere is admissible. If data is concentrated on a manifold of dimension $k$, the intuition is that $k$ should not have any `straight' segments which lie mostly in a lower-dimensional affine subspace.

\begin{remark}
It is easy to see that no measure $\overline\P$ such that $\overline\P(\partial H)>0$ for any half-space $H$ satisfies \ref{assumption RL1}. In fact, the map $(w,b)\mapsto 1_{\{w^Tx+b>0\}}$ is not even continuous for such distributions. This in particular excludes empirical measures, but also lower-dimensional data manifolds which are entirely contained in an affine subspace.
\end{remark}

A complete characterization of admissible measures is beyond the scope of this article. We move on to the proof of the measurability of $f^*$.

\begin{proof}[Proof of Lemma \ref{lemma measurable}]
{\bf First claim.} Note that for every $\alpha\in\R$, the map $x\mapsto L_x(\alpha)$ is $\overline\P$-measurable by the construction of the marginal measures in \cite[Theorem 4.2.4]{attouch2014variational}. Furthermore, $L_x(\alpha)<\infty$ for $\overline\P$-almost every $x$ and all $\alpha\in\R$ since $L_x(0)$ is integrable and $L$ is Lipschitz continuous in $\alpha$ (with the same Lipschitz constant as $\ell$).

Hence the function $(x,\alpha)\mapsto L_x(\alpha)$ is a Caratheodory integrand (finite, continuous in $\alpha$ and measurable in $x$) where $\alpha$ belongs to a second countable complete space. It is well-known that such functions are jointly measurable, see \cite[Theorem 14.75]{aliprantisinfinite}. {\em Sketch of proof:} Define $U = \{x : L_x(0) <\infty\}$ and the sequence of functions
\[
\Psi_k:\R\times U\to [0,\infty), \qquad \Psi_k(\alpha, x) = L_x\left(k\cdot \left\lfloor \frac\alpha k\right\rfloor\right)
\]
which are product measurable. Since $L_x$ is continuous in $\alpha$, $L_x(\alpha) = \lim_{k\to\infty} \Psi_k(\alpha,x)$ and thus $(x,\alpha)\mapsto L_x(\alpha)$ is product measurable on $\R\times U$. Since $\overline\P(\R^d\setminus U) = 0$, we find that the map is product measurable on the entire space. Note that we establish product-measurability with respect to the Borel $\sigma$-algebra on $\R$ and the $\sigma$-algebra on $\R^d$ generated by $\overline\P$, which contains all $\overline\P$-null sets and is in general larger than the Borel $\sigma$-algebra. 

{\bf Second claim.} To find $f^*$ we use the Kuratowski-Ryll-Nardzewski Selection Theorem \cite[Theorem 14.86]{aliprantisinfinite} which states that if the (possibly multi-valued) map
\[
M: U\to 2^\R, \qquad M(x) = \argmin_\alpha L_x(\alpha)
\]
is a weakly measurable correspondence (see \cite[Chapter 14]{aliprantisinfinite}) with nonempty closed values in a Polish space, then it admits a measurable selector. The only non-trivial fact is the weak measurability of $M$, which means that we need to check that the set
\[
\{x\in U\:|\:\phi(x)\cap V\neq\emptyset\} = \left\{x\in U\:\bigg| \:\exists\ \alpha \in V\text{ s.t.\ }L_x(\alpha) = \inf_{\alpha'\in\R}L_x(\alpha')\right\}
\]
is measurable in $U$ whenever $V$ is open. Any open set $V$ admits a countable dense subset $\widehat V$, which means that
\[
m_V:U\to[0,\infty), \quad m(x) = \inf_{\alpha\in V} L_x(\alpha) = \inf_{\alpha\in \widehat V} L_x(\alpha)
\]
is measurable due to the continuity of $L_x$ in $\alpha$. We conclude that
\[
\{x\in U\:|\:M(x)\cap V\neq\emptyset\} = \{x\in U\:|\: m_V(x) = m_\R(x)\}
\]
is measurable.
 \end{proof}
  
For the reader's convenience, we link Wasserstein gradient flows to classical gradient flows.

\begin{proof}[Proof from Section \ref{section wasserstein gf}]
Let $\Theta = (\theta_1,\dots,\theta_m)$ and $f_\Theta(x) = \frac1m\sum_{i=1}^m \phi(\theta_i; x)$. Then 
\begin{align*}
\nabla_{\theta_i}\Risk(\Theta) &= \nabla_{\theta_i} \int_\Theta \ell\big(f_\Theta(x), y\big)\,\P(\d x)\\
	&= \int_\Theta (\partial_1\ell)\big(f_\Theta(x), y\big)\,\nabla_{\theta_i}\left(\frac1m \sum_{i=1}^m \phi(\theta_i; x)\right)\,\P(\d x)\\
	&= \frac1m \int_\Theta (\partial_1\ell)\big(f_\Theta(x), y\big)\,(\nabla_\theta\phi)(\theta_i;x)\,\P(\dx)\\
	&= \frac1m \nabla (\delta_\pi\Risk)\left(\frac1m \sum_{j=1}^m\delta_{\theta_j}; \theta_i\right).
\end{align*}
Thus if the parameters $\theta_i$ evolve by the law $\dot \theta_i = - m\,\nabla_{\theta_i}\Risk(\Theta)$ for all $i=1,\dots,m$, then their distribution $\pi^m:= \frac1m \sum_{i=1}^m\delta_{\theta_i}$ satisfies the transport equation
\[
\dot\pi^m_t = \div\big(\pi^m_t\,\nabla(\delta_\pi\Risk)(\pi^m_t; \cdot)\big)
\]
by the flow map representation. This is precisely the PDE formulation of the 2-Wasserstein gradient flow. 
\end{proof}
  
Now we show that gradient flow of $\Risk$ exists for any initial condition $\pi_0\in \mathcal P_2(\overline\Theta)$ and that the omni-directionality of the initial measure is preserved along the gradient flow evolution (for finite time). Except for technical issues stemming from the lack of regularity in ReLU activation, the analysis follows \cite[Appendix B]{chizat2018global}.

For technical purposes, it is necessary to consider ReLU activation on the whole space and not just the (non-convex) cone $\Theta$. The natural extension which is linear in $a$ faces the aforementioned issues of non-differentiability. We therefore define $\phi$ on $\R^{d+2}$ as
\[
\phi(a,w,b ; x) = \eta\left(\frac{a^2 - |w|^2 - b^2}{a^2 + |w|^2 + b^2}\right)\,a\,\sigma\left({w^Tx+b}\right)
\]
where 
\[
\eta(z) = \begin{cases} 1 &z\leq 0\\ 0 & z\geq \frac12\end{cases}, \qquad \eta' \leq 0
\]
is a smooth cut-off function. In particular, note that
\[
\phi(a,w,b; x) = a\,\sigma(w^Tx+b)
\]
for all $(a,w,b)\in \Theta$.

\begin{lemma}\label{lemma extension of phi}
Consider $\phi:\R^{d+2}\times \R^d\to\R$ as above. Then
\begin{enumerate}
\item For any $\rho\in L^\infty(\P)$, the vector field
\[
V_\rho:\R^{d+2}\to \R, \quad V_\rho(\theta) = \int_{\R^d}\rho(x,y)\,\nabla_\theta\phi(\theta,x)\,\P(\d x\otimes \d y) 
\]
is Lipschitz continuous with Lipschitz constant $\|\rho\|_{L^\infty}$.

\item For any $\theta\in \partial\Theta$ and any $\rho\in L^\infty(\P)$, $V_\rho$ is tangent to $\partial\Theta$ at $\theta$. 
\end{enumerate}
\end{lemma}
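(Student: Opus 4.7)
The cutoff $\eta$ in the definition of $\phi$ is engineered precisely to rule out the problematic axis $\{(a,0,0):a\neq 0\}$ on which the direction $(w,b)/|(w,b)|$ would be ill-defined: whenever $3(|w|^2+b^2)\leq a^2$ the argument $\lambda(\theta):=(a^2-|w|^2-b^2)/(a^2+|w|^2+b^2)$ of $\eta$ is at least $1/2$ and $\phi\equiv 0$. On the complementary region the direction $(w,b)/|(w,b)|$ is controlled by $|\theta|$, and the classical chain rule yields, away from the $\overline\P$-null set $\{w^Tx+b=0\}$,
\[
\nabla_\theta\phi(\theta;x) = \eta'(\lambda(\theta))\,\nabla_\theta\lambda(\theta)\,a\,\sigma(w^Tx+b) + \eta(\lambda(\theta))\begin{pmatrix}\sigma(w^Tx+b)\\ a\,\sigma'(w^Tx+b)\,x\\ a\,\sigma'(w^Tx+b)\end{pmatrix}.
\]

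For (i) I would split $V_\rho$ according to this decomposition. The \emph{smooth} contribution is globally Lipschitz by inspection: $\eta'$ is bounded, $\nabla_\theta\lambda$ is smooth and bounded on the support of $\eta'\circ\lambda$, $a\,\sigma(w^Tx+b)$ is $1$-Lipschitz in $(w,b)$ and of linear growth in $\theta$, and the $(x,y)$-integration absorbs $\|\rho\|_{L^\infty}$ via \ref{assumption P finite moments}. The \emph{rough} contribution has the form $\eta(\lambda(\theta))\,a\int \rho(x,y)\,1_{\{w^Tx+b>0\}}\,(x,1)\,\P(\d x\otimes\d y)$, and its only non-Lipschitz dependence on $\theta$ is through the indicator $1_{\{w^Tx+b>0\}}$, which depends only on the radial projection of $(w,b)$ to $S^d$. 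I would therefore first apply \ref{assumption RL1} to obtain Lipschitz regularity of the integral in $(w/|(w,b)|,b/|(w,b)|)$ (the weight $\sqrt{1+|x|^2}$ in \ref{assumption RL1} absorbs the factor $x$ appearing in the $w$-component), and then extend by $0$-homogeneity in $(w,b)$; since on $\{\eta(\lambda)\neq 0\}$ one has $|(w,b)|\geq a/\sqrt{3}$, the prefactor $a\,\eta(\lambda)$ precisely compensates the $|(w,b)|^{-1}$ blow-up of the radial extension, giving $\mathrm{Lip}(V_\rho)\leq C\|\rho\|_{L^\infty}$ with $C$ depending only on $\eta$, on $\P$, and on the constant in \ref{assumption RL1}.

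For (ii), tangency to $\partial\Theta=\{-a^2+|w|^2+b^2=0\}$ at $\theta\in\partial\Theta$ amounts to $V_\rho(\theta)\cdot(a,-w,-b)=0$, where $(a,-w,-b)$ is the Minkowski-conormal. This reduces to the identity $\sigma(z)=z\sigma'(z)$ already used in Section \ref{section initial} for conservation of the Minkowski norm. Because $\eta\equiv 1$ on $\{z\leq 0\}$ and $\eta$ is smooth, one has $\eta'(0)=0$, so at $\lambda(\theta)=0$ the $\eta'$-term in the expression for $\nabla_\theta\phi$ vanishes. Consequently, for $\overline\P$-a.e.\ $x$,
\[
a\,\partial_a\phi - w\cdot\nabla_w\phi - b\,\partial_b\phi = a\,\sigma(w^Tx+b) - a\,(w^Tx+b)\,\sigma'(w^Tx+b) = 0,
\]
and integrating against $\rho\cdot\P$ gives $a\,V_\rho^a-w\cdot V_\rho^w-b\,V_\rho^b=0$ on $\partial\Theta$, which is the claimed tangency.

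The main obstacle is the global Lipschitz bound in (i): the pointwise gradient $\nabla_\theta\phi(\theta;x)$ genuinely jumps across the hyperplane $\{w^Tx+b=0\}$, so the estimate must be carried out only after averaging against $\P$. Matching the sphere-based regularity of \ref{assumption RL1} with the radial scaling on the support of the cutoff, while simultaneously absorbing the factor $x$ in the $w$-derivative into the weight $\sqrt{1+|x|^2}$, is precisely what \ref{assumption RL1} was designed for, so no further input should be required beyond careful bookkeeping of constants.
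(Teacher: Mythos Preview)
Your proposal is correct and follows essentially the same route as the paper: compute $\nabla_\theta\phi$, split off the indicator contribution, handle the smooth part by homogeneity together with the finite first moments of $\P$, handle the rough part via \ref{assumption RL1} combined with the cutoff bound $|a|\leq\sqrt{3}\,|(w,b)|$ on $\spt(\eta\circ\lambda)$, and deduce tangency from $\sigma(z)=z\sigma'(z)$ together with $\eta'(0)=0$. One small correction: $\nabla_\theta\lambda$ is $(-1)$-homogeneous and hence unbounded near the origin even on $\spt(\eta'\circ\lambda)$; the point is rather that the full product $\eta'(\lambda)\,\nabla_\theta\lambda\cdot a\,\sigma(w^Tx+b)$ is positively $1$-homogeneous and Lipschitz on the sphere, which (after integration) yields the desired global Lipschitz bound.
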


\begin{proof}
{\bf Lipschitz-regularity.} 
We observe that
\begin{align*}
\nabla_\theta \phi(\theta;x) &= \eta'\left(\frac{a^2 - |w|^2 - b^2}{a^2 + |w|^2 + b^2}\right)a\,\sigma(w^Tx+b)\\
	&\hspace{3cm}\left\{\frac{2}{a^2 + |w|^2 + b^2}\,\begin{pmatrix}a\\ -w\\ -b\end{pmatrix} -2 \frac{a^2 - |w|^2 - b^2}{\big[a^2 + |w|^2 + b^2\big]^2}\begin{pmatrix} a\\ w\\ b\end{pmatrix}\right\}\\
	&\hspace{1.5cm} + \eta\left(\frac{a^2 - |w|^2 - b^2}{a^2 + |w|^2 + b^2}\right)\begin{pmatrix} \sigma(w^Tx+b)\\ a\,1_{\{w^Tx+b>0\}}x\\ a\,1_{\{w^tx+b>0\}}\end{pmatrix}\\
	&=  \sigma(w^Tx + b)\,V_1(a,w,b) + \eta\left(\frac{a^2 - |w|^2 - b^2}{a^2 + |w|^2 + b^2}\right)\,a\,1_{\{w^Tx+b>0\}}\begin{pmatrix} 0\\ x\\ 1\end{pmatrix}
\end{align*}
where $V_1$ is a positively $0$-homogeneous vector field which is smooth on the sphere $a^2 + |w|^2 + b^2$. Thus for any $x\in \R^d$, the product $\sigma(w^Tx+b)\cdot V_1$ is Lipschitz-continuous on the sphere with Lipschitz-constant $\leq C\sqrt{1+|x|^2}$ and positively one-homogeneous. The triangle-inequality implies that the term is $C\sqrt{1+|x|^2}$-Lipschitz on the whole space. Since the first moments of $\P$ are finite, we deduce that
\[
(a,w,b) \mapsto \int_{\R^d\times \R} \rho(x,y)\,\sigma(w^Tx + b)\,V_1(a,w,b) \,\P(\d x\otimes \d y)
\]
is $C\,\|\rho\|_{L^\infty(\P)}$-Lipschitz continuous. Estimating the second term requires higher regularity of $\P$ as expressed in condition \ref{assumption RL1}. We note that if $\eta>0$, then
\[
\frac{a^2 - |w|^2 - b^2}{a^2+ |w|^2 + b^2} \leq \frac12 \qquad\Ra\quad a^2 - |w|^2 - b^2 \leq \frac{a^2 + |w|^2 + b^2}2\qquad\Ra\quad a^2 \leq 3\,\big[|w|^2 + b^2\big].
\]
For any fixed $(a,w,b)$, $(\tilde a, \tilde w, \tilde b)$, denote by $\eta, \tilde\eta$ the cut-off function evaluated at the respective point. Without loss of generality, we assume that $|(\tilde w, \tilde b)|\leq |(w,b)|$ and compute that
\begin{align*}
\bigg| \int_{\R^d\times \R} &\rho(x,y) \eta\,a\,1_{\{w^Tx+b>0\}} \begin{pmatrix} 0\\ x\\ 1\end{pmatrix} \,\P(\d x\otimes \d y) - \int_{\R^d\times \R} \rho(x,y) \tilde\eta\,\tilde a\,1_{\{\tilde w^Tx+\tilde b>0\}} \begin{pmatrix} 0\\ x\\ 1\end{pmatrix} \,\P(\d x\otimes \d y)\bigg|\\
	&\leq \|\rho\|_{L^\infty(\P)}\int_{\R^d\times \R} \big|\eta a\,1_{\{w^Tx+b>0\}} - \tilde\eta\tilde a\,1_{\{\tilde w^Tx+\tilde b>0\}}\big|\cdot\sqrt{1+|x|^2}\,\P(\d x\otimes \d y)\\
	&\leq \|\rho\|_{L^\infty}\int_{\R^d} \big\{|\eta a - \tilde \eta \tilde a|\,1_{\{w^Tx+b>0\}} + |\tilde \eta \tilde a|\,\big|1_{\{w^Tx+b>0\}}  - 1_{\{w^Tx+b>0\}} \big|\big\}\sqrt{1+|x|^2}\,\overline\P(\d x)\\
	&\leq \|\rho\|_{L^\infty}\left\{|\eta a - \tilde \eta \tilde a|\int_{\R^d}\sqrt{1+|x|^2}\,\overline\P(\d x) + C |\tilde \eta \tilde a|\,\left|\frac{(w,b)}{\sqrt{|w|^2+b^2}} - \frac{(\tilde w, \tilde b)}{\sqrt{|\tilde w|^2 + \tilde b^2}}\right|\right\}\\
	&\leq C\,\|\rho\|_{L^\infty} \left[|\eta a - \tilde \eta \tilde a| + \left| \frac{|(\tilde w, \tilde b)|}{|(w,b)|} (w,b) - (\tilde w, \tilde b)\right|\right]
\end{align*}
The first term on the right is bounded by
\[
|\eta a - \tilde \eta \tilde a| \leq C\,\|(a, w, b) - (\tilde a, \tilde w, \tilde b)\|
\]
since $(a,w,b) \mapsto a\,\eta(a,w,b)$ is a positiively one-homogeneous function which is Lipschitz-continuous on the sphere and thus Lipschitz continuous. The second term on the right is
\[
\left| \frac{|(\tilde w, \tilde b)|}{|(w,b)|} (w,b) - (\tilde w, \tilde b)\right| = \big|P^{S^{d+1}_r}(w,b) - P^{S^{d+1}_r}(\tilde w, \tilde b)\big|
\]
where $P^{S^{d+1}_r}$ denotes the projection onto the sphere of radius $r = |(\tilde w, \tilde b)|$ around the origin. Since both $(w,b)$ and $(\tilde w, \tilde b)$ lie on the exterior of the sphere, this map is one-Lipschitz.

{\bf Tangency condition.} We can write $\Theta = \{(a,w,b)\:|\: -a^2 + |w|^2+ b^2 >0\}$. Then the normal to $\partial\Theta$ is parallel to  $\nabla (-a^2+|w|^2 + b^2) = 2(-a, w, b)$. Fix $(a,w,b)$ and consider any $x$ such that $w^Tx+b\neq 0$. Since $\eta \equiv 1$ close to $\partial\Theta$, we have
\begin{align*}
2|\theta|\,\left\langle \nabla_\theta \phi(\theta;x), \nu_{\partial\Theta}\right\rangle &= \begin{pmatrix} \sigma(w^Tx+b)\\ a\,1_{\{w^Tx+b>0\}}x\\ a\,1_{\{w^tx+b>0\}}\end{pmatrix} \cdot \begin{pmatrix}-a\\ w\\ b\end{pmatrix}\\
	&= -a\,\sigma(w^Tx+b) + a\,\sigma'(w^Tx+b)\,(w^Tx+b)\\
	&=0
\end{align*}
since $\sigma$ is positively one-homogeneous. The equality also holds trivially at $x$ for which $w^Tx+b = 0$, so in particular after integration.
\end{proof}

In the calculus of variations (which encompasses the study of gradient flows), different notions of convexity play a key role. In vector spaces, convexity is a condition along straight lines, which (at least for Hilbert spaces) are the same as length-minimizing curves. The natural generalization to (geodesically complete) metric spaces is to consider the notion of convexity where a functional is `convex' if it is convex along constant-speed length minimizing geodesics. The analogy is particularly strong in $2$-Wasserstein space, which carries the formal structure of a Hilbert manifold, see \cite{otto2001geometry} or \cite[Chapter 15]{villani2008optimal}. This concept of convexity is referred to as {\em displacement convexity} and has been recognized since \cite{mccann1997convexity} as a useful notion when considering gradient flows.

The functionals we consider are not convex in Wasserstein space -- in fact, convergence to a global minimizer is not guaranteed. For the existence of gradient flows, a weaker concept suffices. Recall that the functional $\Risk$ is called $\lambda$-displacement convex, if the following holds: If $\pi_t$ is a geodesic in Wasserstein space, then
\[
\Risk(\pi_t) \leq t\,\Risk(\pi_0) + (1-t)\,\Risk(\pi_1) + \frac\lambda2\,t(1-t)\,W_2^2(\pi_0, \pi_1).
\]
By analogy with the smooth Euclidean case, we can think of the condition as a lower bound on the Hessian $D^2\Risk \geq - \lambda I$. Convexity corresponds to $0$-convexity and uniform convexity to $\lambda$-convexity for $\lambda<0$.

\begin{proof}[Proof of Lemma \ref{lemma existence}]
{\bf Step 1.} We first show that $\Risk$ is $\lambda$-displacement convex for a suitable $\lambda\in\R$ depending on $\overline\P, \ell$ and $\eta$. Unlike Wasserstein space $\mathcal P_2(\R^{d+2})$, the space of measures on the cone $\P_2(\overline\Theta)$ not geodesically convex due to the non-convexity of $\Theta$. We therefore use Lemma \ref{lemma extension of phi} to extend $\phi$ to the whole space $\R^{d+2}$. Without the extension, Wasserstein space is not geodesically convex and we cannot explicitly characterize geodesics. We will later show that if $\spt(\pi_0)\subseteq \overline\Theta$, then $\spt(\pi_t) \subseteq \overline\Theta$ for all $t\geq 0$.

Let $\pi_0, \pi_1 \in \mathcal P_2(\R^{d+2})$. Denote by $\pi_t$, $t\in[0,1]$ any unit speed geodesic between $\pi_0$ and $\pi_1$, i.e.\
\[
\pi_t = \big[t\,P^\theta + (1-t)\,P^{\theta'}\big]_\sharp\gamma
\]
where $P^\theta, P^{\theta'}$ are the projection from $\R^{d+2}\times \R^{d+2}$ to the first and second components respectively, $F_\sharp\mu$ denotes the push-forward of the measure $\mu$ along the map $F$, and $\gamma$ is an optimal transport plan between $\pi_0$ and $\pi_1$. 

It suffices to show that $h(t) := \Risk(\pi_t)$ is $\lambda\,W_2^2(\pi_0, \pi_1)$-convex on $[0,1]$, i.e.\ $h''\geq -\lambda\,W^2_2(\pi_0,\pi_1)$. We prove a stronger statement, namely that $h'$ is $\lambda\,W_2^2(\pi_0,\pi_1)$-Lipschitz, which can be thought of as a type of Hessian bound from both sides instead of just one side. Note that
\begin{align*}
\frac{d}{dt}f_{\pi_t}(x) &= \frac d{dt} \int_{\R^{d+2}\times\R^{d+2}} \phi(t\theta + (1-t)\theta'; x)\,\gamma(\d\theta\otimes\d\theta')\\
	&=\int_{\R^{d+2}\times\R^{d+2}} \big\langle (\nabla_\theta\phi)(t\theta + (1-t)\theta'; x), \:\theta-\theta'\big\rangle \,\gamma(\d\theta\otimes\d\theta')\\
h'(t) &= \frac{d}{dt} \Risk(\pi_t) \\
	&= \frac{d}{dt} \int_{\R^d\times \R } \ell \big(f_{\pi_t}(x), y\big)\,\P(\d x \otimes\d y)\\
	&= \int_{\R^d\times \R } (\partial_1\ell) \big(f_{\pi_t}(x), y\big)\,\frac{d}{dt}f_{\pi_t}(x)\,\P(\d x \otimes\d y)\\
	&= \int_{\big(\R^{d+2}\big)^2} \left\langle \int_{\R^d\times \R } (\partial_1\ell) \big(f_{\pi_t}, y\big) (\nabla_\theta\phi)(t\theta + (1-t)\theta'; x)\,\P(\d x\otimes \d y), \:\theta-\theta'\right\rangle\gamma(\d\theta\otimes\d\theta').
\end{align*}
Due to Lemma \ref{lemma extension of phi}, the map
\[
\theta\mapsto \int_{\R^d\times \R } (\partial_1\ell)(f_\pi(x),y)\,(\nabla_\theta\phi)(\theta,x)\,\P(\d x\otimes \d y)
\]
is $L\,\|\partial_1\ell\|_{L^\infty(\P)}$-Lipschitz, and $\|\partial_1\ell\|_{L^\infty}$ is bounded uniformly by the Lipschitz-condition on the loss function. Consequently
\begin{align*}
|h'(t_1) - h'(t_0)| &\leq C \int \big|t_1\theta + (1-t_1)\theta' - t_0\theta - (1-t_0)\theta'\big|\,|\theta-\theta'|\,\gamma(\d\theta\otimes \d\theta')\\
	&= C\,|t_1-t_0|\,W_2^2(\pi,\pi').
\end{align*}
It follows that $\Risk$ is in fact $\lambda$-displacement convex for a universal constant $\lambda$ which only depends on $\phi, \ell$. 

{\bf Step 2.} Existence of the gradient flow follows directly from \cite[Theorem 11.2.1]{ambrosio2008gradient}.

{\bf Step 3.} In this step, we show that the gradient flow of $\Risk$ preserves the cone $\Theta$, i.e.\ if $\pi_0\in \mathcal{P}_2(\overline\Theta)$, then $\pi_t\in \mathcal{P}_2(\overline\Theta)$ for all $t\geq 0$. 
Note that the flow field
\[
(t, \theta)\mapsto -\nabla(\delta_\pi\Risk ) \big(\pi_t; \theta)
\] 
is Lipschitz-continuous in $\theta$ with a uniform Lipschitz constant for all times by Lemma \ref{lemma extension of phi}. Like in Section \ref{section continuity eqn}, we find that $\pi_t = X(t,\cdot)_\sharp \pi_0$ where $X$ is the flow map defined by
\[
\begin{pde}
\frac{d}{dt}X(t,\theta) &= -\nabla(\delta_\pi\Risk ) \big(\pi_t; X(t,\theta)\big) &t>0\\
X(0,\theta) &= \theta
\end{pde}.
\]
It thus suffices to show that $X(t,\theta)\in \Theta$ for every $\theta\in\Theta$. This is immediate since 
\begin{align*}
\dot X(t,\theta) &= -\nabla (\delta_\pi\Risk )(\pi_t; X(t,\theta))\\
	&= -\int_\Theta (\partial_1\ell)\big(f_{\pi_t}(x),y\big)\,(\nabla_\theta\phi)(\theta,x)\,\P(\d x \otimes \d y)
\end{align*}
is parallel to $\partial\Theta$ on the boundary since $\nabla_\theta\phi(\theta,x)$ is parallel to $\partial\Theta$ at $\theta$ whenever it is defined -- see also Section \ref{section initial}.
\end{proof}

We now prove that the gradient flow preserves the omni-directionality of measures (for finite positive time).

\begin{proof}[Proof of Lemma \ref{lemma omni-directional}]
{\bf Preliminary analysis.} Again, we use the flow map $X$. Since $[\nabla (\delta_\pi\Risk)]_{\mathrm{Lip}}\leq C$, we deduce from Lemma \cite[Lemma 4]{ambrosio2008transport} that $X$ satisfies
\[
[X(t,\cdot)]_{\mathrm{Lip}} \leq \exp \left(\int_0^t  C\ds\right) \leq \exp\big(Ct\big).
\]
Since we can solve the ordinary differential equation backwards in time for any $\theta\in \Theta$, $X(t,\cdot):\Theta\to \Theta$ is a bi-Lipschitz homeomorphism. 

Finally, we note that $X(t,\lambda\theta) = \lambda X(t,\theta)$ for all $\lambda, t>0$ since
\[
\frac{d}{dt} \lambda\,X(t,\theta) = -\lambda\,\nabla(\delta_\pi\Risk )(\pi_t; \theta) = - \nabla (\delta_\pi\Risk )(\pi_t; \lambda\theta)
\]
due to the homogeneity of $\phi$. Thus, the flow preserves half-rays and cones.

{\bf Proof of omni-directionality.} Consider an open cone $C\subseteq\Theta$. Then by \cite[Lemma 4]{ambrosio2008transport}, we have 
\[
\pi_t(C) =
 \int_{\Theta}1_C(X(t,\theta))\,\pi_0(\d\theta) = \int_\Theta 1_{X(t,\cdot)^{-1}(C)}(\theta)\,\pi_0(\d\theta) = \pi_0\big(X(t,\cdot)^{-1}(C)\big)>0
\]
since also $X(t,\cdot)^{-1}(C)$ is an open cone in $\Theta$.
\end{proof}

\begin{remark}\label{remark flow map}
The analysis of the flow map shows more. Since rays are preserved, the projected measures $P^{S^{d+1}}_\sharp(\pi_t)$ on the unit sphere $S^{d+1}\cap \Theta$ evolve by the continuity equation
\[
\frac{d}{dt}P^{S^{d+1}}(\pi_t) = \div \left(\left(P^{S^d}_\sharp \pi_t\right)\cdot \nabla^{S^d}(\delta_\pi\Risk )(\pi_t, \cdot)\right)
\]
where $\nabla^{S^{d+1}} f(\theta) = (I-\theta\otimes\theta)\nabla f(\theta)$ is the tangential gradient to the unit sphere. In particular, if $P^{S^{d+1}}_\sharp \pi_0$ has a density $\rho_0$ with respect to the uniform measure on $S^{d+1}\cap \Theta$, then $P^{S^{d+1}}_\sharp \pi_t$ has a density $\rho_t$ and
\begin{align*}
\exp\big(- Ct\big)&\inf_{\theta\in \Theta\cap S^{d+1}} \rho_0(\theta) \leq \inf_{\theta\in \Theta\cap S^{d+1}} \rho_t(\theta)
\leq \sup_{\theta\in \Theta\cap S^{d+1}} \rho_t(\theta) \leq \exp\big(Ct)\big)\sup_{\theta\in \Theta\cap S^{d+1}} \rho_0(\theta).
\end{align*}
\end{remark}

Next we show that the second moment of $\pi_t$ grows at most sublinearly in time. 

\begin{proof}[Proof of Lemma \ref{lemma sublinear growth}]
We compute
\begin{align*}
\frac{d}{dt} N(\pi_t) &= \frac{d}{dt} \int_\Theta |\theta|^2\,\pi_t(\d\theta)\\
	&= -\int_\Theta \langle \nabla|\theta|^2, \nabla_\theta (\delta_\pi\Risk )\rangle \,\pi_t(\d\theta)\\
	&= -2 \int_\Theta \langle \theta, \nabla_\theta(\delta_\pi\Risk )\rangle\,\pi_t(\d\theta)\\
	&\leq 2 \left(\int_\Theta|\theta|^2 \,\pi_t(\d\theta)\right)^\frac12 \left(\int_\Theta|\nabla_\theta(\delta_\pi\Risk )|^2 \,\pi_t(\d\theta)\right)^\frac12\\
	&= 2 N(\pi_t)^{1/2} \left|\frac{d}{dt}\,\Risk(\pi_t)\right|^{1/2},
\end{align*}
i.e.\ 
\[
\frac{d}{dt} \sqrt{N(\pi_t)} \leq \left|\frac{d}{dt}\,\Risk(\pi_t)\right|^{1/2}.
\]
Note that $\pi_t$ has finite second moments, so the quadratic test function $|\cdot|^2$ in the variational formulation is admissible. If $\pi_0$ is compactly supported, then so is $\pi_t$ for all $t>0$ by the flow map representation and the linear growth of the flow field at infinity. In this situation, the identity is obvious. Otherwise, the argument is easily justified by using approximating test functions $\eta(R-|\theta|)\,|\theta|^2$ where $\eta$ is a smooth cutoff function satisfying $\eta'\geq 0$, $\eta(z) = 0$ for $z\leq 0$ and $z=1$ for $z\geq 1$. 
Thus for every $0< T<t$ we have
\begin{align*}
\sqrt{N(\pi_t)} &= \sqrt{N(\pi_T)} + \int_T^t \frac{d}{ds} \sqrt{N(\pi_s)}\ds\\
	&\leq \sqrt{N(\pi_T)} + \int_T^t\left|\frac{d}{ds}\,\Risk(\pi_s)\right|^{1/2}\ds\\
	&\leq \sqrt{N(\pi_T)} + \left(\int_T^t1\ds\right)^\frac12 \left(\int_T^t\left|\frac{d}{ds}\,\Risk(\pi_s)\right|\ds\right)^\frac12\\
	&= \sqrt{N(\pi_T)} + \sqrt{t-T} \,\left[\Risk(\pi_T) - \Risk(\pi_t)\right]^{1/2}
\end{align*}
and therefore
\begin{equation}\label{eq sublinear moment bound}
N(\pi_t) \leq 2 \big[N(\pi_T) + (t-T)\,\left[\Risk(\pi_T) - \Risk(\pi_t)\right]\big].
\end{equation}
Since $\Risk(\pi_t)$ is monotone decreasing and bounded from below (by zero), $\Risk(\pi_t)$ converges to a limit. Thus, for every $\eps>0$ we can choose $T>0$ such that $0 < \Risk(\pi_T) - \Risk(\pi_t) < \eps$ for every $t>T$ and hence
\[
\limsup_{t\to\infty} \frac{N(\pi_t)}{t} \leq 2  \limsup_{t\to \infty} \left[\Risk(\pi_T) - \Risk(\pi_t)\right] \leq 2\eps.
\]
\end{proof}

Note that the proof applies in great generality to models with a linear structure. The next proof concerns the exponential growth of second moments if the velocity potential converges to a non-trivial limit. It is adapted from \cite{chizat2018global} and repeated in this context for the reader's convenience. The following proof is the only point at which the Morse-Sard property is used in this article. It is also the only argument which hinges on the omni-directionality of the initial parameter distribution.

\begin{proof}[Proof of Lemma \ref{lemma cone argument}]
If $g\not\equiv 0$, there exists $(a,w,b)\in \R^{d+2}$ such that $g(a,w,b) \neq 0$. Since $g$ is linear in $a$ and positively two-homogeneous in $(a,w,b)$, there exists $\theta\in \Theta$ such that $g(\theta)\neq 0$. Since $g(-a, w,b) = - g(a,w,b)$, there therefore exist $\eps>0, \theta\in\Theta$ such that $g(\theta)< -2\eps\,|\theta|^2$. Thus there exists an open cone $C$ in $\R^{d+2}$ such that
\begin{enumerate}
\item $C\cap \Theta\neq \emptyset$,
\item $g(\theta) < -\eps\,|\theta|^2$ for all $\theta\in C$, and
\item $\langle \nabla g, \nu_C \rangle > 0$ does not vanish on $\partial C$ where $\nu$ is the inner normal vector to $\partial C$.
\end{enumerate}

Using Assumption \ref{assumption Sard}, $C$ can for example be chosen as
\[
C = \left\{\theta \in \R^{d+2} : \frac{g(\theta)}{|\theta|^2} < -t\right\} 
\]
for some $t\in (\eps, 2\eps)$. We define the localized second moments 
\[
N_C(\pi):= \int_C |\theta|^2\,\pi(\d\theta).
\]
Since $\partial C\cap S^{d+1}$ is compact and $\nabla (\delta_\pi \Risk)\to \nabla g$ locally uniformly, we find that there exists $T_0>0$ such that $\langle \nabla (\delta_\pi\Risk)(\pi_t;\cdot), \nu_C\rangle >0$ on $\partial C$ for all $t>T_0$. In particular, no mass flows out of $C$ after time $T_0$: If $X_\theta(T_0) \in C$ then also $X_\theta(t)\in C$ for $t>T_0$. Thus
\[
N_C(\pi_t) = \int_C |\theta|^2\,X(t)_\sharp \pi_0 = \int_{X(t)^{-1}(C)} \big|X_\theta(t)\big|^2\,\pi_0(\d\theta)\geq \int_{X(T_0)^{-1}(C)}\big|X_\theta(t)\big|^2\,\pi_0(\d\theta).
\]
Secondly since $(\delta_\pi\Risk)(\pi_t,\cdot) \to g$ locally uniformly, there exists $T_1>0$ such that 
\[
(\delta_\pi\Risk)(\pi_t;\theta)\leq -\frac\eps2\,|\theta|^2
\]
for all $t>T_1$. Without loss of generality, we assume that $T_1=T_0$. In particular
\[
\frac{d}{dt}\,|X_\theta(t)|^2 = \langle X_\theta(t), -\nabla (\delta_\pi\Risk)\big(\pi_t; X_\theta(t)\big) = -2\,(\delta_\pi\Risk)\big(\pi_t; X_\theta(t)\big) \geq \eps\,\big|X_\theta(t)\big|^2
\]
for $t>T_0$ and $\theta \in X(T_0)^{-1}(C)$, using the positive two-homogeneity of $(\delta_\pi\Risk)$. Thus 
\[
|X_\theta(t)|^2 \geq |X_\theta(T)|^2\,\,e^{\eps(t-T_0)}
\]
for $t>T_0$ and $\theta \in X(T_0)^{-1}(C)$, and consequently
\begin{align*}
N(\pi_t) &\geq N_C(\pi_t)\\
	& \geq \int_{X(T_0)^{-1}(C)}\big|X_\theta(t)\big|^2\,\pi_0(\d\theta) \\
	&\geq \int_{X(T_0)^{-1}(C)}\big|X_\theta(T_0)\big|^2\,e^{\eps(t-T_0)}\,\pi_0(\d\theta)\\
	&= e^{\eps(t-T_0)}\,N_C(\pi_{T_0}).
\end{align*}
If $\pi_0$ is omni-directional, then so is $\pi_{T_0}$ and $N_C(\pi_{T_0})>0$.
\end{proof}

\begin{remark}
Morally, assumption \ref{assumption Sard} is used to control the sign of a boundary flux term. Without an assumption of this type, the term would at most be asymptotically non-negative. It would be necessary to control the size of the boundary term by a volume contribution and its asymptotic behavior. 
\end{remark}

\subsection{Proofs from Section \ref{section main}}

We use these results to prove the main theorem.

\begin{proof}[Proof of Theorem \ref{theorem main}]
{\bf Step 1.} Since $\delta_\pi \Risk$ and $\nabla\delta_\pi\Risk$ are positively two- and one-homogeneous respectively, we find that $(\delta_\pi\Risk)(\pi, 0) = 0$ and $\nabla (\delta_\pi \Risk)(\pi,0) = 0$ for any $\pi \in \mathcal P_2$. According to Lemma \ref{lemma extension of phi}, we may assume a uniform Lipschitz bound on $\nabla (\delta_\pi \Risk)(\pi_t,\cdot)$. This implies a uniform Lipschitz bound also on $\delta_\pi\Risk$ on bounded sets. We thus conclude that $(\delta_\pi\Risk)(\pi_t,\cdot)$ and $\nabla (\delta_\pi\Risk)(\pi_t,\cdot)$ have convergent subsequences, since Lipschitz-space embeds compactly into the space of continuous functions by the Arzel\`a-Ascoli theorem.

{\bf Step 2.} First, assume that $\Omega^{lim}$ consists of only one element $(g,V)$. Then either $g\not\equiv 0$ or $g\equiv0$. In the first case, we have by Lemma \ref{lemma cone argument} that $N(\pi_t)$ grows exponentially in time, contradicting Lemma \ref{lemma sublinear growth}. In the second case, we use homogeneity to show that
\begin{align*}
-\frac{d}{dt} N(\pi_t) &= \int_\Theta \langle \nabla|\theta|^2, \nabla (\delta_\pi\Risk )\rangle \,\pi_t(\d\theta)\\
	&= 2\int_\Theta \langle \theta, \nabla (\delta_\pi\Risk )\rangle \,\pi_t(\d\theta)\\
	&= 4\int_\Theta \delta_\pi\Risk (\theta)\,\pi_t(\d\theta)\\
	&= 4\int_\Theta \int_{\R^d\times\R } (\partial_1\ell)\big(f_{\pi_t}(x),y\big)\,\phi(\theta,x)\,\P(\d x \otimes \d y)\,\pi_t(\d\theta)\\
	&= 4 \int_{\R^d\times\R } (\partial_1\ell)\big(f_{\pi_t}(x),y\big)\,\int_\Theta \phi(\theta,x)\,\pi_t(\d\theta)\,\P(\d x \otimes \d y)\\
	&= 4\int_{\R^d\times\R }  (\partial_1\ell)\big(f_{\pi_t}(x),y\big)\,f_{\pi_t}(x)\,\P(\d x \otimes \d y)\\
	&= 4\int_{\R^d\times\R } \ell\big(f^*(x),y\big)+  (\partial_1\ell)\big(f_{\pi_t}(x),y\big)\,\big(f_{\pi_t}-f^*\big)(x)\,\P(\d x \otimes \d y)\\
	&\qquad - 4\,\widetilde \Risk (f^*) + 4\int_{\R^d\times\R } (\partial_1\ell)\big(f_{\pi_t}(x),y\big)\,f^*(x)\,\P(\d x \otimes \d y)\\
	&\geq 4\int_{\R^d\times\R } \ell\big(f_{\pi_t}(x),y\big)\P(\d x \otimes \d y) - 4\,\widetilde \Risk (f^*)\\
	&\qquad + 4\int_{\R^d\times\R }(\partial_1\ell)\big(f_{\pi_t}(x),y\big)\,f^*(x)\,\P(\d x \otimes \d y)\\
	&= 4\left[ \widetilde \Risk (f_{\pi_t}) - \widetilde \Risk (f^*)\right] + 4\int_{\R^d\times\R } (\partial_1\ell)\big(f_{\pi_t}(x),y\big)\,f^*(x)\,\P(\d x \otimes \d y).
\end{align*}
due to the first-order convexity condition for $\ell$ in the first argument. The integrals converge since $\partial_1\ell\in L^\infty(\overline\P)$ and $f^*\in L^1(\overline\P)$ by Assumption \ref{assumption minimizer L1}. We know that 
\[
\int_{\R^d\times\R } (\partial_1\ell)\big(f_{\pi_t}(x),y\big)\,\phi(\theta,x)\,\P(\d x \otimes \d y)\to g(\theta) = 0\qquad\forall\ \theta\in\Theta.
\]
Using assumption \ref{assumption density in L1}, we conclude that
\[
\int_{\R^d\times\R} (\partial_1\ell)\big(f_{\pi_t}(x),y\big)\,f^*(x)\,\P(\d x \otimes \d y)\to 0
\]
as $t\to\infty$ since the span of $\{\phi(\theta,\cdot):\theta\in\Theta\}$ is dense in $L^1(\P)$. Thus
\[
\liminf_{t\to\infty} \frac{d}{dt} N(\pi_t) \leq 4\lim_{t\to\infty}\left[ \widetilde \Risk (f_{\pi_t}) - \widetilde \Risk (f^*)\right]\leq 0
\]
because $\widetilde\Risk(f^*) = \inf_{\pi} \Risk(\pi)$. Since $N$ is bounded from below by zero, it cannot decrease linearly at a fixed non-zero rate for all large arguments. We conclude that 
\[
\lim_{t\to\infty} \Risk(\pi_t) = \widetilde\Risk(f^*) = \inf_{\pi} \Risk(\pi).
\]
{\bf Step 3.} Now, assume that $\lim_{t\to\infty} \Risk(\pi_t) = \widetilde \Risk (f^*)$, i.e.\
\[
\lim_{t\to\infty} \int_{\R^d} L_x(f_{\pi_t}(x)) - L_x(f^*(x))\,\overline\P(\d x) = 0
\]
where $L$ is the {\em augmented loss function} discussed in Section \ref{section risk functional}.
Since the first integrand is always larger than the second one, their difference is positive and thus we conclude that
\[
L_x(f_{\pi_t}(x)) - L_x(f^*(x))\to 0 \qquad\text{in }L^1(\overline\P).
\]
At least for a subsequence $t_n\to \infty$, this means that the convergence holds pointwise almost everywhere. Since $L_x(f_{\pi_{t_n}}(x)) \to L_x(f^*(x))$ $\overline\P$-almost everywhere and $L_x(\alpha)\to\infty$ if $|\alpha|\to\infty$, the minimizing sets $M_x$ are compact and that $\dist(f_{\pi_{t_n}}(x), M_x)\to 0$. The pointwise limit of the functions is measurable, and thus without loss of generality $f^*(x) = \lim_{n\to\infty} f_{\pi_{t_n}}(x)$. By the continuity of $\partial_1\ell$, we conclude that
\[
(\partial_1\ell)\big(f_{\pi_{t_n}}(x),y\big) \to (\partial_1\ell)\big(f^*(x),y\big) 
\]
almost everywhere. We recall that
\[
L_x'(f^*(x)) = \int_\R (\partial_1\ell)\big(f^*(x),y\big)\,\P^x(\d y) = 0
\]
for $\overline\P$-almost all $x$ and thus in particular
\[
\int_{\R^d\times\R } (\partial_1\ell)\big(f^*(x),y\big)\,\phi(\theta,x)\,\P(\d x \otimes \d y) = 0\qquad\forall\ \theta\in \Theta.
\]
Since $\phi(\theta,\cdot) \in L^1(\overline\P)$ for any $\theta\in \overline\Theta$, we conclude from the dominated convergence theorem that
\[
\int_{\R^d\times\R } (\partial_1\ell)\big(f_{\pi_{t_n}}(x),y\big)\,\phi(\theta,x)\,\P(\d x \otimes \d y) \to \int_{\R^d\times\R } (\partial_1\ell)\big(f^*(x),y\big)\,\phi(\theta,x)\,\P(\d x \otimes \d y) =0
\]
using the bound
\[
(\partial_1\ell)\big(f_{\pi_{t_n}}(x),y\big)\,\phi(\theta,\cdot) \leq \|\partial_1\ell\|_{L^\infty}\,|\phi(\theta,\cdot)|.
\] 
Thus $\lim_{n\to\infty} \delta_\pi \Risk(\pi_{t_n},\cdot)\equiv 0$. By compactness, we know that $\Omega^{lim}$ is non-empty, and we have showed that for any sequence, we can extract a subsequence for which $g=0$. Since locally uniform convergence is generated by a topology, this means that $g$ is the only possible limit point. The same argument can be used for the gradient.
\end{proof}

\begin{remark}
Note that omni-directionality is only used to exclude the case that $g\not\equiv 0$, whereas $g\equiv 0$ is only an admissible limit if $\Risk(\pi_t)$ decays to MBR. This corresponds to the fact that $\delta_\pi\Risk(\pi,\cdot)\equiv 0$ if and only if $\pi$ is a global minimizer of $\Risk$, see \cite[Proposition 3.1]{chizat2018global}.
\end{remark}

\begin{remark}
Omni-directionality of the initial condition \ref{assumption omnidirectional} and the Morse-Sard property are both involved only in excluding a unique limit $g\not\equiv 0$. They could therefore be replaced for example by the zero-limit assumption
\begin{itemize}[label=(ZL), ref=(ZL)]\setcounter{enumi}{0}
\item If $(\delta_\pi\Risk)(\pi_t,\cdot) \to g$ locally uniformly on $\overline\Theta$ and $g\not\equiv 0$, then
\[
\limsup_{t\to \infty} \frac{N(\pi_t)}t >0.
\]
\end{itemize}
\end{remark}

The following two proofs establish the corollaries to the main theorem concerning minimizers.

\begin{proof}[Proof of Corollary \ref{corollary minimizer 1}]
If we assume in addition that there exists a probability measure $\pi_\infty$ and a sequence of times $t_n\to \infty$ such that $\lim_{n\to\infty} W_2(\pi_{t_n},\pi_\infty) = 0$, then we find by \cite[Theorem 6.9]{villani2008optimal} that
\[
f_{\pi_\infty} = \lim_{n\to\infty}f_{\pi_{t_n}} = f^*.
\]
Thus $\pi_\infty$ minimizes $\Risk$. 
\end{proof}

\begin{proof}[Proof of Corollary \ref{corollary minimizer 2}]
Consider the measures $\mu_t$ on $S^{d+1}\cap\Theta$ defined by
\[
\int_{S^{d+1}\cap \Theta} g(\theta)\,\mu_t(\d\theta) = \int_{\Theta} g\left(\frac\theta{|\theta|}\right)\,|\theta|^2\,\pi_t(\d\theta)
\]
for $g\in C(\overline{S^{d+1}\cap\Theta})$, or equivalently 
\[
\mu_t = P^{S^{d+1}}_{\sharp}\left(|\theta|^2\cdot\pi_t\right)
\]
where $P^{S^{d+1}}:\Theta\to S^{d+1}\cap\Theta$ is the canonical projection. While $P^{S^{d+1}}$ is undefined at $\theta=0$, the projection of the measure with weight $|\theta|^2$ is well-defined. 
Under the moment bound assumption, the measures $\mu_t$ are uniformly bounded and
\[
\int_{S^{d+1}\cap\Theta} \phi(\theta,x)\,\mu_t(\d\theta) = \int_{\Theta} \phi(\theta,x)\,\pi_t(\d\theta) = f_{\pi_t}(x)
\]
for all $x\in U$. By the compactness theorem of Radon measures, there exists a finite Radon measure $\mu_\infty$ on $\overline{\Theta}\cap S^{d+1}$ and a sequence of times $t_n\to\infty$ such that $\lim_{n\to\infty}\tilde\pi_{t_n}= \mu_\infty$. On the closed cone $\overline\Theta$, we can apply the weak convergence of Radon measures to conclude that
\[
\int_{S^{d+1}\cap\overline\Theta} \phi(\theta,x)\,\mu_\infty(\d\theta) = \lim_{n\to\infty} \int_{S^{d+1}\cap\Theta} \phi(\theta,x)\,\mu_{t_n}(\d\theta) = f^*(x)
\]
$\overline\P$-almost everywhere. The measure $\mu_\infty$ is non-negative since all $\mu_{t_n}$ are. We now distinguish two cases:

\begin{enumerate}
\item $\mu_\infty = 0$. In this case $f^*\equiv 0$ and $\pi_\infty:= \delta_{\theta = 0}$ is a risk minimizer.
\item $\mu_\infty\neq 0$. In this case, we define the dilation map $D = \sqrt{\mu_\infty(\Theta)}\,I$ and 
\[
\pi_\infty := \frac1{\mu_\infty(\Theta)}\,D_\sharp \mu_\infty.
\]
Then by homogeneity
\[
f_{\pi_\infty} (x)= \int_\Theta \phi(\theta,x)\d\pi_\infty = \frac1{\tilde\pi_\infty(\Theta)}\int_{S^{d+1}\cap\Theta}\phi\left(\sqrt{\tilde\pi_\infty(\Theta)}\,\theta,x\right)\,\mu_\infty(\d\theta) = f^*(x),
\]
for all $x\in U$, so $\pi_\infty$ is a risk minimizer.
\end{enumerate}
\end{proof}

\begin{remark}
Considering the example
\[
\pi_n := \frac1{n}\,\delta_{\theta = \sqrt{n}e_1} + \left(1-\frac1n\right)\,\delta_{\theta=0}, \quad \pi_\infty = \delta_{\theta=0}, \qquad f_{\pi_n} \equiv \phi(e_1,\cdot)\neq 0 = f_{\pi_\infty}.
\]
after Corollary \ref{corollary minimizer 2}, we observe that 
\[
\mu_n:= P^{S^{d+1}}_\sharp\big(|\theta|^2\cdot\pi_n\big)  \equiv \delta_{\theta=e_1}.
\]
Thus the projection to the unit sphere adds compactness beyond the moment bound.
\end{remark}

\section{The Morse-Sard Property}\label{appendix sard}

The classical Morse-Sard theorem states that the set of critical values of a function $f\in C^k(\R^n,\R^m)$ has zero Lebesgue measure if $k\geq \max\{n-m+1, 1\}$, where $y\in \R^m$ is a critical value of $f$ if there exists $x\in f^{-1}(y)$ such that $Df(x)$ does not have full rank. Combined with the regular value theorem, it is a powerful tool in smooth approximation.

The result is due to Morse for $m=1$ \cite{morse1939behavior} and Sard for general $m\geq 1$ \cite{sard1942measure}. It is easy to extend the result to sufficiently differentiable manifolds, and there are more precise statements available for the Hausdorff measure of $f(S_\nu)$ where
\[
S_\nu = \{x\in \R^n\:|\: \mathrm{rank}(Df_x)\leq \nu\}
\]
for $0\leq\nu\leq n-1$, see \cite[Theorem 3.4.3]{federer66geometric}. In the classical setting, the differentiability assumptions are almost sharp, and the condition can only be weakened to $C^{k-1,1}$ in place of $C^k$ \cite{bates1993toward}. A classical example of Whitney \cite{whitney1935function} which has been generalized to higher dimension e.g.\ in \cite{hajlasz2003whitney} shows that there exists $f\in C^1(\R^2,\R)$ such that $f$ is non-constant on a connected component of its set of critical points, meaning that there is an open interval of critical points.

Morse-Sard theorems are known to fail in infinite dimension even for infinitely smooth maps, unless additional assumptions are imposed. Under weak conditions, however, the set of functions for which the Theorem holds is dense in the $C^0$-topology \cite{eells1968approximate}.

Due to its fundamental importance, some effort has been made to establish Morse-Sard type properties in other function classes. Among these are

\begin{itemize}
\item Morse-Sard theorems in classes of weakly differentiable functions \cite{figalli2008simple,bourgain2010morse, bourgain2015morse, de2001morse,korobkov2018trace}. Here the relation between differentiability and integrability may even be chosen low enough to ensure continuity, but not classical differentiability of the functions under consideration.
\item Morse-Sard theorems for the distance from a submanifold \cite{rifford2004morse} or more generally Lipschitz functions which are given as suprema of smooth functions over suitable index sets \cite{barbet2016sard}.
\item Morse-Sard theorems for subanalytic functions, see \cite{bolte2006nonsmooth}.
\item Morse-Sard theorems for dc functions in two dimensions \cite{pavlica2006morse}. A function is dc if it can be written as differences of two convex functions. In particular, every $C^2$-function with bounded second derivatives is dc. Thus the result cannot be generalized to higher dimensions.
\end{itemize}

In non-smooth function classes, a notion of gradient almost everywhere (with respect to a suitable Hausdorff measure) or a sub-differential is used. 

We show below that \ref{assumption Sard} holds unconditionally in dimension $d=2$. 
In \cite{Chizat:2020aa}, the Morse-Sard theorem for subanalytic functions from \cite{bolte2006nonsmooth} has successfully been used to establish a condition of Morse-Sard property in a very similar application. The subanalytic function that the authors consider in \cite{Chizat:2020aa} is a finite sum of ReLU-like terms and the subanalyticity stems from the finiteness of the sum. The number of summands corresponds to the number of data samples in an empirical measure. The approach is therefore incompatible with assumption \ref{assumption RL1} for the ReLU case.

\subsection{Finitely many neurons}

While it does not apply in our situation, we briefly sketch the result and its application in a similar situation.
Consider $f(x) = \sum_{i=1}^m a_i (w_i^Tx+b_i)_+$. Then $f$ is only Lipschitz-smooth and not $C^1$, but has the Morse-Sard property due to its sub-analyticity. 

\begin{definition}
\begin{enumerate}
\item A set $A\subseteq \R^n$ is called semi-analytic if for every $x\in A$ there exists a neighbourhood $U$ of $x$ and a collection of real analytic functions $f_{ij}, g_{ij}$ such that
\[
A\cap U = \bigcup_{i=1}^N\bigcap_{j=1}^{M_i} \{f_{ij}>0, g_{ij}=0\}.
\]
\item A set $A\subseteq \R^n$ is called sub-analytic if for every $x\in A$ there exists a neighbourhood $U$ of $x$, a semi-analytic set $\tilde A$ in $\R^m$ for $m\geq n$ and an affine map $L:\R^m\to\R^n$ such that $A= L(\tilde A)$.

\item A function $f$ is called sub-analytic if its graph $G_f = \{(x,y)\in \R^{n+1}\:|\:y=f(x)\}$ is sub-analytic.
\end{enumerate}
\end{definition}

We recall the following Morse-Sard theorem.

\begin{theorem}\cite[Theorem 13]{bolte2006nonsmooth}
Let $f:\R^n\to\R$ be a continuous sub-analytic function. Then $f$ is constant on connected components of the set of critical points (defined by the sub-differential), and the set of critical values is countable.
\end{theorem}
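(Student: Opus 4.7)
The plan is to reduce this non-smooth statement to the classical Morse-Sard theorem by exploiting the piecewise-analytic structure of sub-analytic sets. First I would invoke the stratification theorem: any continuous sub-analytic $f:\R^n\to\R$ admits a locally finite decomposition $\R^n=\bigsqcup_\alpha S_\alpha$ into connected analytic submanifolds such that each restriction $f|_{S_\alpha}$ is real-analytic and the family $\{S_\alpha\}$ satisfies Whitney's regularity conditions (a) and (b). This is a standard consequence of the triangulation/stratification theory of sub-analytic sets.

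Second, I would describe the Clarke sub-differential in terms of the stratification. Using Whitney (a) regularity one shows that for $x\in S_\alpha$, limits of ordinary gradients $\nabla f(x_k)$ at smooth points $x_k\to x$ lying in adjacent higher-dimensional strata project onto $\nabla(f|_{S_\alpha})(x)$, and in particular $0\in\partial f(x)$ forces $\nabla(f|_{S_\alpha})(x)=0$. Thus the critical set $C:=\{0\in\partial f\}$ is contained in the disjoint union $\bigsqcup_\alpha \mathrm{Crit}(f|_{S_\alpha})$. Applying the classical Morse-Sard theorem stratum by stratum (in fact, by analyticity, $f|_{S_\alpha}(\mathrm{Crit}(f|_{S_\alpha}))$ is either a single value or a discrete set in $\R$) and summing over the countable family of strata yields the countability of $f(C)$.

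Finally, for constancy of $f$ on connected components of $C$, I would use the curve selection lemma for sub-analytic sets: two points in the same connected component of $C$ can be joined by a continuous sub-analytic curve $\gamma:[0,1]\to C$. Refining $\gamma$ using Whitney (b), one may arrange that $[0,1]$ splits into finitely many subintervals on each of which $\gamma$ stays in a single stratum $S_{\alpha(i)}$; then $f\circ\gamma$ is analytic on each subinterval with $(f\circ\gamma)'=\langle\nabla(f|_{S_{\alpha(i)}}),\gamma'\rangle\equiv 0$, so $f\circ\gamma$ is constant on each piece and, by continuity of $f$ across stratum boundaries, constant overall.

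The main obstacle will be the last paragraph: arranging the curve selection so that transitions between strata are genuinely compatible with a chain rule, since $f$ is only continuous, not Lipschitz. The cleanest way is probably to work with a sub-analytic triangulation of $C$ (which exists by the triangulation theorem for sub-analytic sets) and parametrize connected components by simplicial complexes whose closed simplices live entirely in single strata of a refined stratification; constancy on each simplex is then immediate from the analytic Morse-Sard step, and constancy on the whole component follows by pasting along faces using continuity.
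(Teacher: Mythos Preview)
The paper does not prove this theorem; it is quoted verbatim as \cite[Theorem~13]{bolte2006nonsmooth} and used as a black box to verify the Morse--Sard property for finite ReLU networks. There is therefore no ``paper's own proof'' to compare your proposal against.

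As a standalone sketch of the Bolte--Daniilidis--Lewis argument, your outline is broadly in the right spirit (stratification plus stratum-wise analysis is indeed the engine behind such results), but two steps are underspecified. First, the implication ``$0\in\partial f(x)$ forces $\nabla(f|_{S_\alpha})(x)=0$'' is not automatic: the Clarke subdifferential is a convex hull of limiting gradients from many adjacent strata, and having $0$ in that hull does not by itself force the tangential gradient along the stratum containing $x$ to vanish. One needs a projection formula for $\partial f$ relative to a Whitney stratification (this is where the actual work in \cite{bolte2006nonsmooth} lies). Second, your claim that $f|_{S_\alpha}(\mathrm{Crit}(f|_{S_\alpha}))$ is discrete for a real-analytic function on a connected manifold is correct but is itself a nontrivial fact about real-analytic maps (it follows e.g.\ from Soury's or {\L}ojasiewicz's results, not from the classical $C^k$ Morse--Sard theorem alone). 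Your treatment of constancy on components via curve selection and triangulation is reasonable, and you correctly flag the chain-rule issue at stratum transitions as the delicate point.
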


We show that this applies to finite ReLU networks. 

\begin{lemma}
$f(x) = \sum_{i=1}^m a_i (w_i^Tx+b_i)_+$ is continuous and sub-analytic.
\end{lemma}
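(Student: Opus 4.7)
The continuity claim is immediate: each summand $x \mapsto a_i(w_i^Tx+b_i)_+$ is the composition of the continuous affine map $x \mapsto w_i^Tx+b_i$ with the continuous function $z \mapsto a_i\max\{z,0\}$, and a finite sum of continuous functions is continuous. For sub-analyticity, I plan to prove the stronger statement that the graph $G_f \subseteq \R^{d+1}$ is semi-analytic; the definition given in the excerpt then yields sub-analyticity by taking the defining map $L = \id$.

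The function $f$ is piecewise affine with respect to the arrangement of hyperplanes $H_i = \{x : w_i^Tx+b_i=0\}$, $i=1,\dots,m$. For each sign pattern $S\subseteq\{1,\dots,m\}$, put
\[
R_S = \bigcap_{i\in S}\{x : w_i^Tx+b_i > 0\} \cap \bigcap_{i\notin S}\{x : w_i^Tx+b_i \leq 0\}
\]
and $\ell_S(x) = \sum_{i\in S} a_i(w_i^Tx+b_i)$. Then $f|_{R_S} = \ell_S$, and since $\R^d$ is the disjoint union of the $R_S$,
\[
G_f = \bigcup_{S\subseteq\{1,\dots,m\}} \{(x,y) \in R_S \times \R : y = \ell_S(x)\}.
\]
Each piece is described by finitely many strict linear inequalities, the linear equation $y - \ell_S(x) = 0$, and some non-strict linear inequalities. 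The definition of semi-analyticity in the excerpt admits only constraints of the form $\{f_{ij}>0\}$ and $\{g_{ij}=0\}$, so I rewrite each non-strict inequality as $\{w_i^Tx+b_i \leq 0\} = \{-(w_i^Tx+b_i) > 0\} \cup \{w_i^Tx+b_i = 0\}$ and distribute the resulting unions over the intersections. This exhibits each piece, and hence the finite union $G_f$ itself, as a finite union of sets of the form $\bigcap_j\{f_{ij}>0,\, g_{ij}=0\}$ with $f_{ij}$ and $g_{ij}$ affine (hence real analytic) on $\R^{d+1}$. Therefore $G_f$ is semi-analytic, and in particular sub-analytic.

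The main point requiring care is this last bookkeeping step, which converts the non-strict linear inequalities into the normal form required by the definition; there is no conceptual obstacle, since semi-analytic sets are closed under finite unions and intersections and every affine function is real analytic. Once continuity and sub-analyticity have been verified in this way, the Morse-Sard theorem for sub-analytic functions from \cite{bolte2006nonsmooth} quoted above applies directly to $f$.
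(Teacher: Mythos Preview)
Your proof is correct. Both you and the paper establish semi-analyticity of the graph by exploiting the piecewise-affine structure of $f$, so the underlying idea is the same. The organization differs: the paper argues locally, writing out the graph near a generic point (where all $w_i^Tx+b_i\neq 0$) and near a point where exactly one $w_i^Tx+b_i$ vanishes, and then declares the case of several simultaneous vanishings ``similar but tedious''. Your global decomposition by sign patterns $S\subseteq\{1,\dots,m\}$ handles all cases at once and yields a single finite union without any local coordinate changes, so it is somewhat cleaner and actually completes the step the paper leaves to the reader. The only extra work you incur is the bookkeeping of rewriting $\{\le 0\}$ as $\{<0\}\cup\{=0\}$ to match the normal form in the definition, which you have carried out correctly.
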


\begin{proof}
Continuity is clear. Let $(x,y)\in G_f$. Since $f$ is affine linear on the set $\{x\:|\: w_i^Tx+b_i\neq 0 \:\forall\ i\}$, the graph is a plane and thus sub-analytic here. Now assume that $w_1^Tx+b_1 =0$ and $w_i^Tx+b_i\neq 0$ for all $i>1$. Then, locally after a rotation and translation we have
\[
f(x) = \alpha^Tx + \max\{x_1,0\}.
\]
The graph of this function is sub-analytic since 
\[
G_f = \{x_1>0,\:\alpha^Tx + x_1-y =0\} \cup \{x_1< 0, \:\alpha^Tx-y =0\} \cup \big(\{x_1 =0\}\cap\{y=0\}\big).
\]
The case when more terms $w_i^Tx+b_i$ vanish can be treated similarly, but is somewhat tedious to write out. It is, however, crucial that the sum is finite to ensure that there are at most finitely many sets to be united and intersected.
\end{proof}

\subsection{ReLU Geometry on the Sphere}

We can exploit the special geometry of the problem to reduce the dimension slightly. We can write
\[
g(a,w,b) = a\,h(w,b), \qquad h(w,b) = \int_{\R^{d+1}} \rho(x,y)\,\P(\d x\otimes \dy )
\]
where $\rho$ is an $L^p$-weak limit of $(\partial_1\ell)(f_\pi(x),y)$ for any $1<p<\infty$. In particular, $h$ is positively one-homogeneous and a ReLU Barron function \cite[Inverse Approximation Theorem]{weinan2019lei}.

To simplify notation, from now on $w$ has $d+1$ components and we identify $w$ with $(w,b)$ and $x$ with $(x,1)$. Denote by $e_a = (1,0,\dots,0)\in \R^{d+2}$ the unit vector pointing in the $a$-direction. At $(a,w)\in S^{d+1}$, the tangential part of $\partial_a$ is 
\[
e_a^\parallel:= e_a - \langle e_a,(a,w)\rangle (a,w) = e_a - a\begin{pmatrix} a\\ w\end{pmatrix}.
\]
We compute 
\[
\nabla^S_a g(a,w) = \partial_a g - a \,(a,w)\cdot\nabla g = \frac{g}a - 2a\,g = \left(\frac1a - 2a\right)\,g. 
\]
Thus on the set
\[
\left\{(a,w,b): \frac1a -2a \neq 0\right\} = \left\{(a,w,b) : a^2 \neq 2\right\} = \left\{(a,w,b)\in \Theta : a^2 < |w|^2 + b^2\right\}
\]
the only critical value is (possibly) zero. On the other hand, consider $(a,w)$ in $\partial \Theta\cap S^{d+1}$. We note that
\[
g(\lambda a, \mu w) = \lambda\mu\,g(a,w)\qquad\forall\ \lambda,\mu >0
\]
and
\[
(\lambda a)^2 + |\mu w|^2 = \lambda^2 a^2 + \mu^2 |w|^2= 1 \qquad\text{if}\quad \mu^2= \frac{1- \lambda^2a^2}{|w|^2} = 2-\lambda^2.
\]
We find that
\[
g\big(\lambda a,\, \mu(\lambda)\, w\big) = \lambda\sqrt{2-\lambda^2}\,g(a,w)
\]
is extremal at $\lambda = \pm 1$ since
\[
0 = \frac{d}{d\lambda} \lambda\sqrt{2-\lambda^2} = 2\,\frac{1-\lambda^2}{\sqrt{2-\lambda^2}} \qquad\LRa\quad \lambda =\pm 1.
\]
Thus all level sets of $g$ intersect $\partial\Theta\cap S^{d+1}$ and linearity in $a$ is insufficient to establish the Morse-Sard property. The Morse-Sard property \ref{assumption Sard} holds if and only if the Morse-Sard property holds for the function $h: S^d\to \R$. Since $h\in C^{1,1}_{loc}(\overline\Theta)$ by Lemma \ref{lemma extension of phi}, the Morse-Sard property holds if $d=2$ by \cite[Theorem 1]{bates1993toward}.

\subsection{A mild counterexample}

We show that functions with similar structural properties as $g$ (or $h$) may not satisfy a Morse-Sard property in dimension $d\geq 8$.

\begin{definition}
Let $\Omega\subseteq \R^d$ be open and bounded. $f:\Omega\to\R$ is called a Barron function if there exists $\pi \in \mathcal P_2(\R^{d+2})$ such that 
\[
f(x) = f_\pi(x) = \int_{\R^{d+2}} a\,\sigma(w^Tx+b)\,\pi(\d a \otimes \d w \otimes \d b)
\]
in $\Omega$. 
\end{definition}

The space of Barron functions is discussed in detail in \cite{E:2018ab,weinan2019lei,barron_new} and \cite[Appendix A]{approximationarticle}. The space is named after Andrew Barron who first established that a large class of functions could be represented in such a way. We cite a simplified version of Barron's main theorem.

\begin{lemma} \cite[Proposition 1]{barron1993universal}
Assume that $f:\R^d\to\R$ is such that its Fourier transform $\widehat f$ satisfies
\[
\int_{\R^d} |\xi|\,\big|\widehat f\big|(\xi)\,\d\xi<\infty.
\]
Then $f$ is Barron. 
\end{lemma}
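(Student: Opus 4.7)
The plan is to adapt Barron's classical argument so that exactly the first moment of $|\widehat f|$ — and not the second — is what gets spent. Fix $R>0$ with $\Omega\subseteq B_R$ and write $\widehat f(\xi)=|\widehat f(\xi)|\,e^{i\theta(\xi)}$. The starting point is the twice-centered Fourier inversion
\[
f(x)-f(0)-\nabla f(0)\cdot x \;=\; \mathrm{Re}\!\int_{\R^d}\widehat f(\xi)\,\big[e^{i\xi\cdot x}-1-i\xi\cdot x\big]\,\d\xi
\]
combined with the elementary identity $e^{iu}-1-iu=-\int_{\R}e^{ir}\,\bar h(u,r)\,\d r$, where $\bar h(u,r)=(u-r)_+\mathbf 1_{r>0}+(r-u)_+\mathbf 1_{r<0}$. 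Substituting $u=\xi\cdot x$, rescaling $r=|\xi|s$, using positive one-homogeneity of $(\cdot)_+$ and taking real parts leads to
\[
f(x)-f(0)-\nabla f(0)\cdot x \;=\; -\!\int_{\R^d}\!\int_{\R}|\widehat f(\xi)|\,|\xi|\,\cos\!\big(|\xi|s+\theta(\xi)\big)\,\bar h\!\big(w(\xi)\cdot x,\,s\big)\,\d s\,\d\xi
\]
with $w(\xi)=\xi/|\xi|\in S^{d-1}$. On $B_R$ the factor $\bar h(w\cdot x,s)$ vanishes for $|s|>R$, and each remaining contribution is literally of the form $\pm\sigma\!\big(\pm w(\xi)\cdot x\mp s\big)$.

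The decisive bookkeeping step is that the signed amplitude $A(\xi,s):=-|\widehat f(\xi)|\,|\xi|\,\cos(|\xi|s+\theta(\xi))$ is pointwise controlled by $|\xi|\,|\widehat f(\xi)|$, so its total mass is
\[
M\;:=\;\int_{\R^d\times[-R,R]}|A(\xi,s)|\,\d s\,\d\xi\;\leq\;2R\!\int_{\R^d}|\xi|\,|\widehat f(\xi)|\,\d\xi\;<\;\infty
\]
by hypothesis. Normalising by $M$ gives a probability density $|A|/M$ on $\R^d\times[-R,R]$ that I would push forward under $(\xi,s)\mapsto(M\,\mathrm{sign}(A)\,\varepsilon(s),\,\varepsilon(s)\,w(\xi),\,-|s|)$ with $\varepsilon(s)=\mathrm{sign}(s)$, thereby absorbing the two sign choices implicit in $\bar h$ into the signs of $a$ and $w$. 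The resulting parameter measure is supported on the bounded set $\{|a|=M\}\times S^{d-1}\times[-R,R]\subset\R^{d+2}$ and so trivially belongs to $\mathcal P_2(\R^{d+2})$.

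What remains is the affine piece $f(0)+\nabla f(0)\cdot x$, which I would realise as a Barron function by hand: a constant $c$ equals $\phi(c/2,0,2;x)$ with $(c/2,0,2)\in\Theta$ whenever $c^2<16$ (and otherwise $c$ is split into finitely many admissible summands), while each coordinate is represented as $x_i=\phi(1/2,2e_i,0;x)-\phi(1/2,-2e_i,0;x)$ with both triples strictly inside the cone. A convex combination of these probability measures yields a single $\pi\in\mathcal P_2(\R^{d+2})$ representing $f$ on $\Omega$. The one delicate point in the plan is the very first step: the naive Taylor-with-ReLU-remainder identity $g(t)=g(0)+g'(0)t+\int g''(b)(t-b)_+\,\d b$ would produce amplitudes of order $|\xi|^2\,|\widehat f(\xi)|$ and thus demand a second-moment hypothesis, so the single-integration-by-parts form of $e^{iu}-1-iu$ used above is exactly what makes Barron's first-moment hypothesis sharp.
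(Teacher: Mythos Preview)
Your plan contains a genuine error in the ``decisive bookkeeping step'': the displayed amplitude should carry $|\xi|^2$, not $|\xi|$. After the substitution $r=|\xi|s$ you pick up one factor of $|\xi|$ from $\d r=|\xi|\,\d s$, but the positive one-homogeneity of $(\cdot)_+$ that you invoke produces a \emph{second} factor, since $\bar h(\xi\cdot x,\,|\xi|s)=|\xi|\,\bar h(w(\xi)\cdot x,\,s)$. The correct identity is therefore
\[
f(x)-f(0)-\nabla f(0)\cdot x \;=\; -\!\int_{\R^d}\!\int_{\R}|\widehat f(\xi)|\,|\xi|^{2}\,\cos\!\big(|\xi|s+\theta(\xi)\big)\,\bar h\!\big(w(\xi)\cdot x,\,s\big)\,\d s\,\d\xi,
\]
and your mass bound becomes $M\le 2R\int_{\R^d}|\xi|^{2}\,|\widehat f(\xi)|\,\d\xi$, which is exactly the second-moment hypothesis you set out to avoid. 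In other words, your identity $e^{iu}-1-iu=-\int e^{ir}\bar h(u,r)\,\d r$ \emph{is} the naive Taylor-with-ReLU-remainder formula (it encodes two integrations, with $-e^{ir}$ playing the role of $g''$), not a single integration by parts.

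The paper's route is different and sidesteps this obstruction entirely: it simply cites Barron's original sigmoidal result, which needs only $\int|\xi|\,|\widehat f|\,\d\xi<\infty$, and then observes that $\mathrm{ReLU}(t)-\mathrm{ReLU}(t-1)$ is sigmoidal. Thus each sigmoid in Barron's representation is replaced by a difference of two ReLU units with comparable coefficients, and the first-moment bound transfers directly. If you want a self-contained ReLU argument, the fix is to run your construction with $e^{iu}-1$ (one centering, one integration) against step functions $\mathbf 1_{\{w\cdot x>s\}}$---this costs only $|\xi|$---and then realise each step on $B_R$ as a bounded combination of ReLUs via the same difference trick.
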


The original proof uses sigmoidal activation functions, but the result for ReLU activation follows immediately since $\mathrm{ReLU}(x) - \mathrm{ReLU}(x-1)$ is sigmoidal. The criterion is classical. A less well know consequence is the following, derived like \cite[Section IX, point 15] {barron1993universal}.

\begin{corollary}
Assume that $f\in H^s(\R^d)$ for $s> \frac{d}2+1$. Then $f$ is a Barron function.
\end{corollary}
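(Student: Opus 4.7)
The strategy is to reduce to the previous lemma by showing that the Sobolev hypothesis $f\in H^s(\R^d)$ with $s>d/2+1$ implies the integrability condition $\int_{\R^d}|\xi|\,|\widehat f(\xi)|\,\d\xi<\infty$ on the Fourier transform. Once this is established, Barron's criterion (the cited Proposition 1 of \cite{barron1993universal}) gives that $f$ is a Barron function.

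To verify the integrability of $|\xi|\,|\widehat f(\xi)|$, I would multiply and divide by the Bessel weight $(1+|\xi|^2)^{s/2}$ and apply the Cauchy--Schwarz inequality:
\[
\int_{\R^d} |\xi|\,|\widehat f(\xi)|\,\d\xi \leq \left(\int_{\R^d} \frac{|\xi|^2}{(1+|\xi|^2)^s}\,\d\xi\right)^{1/2}\left(\int_{\R^d} (1+|\xi|^2)^s\,|\widehat f(\xi)|^2\,\d\xi\right)^{1/2}.
\]
The second factor is precisely $\|f\|_{H^s(\R^d)}$, which is finite by assumption. For the first factor, the integrand decays like $|\xi|^{2-2s}$ at infinity, so (using polar coordinates) the integral converges provided $2s-2 > d$, i.e.\ $s > d/2 + 1$. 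The integrand is locally bounded near the origin, so there is no issue there.

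Combining these two estimates gives the desired finiteness, and the corollary follows from the preceding lemma. The argument is essentially routine; the only mild subtlety is ensuring the exponent bookkeeping in the tail estimate matches the hypothesis $s>d/2+1$ exactly, which it does.
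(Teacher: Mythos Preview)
Your proposal is correct and is essentially the same argument as the paper's: both multiply and divide by a Sobolev weight, apply Cauchy--Schwarz, identify one factor as the $H^s$-norm, and check that the other factor is finite precisely when $s>d/2+1$. The only cosmetic difference is that the paper uses the weight $(1+|\xi|^{2s})^{1/2}$ rather than your $(1+|\xi|^2)^{s/2}$, which are comparable.
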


\begin{proof}
Using the identity $\widehat{\partial_jf} = i\,\xi_j\widehat f$ and Parseval's identity, we compute
\begin{align*}
\int_{\R^d} |\xi|\,\big|\widehat f\big|(\xi)\,\d\xi &= \int_{\R^d} |\xi|\,\big|\widehat f\big|(\xi)\,\left(1+ |\xi|^{2s}\right)^\frac12 \left(1+ |\xi|^{2s}\right)^{-\frac12}\,\d\xi\\
	&\leq \left(\int_{\R^d} |\widehat f|^2\,\left(1+ |\xi|^{2s}\right)\d\xi\right)^\frac12 \left(\int_{\R^d} \frac{|\xi|^2}{1+|\xi|^{2s}}\d\xi\right)^\frac12
\end{align*}
By \cite[Satz 9.37]{dobrowolski2010angewandte}, the first factor on the right is finite if and only if $f\in H^s(\R^d)$. The second factor is finite if $2s-2 > d$.
\end{proof}

In particular, if $f$ is $C^k$-smooth on a ball $B_r(0)$ for $k> \frac{d}2 +1$, then we can use standard extension theorems to show that $f$ is Barron. In particular, if $d\geq 7$ then $\frac d2+2 < d-1$ and e.g.\ every $C^5$-function on $B_1(0)\subset \R^7$ is Barron. In particular, we have shown the following.

\begin{corollary}
In dimension $d\geq 7$ there exist Barron functions which do not have the Morse-Sard property.
\end{corollary}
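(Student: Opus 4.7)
The plan is to combine a Whitney--Hajlasz type counterexample to the classical Morse--Sard theorem with the Sobolev embedding criterion for Barron functions established in the preceding Corollary. The general strategy is to locate a function whose regularity is just below what Morse--Sard requires, but still comfortably above what the Barron criterion demands in dimension $d\geq 7$.

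First, I would invoke the Hajlasz generalization \cite{hajlasz2003whitney} of Whitney's construction to produce a function $f_0\in C^{d-1}(\R^d)$ which is non-constant on a connected component $\Gamma$ of its critical set $\{\nabla f_0=0\}$. Such an $f_0$ immediately violates the Morse--Sard property, since the image $f_0(\Gamma)$ is a nondegenerate interval of critical values (in particular, a set of positive Lebesgue measure). Without loss of generality I may localize the construction so that $\Gamma$ is compact and contained in some ball $B_R(0)$.

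Second, I would multiply $f_0$ by a compactly supported cutoff $\chi\in C_c^\infty(\R^d)$ chosen to equal $1$ on an open neighborhood of $\Gamma$. The resulting function $f:=\chi f_0$ lies in $C_c^{d-1}(\R^d)\subset H^{d-1}(\R^d)$, and agrees with $f_0$ together with its first derivatives throughout a full neighborhood of $\Gamma$. Consequently every point of $\Gamma$ remains a critical point of $f$, and $f$ is still non-constant on $\Gamma$, so the failure of Morse--Sard is inherited by $f$. The final step is a direct appeal to the Corollary preceding the statement: the assumption $d\geq 7$ yields $d-1>\tfrac{d}{2}+1$, so $f\in H^{d-1}(\R^d)$ is automatically Barron, and we are done.

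The only step that requires genuine care is the cutoff: one must arrange for $\chi\equiv 1$ on a true neighborhood of the compact arc $\Gamma$ rather than merely on $\Gamma$ itself, so that $f$ and $f_0$ share both their values and their gradients in a neighborhood of $\Gamma$ and the critical values of $f_0$ on $\Gamma$ are preserved verbatim. Because Hajlasz's construction can be localized inside a ball, $\Gamma$ is compact and such a $\chi$ is elementary to produce. No delicate density or approximation argument is needed, since the Sobolev criterion asks only for integer smoothness $s>d/2+1$, which the compactly supported $C^{d-1}$ function $f$ satisfies with room to spare whenever $d\geq 7$.
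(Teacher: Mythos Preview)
Your proposal is correct and follows essentially the same approach as the paper: invoke the Whitney--Haj\l asz $C^{d-1}$ counterexamples to Morse--Sard, localize by a smooth cutoff to land in $H^{d-1}(\R^d)$, and then apply the preceding Sobolev criterion $s>\tfrac d2+1$. The paper is terser (it absorbs your cutoff step into ``standard extension theorems''), but the logic is identical; your inequality $d-1>\tfrac d2+1$ in fact already holds for $d\ge 5$, so the paper's condition $\tfrac d2+2<d-1$ (hence $d\ge 7$) is slightly more conservative than strictly necessary.
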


Denote by $S^d$ the unit sphere in $\R^{d+1}$. The function 
\[
h:S^d\to \R, \qquad h(w,b) = \int \rho(x,y)\,\P(\d x \otimes \d y)
\]
from the previous section shares many properties with general Barron functions. In a fixed application, the data distribution $\P$ is fixed, so a Sard property would need to be established in a random feature space (for bounded density $\rho$). Across different applications, the data distribution $\P$ may vary within the class satisfying assumptions \ref{assumption P Borel}, \ref{assumption P finite moments}, \ref{assumption density in L1}, \ref{assumption RL1}. While this class does not exhaust the entire Barron space, we note that \ref{assumption RL1} only implies $C^{1,1}$-smoothness and no sufficient regularity to invoke a classical Sard theorem. We thus expect that there are cases of interest in which the Morse-Sard condition \ref{assumption Sard} does not hold.


\begin{thebibliography}{EMWW19}

\bibitem[AB94]{aliprantisinfinite}
C.~D. Aliprantis and K.~Border.
\newblock {\em Infinite Dimensional Analysis: A Hitchhiker's Guide}.
\newblock Springer-Verlag, 1994.

\bibitem[ABM14]{attouch2014variational}
H.~Attouch, G.~Buttazzo, and G.~Michaille.
\newblock {\em Variational analysis in Sobolev and BV spaces: applications to
  PDEs and optimization}.
\newblock SIAM, 2014.

\bibitem[ADH{\etalchar{+}}19]{arora2019exact}
S.~Arora, S.~S. Du, W.~Hu, Z.~Li, R.~R. Salakhutdinov, and R.~Wang.
\newblock On exact computation with an infinitely wide neural net.
\newblock In {\em Advances in Neural Information Processing Systems}, pages
  8139--8148, 2019.

\bibitem[AGS08]{ambrosio2008gradient}
L.~Ambrosio, N.~Gigli, and G.~Savar{\'e}.
\newblock {\em Gradient flows: in metric spaces and in the space of probability
  measures}.
\newblock Springer Science \& Business Media, 2008.

\bibitem[AKSG19]{arbel2019maximum}
M.~Arbel, A.~Korba, A.~Salim, and A.~Gretton.
\newblock Maximum mean discrepancy gradient flow.
\newblock In {\em Advances in Neural Information Processing Systems}, pages
  6481--6491, 2019.

\bibitem[Amb08]{ambrosio2008transport}
L.~Ambrosio.
\newblock Transport equation and {C}auchy problem for non-smooth vector fields.
\newblock In {\em Calculus of variations and nonlinear partial differential
  equations}, pages 1--41. Springer, 2008.

\bibitem[AOY19]{araujo2019mean}
D.~Ara{\'u}jo, R.~I. Oliveira, and D.~Yukimura.
\newblock A mean-field limit for certain deep neural networks.
\newblock {\em arXiv:1906.00193 [math.ST]}, 2019.

\bibitem[Bar93]{barron1993universal}
A.~R. Barron.
\newblock Universal approximation bounds for superpositions of a sigmoidal
  function.
\newblock {\em IEEE Transactions on Information theory}, 39(3):930--945, 1993.

\bibitem[Bat93]{bates1993toward}
S.~M. Bates.
\newblock Toward a precise smoothness hypothesis in {S}ard's theorem.
\newblock {\em Proceedings of the American Mathematical Society}, pages
  279--283, 1993.

\bibitem[BDDR16]{barbet2016sard}
L.~Barbet, M.~Dambrine, A.~Daniilidis, and L.~Rifford.
\newblock {S}ard theorems for {L}ipschitz functions and applications in
  optimization.
\newblock {\em Israel Journal of Mathematics}, 212(2):757--790, 2016.

\bibitem[BDL06]{bolte2006nonsmooth}
J.~Bolte, A.~Daniilidis, and A.~Lewis.
\newblock A nonsmooth {M}orse-{S}ard theorem for subanalytic functions.
\newblock {\em Journal of mathematical analysis and applications},
  321(2):729--740, 2006.

\bibitem[BJ18]{berlyand2018convergence}
L.~Berlyand and P.-E. Jabin.
\newblock On the convergence of formally diverging neural net-based
  classifiers.
\newblock {\em Comptes Rendus Mathematique}, 356(4):395--405, 2018.

\bibitem[BKK13]{bourgain2010morse}
J.~Bourgain, M.~V. Korobkov, and J.~Kristensen.
\newblock On the {M}orse-{S}ard property and level sets of {S}obolev and {BV}
  functions.
\newblock {\em Rev.Mat.Iberoam}, 29(1):1 -- 23, 2013.

\bibitem[BKK15]{bourgain2015morse}
J.~Bourgain, M.~V. Korobkov, and J.~Kristensen.
\newblock On the {M}orse--{S}ard property and level sets of
  ${W}^{n,1}$-{S}obolev functions on $\mathbb{R}^n$.
\newblock {\em Journal f{\"u}r die reine und angewandte Mathematik (Crelles
  Journal)}, 2015(700):93--112, 2015.

\bibitem[BM19]{bietti2019inductive}
A.~Bietti and J.~Mairal.
\newblock On the inductive bias of neural tangent kernels.
\newblock In {\em Advances in Neural Information Processing Systems}, pages
  12873--12884, 2019.

\bibitem[CB18a]{chizat2018note}
L.~Chizat and F.~Bach.
\newblock A note on lazy training in supervised differentiable programming.
\newblock {\em arXiv:1812.07956 [math.OC]}, 2018.

\bibitem[CB18b]{chizat2018global}
L.~Chizat and F.~Bach.
\newblock On the global convergence of gradient descent for over-parameterized
  models using optimal transport.
\newblock In {\em Advances in neural information processing systems}, pages
  3036--3046, 2018.

\bibitem[CB20]{Chizat:2020aa}
L.~Chizat and F.~Bach.
\newblock Implicit bias of gradient descent for wide two-layer neural networks
  trained with the logistic loss.
\newblock {\em arxiv:2002.04486 [math.OC]}, 2020.

\bibitem[Cyb89]{cybenko1989approximation}
G.~Cybenko.
\newblock Approximation by superpositions of a sigmoidal function.
\newblock {\em Mathematics of control, signals and systems}, 2(4):303--314,
  1989.

\bibitem[DKW19]{dondl2019effect}
P.~W. Dondl, M.~W. Kurzke, and S.~Wojtowytsch.
\newblock The effect of forest dislocations on the evolution of a phase-field
  model for plastic slip.
\newblock {\em Archive for Rational Mechanics and Analysis}, 232(1):65--119,
  2019.

\bibitem[DLL{\etalchar{+}}18]{du2018bgradient}
S.~S. Du, J.~D. Lee, H.~Li, L.~Wang, and X.~Zhai.
\newblock Gradient descent finds global minima of deep neural networks.
\newblock {\em arXiv:1811.03804 [cs.LG]}, 2018.

\bibitem[Dob10]{dobrowolski2010angewandte}
M.~Dobrowolski.
\newblock {\em Angewandte Funktionalanalysis: Funktionalanalysis,
  Sobolev-R{\"a}ume und elliptische Differentialgleichungen}.
\newblock Springer-Verlag, 2010.

\bibitem[dP01]{de2001morse}
L.~de~Pascale.
\newblock The {M}orse-{S}ard theorem in {S}obolev spaces.
\newblock {\em Indiana University mathematics journal}, pages 1371--1386, 2001.

\bibitem[DZPS18]{du2018gradient}
S.~S. Du, X.~Zhai, B.~Poczos, and A.~Singh.
\newblock Gradient descent provably optimizes over-parameterized neural
  networks.
\newblock {\em arXiv:1810.02054 [cs.LG]}, 2018.

\bibitem[EG15]{evans2015measure}
L.~C. Evans and R.~F. Gariepy.
\newblock {\em Measure theory and fine properties of functions}.
\newblock CRC press, 2015.

\bibitem[EM68]{eells1968approximate}
J.~Eells and J.~McAlpin.
\newblock An approximate {M}orse-{S}ard theorem.
\newblock {\em Journal of Mathematics and Mechanics}, 17(11):1055--1064, 1968.

\bibitem[EMW18]{E:2018ab}
W.~E, C.~Ma, and L.~Wu.
\newblock A priori estimates of the population risk for two-layer neural
  networks.
\newblock {\em Comm. Math. Sci.}, 17(5):1407 -- 1425 (2019), arxiv:1810.06397
  [cs.LG] (2018).

\bibitem[EMW19a]{weinan2019priori}
W.~E, C.~Ma, and Q.~Wang.
\newblock A priori estimates of the population risk for residual networks.
\newblock {\em arXiv:1903.02154 [cs.LG]}, 2019.

\bibitem[EMW19b]{weinan2019lei}
W.~E, C.~Ma, and L.~Wu.
\newblock Barron spaces and the compositional function spaces for neural
  network models.
\newblock {\em arXiv:1906.08039 [cs.LG]}, 2019.

\bibitem[EMW19c]{E:2019aa}
W.~E, C.~Ma, and L.~Wu.
\newblock Machine learning from a continuous viewpoint.
\newblock {\em arxiv:1912.12777 [math.NA]}, 2019.

\bibitem[EMW19d]{weinan2019comparative}
W.~E, C.~Ma, and L.~Wu.
\newblock A comparative analysis of optimization and generalization properties
  of two-layer neural network and random feature models under gradient descent
  dynamics.
\newblock {\em Sci. China Math.}, https://doi.org/10.1007/s11425-019-1628-5,
  arXiv:1904.04326 [cs.LG] (2019).

\bibitem[EMWW19]{weinan2019analysis}
W.~E, C.~Ma, Q.~Wang, and L.~Wu.
\newblock Analysis of the gradient descent algorithm for a deep neural network
  model with skip-connections.
\newblock {\em arXiv:1904.05263 [cs.LG]}, 2019.

\bibitem[EW20a]{barron_new}
W.~E and S.~Wojtowytsch.
\newblock Barron functions and their representation.
\newblock {\em In preparation}, 2020.

\bibitem[EW20b]{approximationarticle}
W.~E and S.~Wojtowytsch.
\newblock Kolmogorov width decay and poor approximators in machine learning:
  Shallow neural networks, random feature models and neural tangent kernels.
\newblock {\em arXiv:2005.10807 [math.FA]}, 2020.

\bibitem[Fed69]{federer66geometric}
H.~Federer.
\newblock {\em Geometric Measure Theory}.
\newblock Springer, Berlin, 1969.

\bibitem[Fig08]{figalli2008simple}
A.~Figalli.
\newblock A simple proof of the {M}orse-{S}ard theorem in {S}obolev spaces.
\newblock {\em Proceedings of the American Mathematical Society},
  136(10):3675--3681, 2008.

\bibitem[FL07]{fonseca2007modern}
I.~Fonseca and G.~Leoni.
\newblock {\em Modern Methods in the Calculus of Variations: ${L}^p$ Spaces}.
\newblock Springer Science \& Business Media, 2007.

\bibitem[GBC16]{goodfellow2016deep}
I.~Goodfellow, Y.~Bengio, and A.~Courville.
\newblock {\em Deep learning}.
\newblock MIT press, 2016.

\bibitem[Haj03]{hajlasz2003whitney}
P.~Haj{\l}asz.
\newblock Whitney's example by way of {A}ssouad's embedding.
\newblock {\em Proceedings of the American Mathematical Society},
  131(11):3463--3467, 2003.

\bibitem[HH19]{higham2019deep}
C.~F. Higham and D.~J. Higham.
\newblock Deep learning: An introduction for applied mathematicians.
\newblock {\em SIAM Review}, 61(4):860--891, 2019.

\bibitem[HLLL19]{hu2019diffusion}
W.~Hu, C.~J. Li, L.~Li, and J.-G. Liu.
\newblock On the diffusion approximation of nonconvex stochastic gradient
  descent.
\newblock {\em Annals of Mathematical Sciences and Applications}, 4(1):3--32,
  2019.

\bibitem[Hor91]{hornik1991approximation}
K.~Hornik.
\newblock Approximation capabilities of multilayer feedforward networks.
\newblock {\em Neural networks}, 4(2):251--257, 1991.

\bibitem[HRSS19]{hu2019mean}
K.~Hu, Z.~Ren, D.~Siska, and L.~Szpruch.
\newblock Mean-field {L}angevin dynamics and energy landscape of neural
  networks.
\newblock {\em arXiv:1905.07769 [math.PR]}, 2019.

\bibitem[JGH18]{jacot2018neural}
A.~Jacot, F.~Gabriel, and C.~Hongler.
\newblock Neural tangent kernel: Convergence and generalization in neural
  networks.
\newblock In {\em Advances in neural information processing systems}, pages
  8571--8580, 2018.

\bibitem[JKO98]{jordan1998variational}
R.~Jordan, D.~Kinderlehrer, and F.~Otto.
\newblock The variational formulation of the {F}okker--{P}lanck equation.
\newblock {\em SIAM journal on mathematical analysis}, 29(1):1--17, 1998.

\bibitem[KK18]{korobkov2018trace}
M.~V. Korobkov and J.~Kristensen.
\newblock The trace theorem, the {L}uzin {N}-and {M}orse-{S}ard properties for
  the sharp case of {S}obolev-{L}orentz mappings.
\newblock {\em The Journal of Geometric Analysis}, 28(3):2834--2856, 2018.

\bibitem[LLPS93]{leshno1993multilayer}
M.~Leshno, V.~Y. Lin, A.~Pinkus, and S.~Schocken.
\newblock Multilayer feedforward networks with a nonpolynomial activation
  function can approximate any function.
\newblock {\em Neural networks}, 6(6):861--867, 1993.

\bibitem[LTE15]{li2015dynamics}
Q.~Li, C.~Tai, and W.~E.
\newblock Dynamics of stochastic gradient algorithms.
\newblock {\em arXiv:1511.06251 [cs.LG]}, 2015.

\bibitem[MBW{\etalchar{+}}19]{mehta2019high}
P.~Mehta, M.~Bukov, C.-H. Wang, A.~G. Day, C.~Richardson, C.~K. Fisher, and
  D.~J. Schwab.
\newblock A high-bias, low-variance introduction to machine learning for
  physicists.
\newblock {\em Physics reports}, 2019.

\bibitem[McC97]{mccann1997convexity}
R.~J. McCann.
\newblock A convexity principle for interacting gases.
\newblock {\em Advances in mathematics}, 128(1):153--179, 1997.

\bibitem[Mie12]{mielke2012emergence}
A.~Mielke.
\newblock Emergence of rate-independent dissipation from viscous systems with
  wiggly energies.
\newblock {\em Continuum Mechanics and Thermodynamics}, 24(4-6):591--606, 2012.

\bibitem[MMN18]{mei2018mean}
S.~Mei, A.~Montanari, and P.-M. Nguyen.
\newblock A mean field view of the landscape of two-layer neural networks.
\newblock {\em Proceedings of the National Academy of Sciences},
  115(33):E7665--E7671, 2018.

\bibitem[Mor39]{morse1939behavior}
A.~P. Morse.
\newblock The behavior of a function on its critical set.
\newblock {\em Annals of Mathematics}, pages 62--70, 1939.

\bibitem[Ngu19]{nguyen2019mean}
P.-M. Nguyen.
\newblock Mean field limit of the learning dynamics of multilayer neural
  networks.
\newblock {\em arXiv:1902.02880 [cs.LG]}, 2019.

\bibitem[NP20]{nguyen2020rigorous}
P.-M. Nguyen and H.~T. Pham.
\newblock A rigorous framework for the mean field limit of multilayer neural
  networks.
\newblock {\em arXiv:2001.11443 [cs.LG]}, 2020.

\bibitem[Ott01]{otto2001geometry}
F.~Otto.
\newblock The geometry of dissipative evolution equations: the porous medium
  equation.
\newblock {\em Comm. Partial Differential Equations}, 26, 2001.

\bibitem[PZ06]{pavlica2006morse}
D.~Pavlica and L.~Zaj{\'\i}{\v{c}}ek.
\newblock Morse-sard theorem for dc functions and mappings on $\mathbb{R}^2$.
\newblock {\em Indiana University mathematics journal}, pages 1195--1207, 2006.

\bibitem[Rif04]{rifford2004morse}
L.~Rifford.
\newblock A {M}orse-{S}ard theorem for the distance function on {R}iemannian
  manifolds.
\newblock {\em manuscripta mathematica}, 113(2):251--265, 2004.

\bibitem[RVE18]{rotskoff2018neural}
G.~M. Rotskoff and E.~Vanden-Eijnden.
\newblock Neural networks as interacting particle systems: Asymptotic convexity
  of the loss landscape and universal scaling of the approximation error.
\newblock {\em arXiv:1805.00915 [stat.ML]}, 2018.

\bibitem[San15]{santambrogio2015optimal}
F.~Santambrogio.
\newblock Optimal transport for applied mathematicians.
\newblock {\em Birkh{\"a}user, NY}, 55(58-63):94, 2015.

\bibitem[Sar42]{sard1942measure}
A.~Sard.
\newblock The measure of the critical values of differentiable maps.
\newblock {\em Bulletin of the American Mathematical Society}, 48(12):883--890,
  1942.

\bibitem[Ser11]{serfaty2011gamma}
S.~Serfaty.
\newblock Gamma-convergence of gradient flows on {H}ilbert and metric spaces
  and applications.
\newblock {\em Discrete Contin. Dyn. Syst}, 31(4):1427--1451, 2011.

\bibitem[SS04]{sandier2004gamma}
E.~Sandier and S.~Serfaty.
\newblock Gamma-convergence of gradient flows with applications to
  {G}inzburg-{L}andau.
\newblock {\em Communications on Pure and Applied Mathematics: A Journal Issued
  by the Courant Institute of Mathematical Sciences}, 57(12):1627--1672, 2004.

\bibitem[SS19]{sirignano2019mean}
J.~Sirignano and K.~Spiliopoulos.
\newblock Mean field analysis of deep neural networks.
\newblock {\em arXiv:1903.04440 [math.PR]}, 2019.

\bibitem[SS20]{sirignano2018mean}
J.~Sirignano and K.~Spiliopoulos.
\newblock Mean field analysis of neural networks: A law of large numbers.
\newblock {\em SIAM J. Appl. Math}, 80(2):725--752, 2020.

\bibitem[SSBD14]{shalev2014understanding}
S.~Shalev-Shwartz and S.~Ben-David.
\newblock {\em Understanding machine learning: From theory to algorithms}.
\newblock Cambridge university press, 2014.

\bibitem[Ver18]{vershynin2018high}
R.~Vershynin.
\newblock {\em High-dimensional probability: An introduction with applications
  in data science}, volume~47.
\newblock Cambridge university press, 2018.

\bibitem[Vil08]{villani2008optimal}
C.~Villani.
\newblock {\em Optimal transport: old and new}, volume 338.
\newblock Springer Science \& Business Media, 2008.

\bibitem[WE20]{dynamic_cod}
S.~Wojtowytsch and W.~E.
\newblock Can shallow neural networks beat the curse of dimensionality? {A}
  mean field training perspective.
\newblock {\em arXiv:2005.10815 [cs.LG]}, 2020.

\bibitem[Whi35]{whitney1935function}
H.~Whitney.
\newblock A function not constant on a connected set of critical points.
\newblock {\em Duke Mathematical Journal}, 1(4):514--517, 1935.

\end{thebibliography}

\newcommand{\etalchar}[1]{$^{#1}$}

\end{document}